\documentclass{scrartcl}
\usepackage{amsmath}
\usepackage{amsfonts}
\usepackage{amssymb}
\usepackage{dsfont}
\usepackage[T1]{fontenc}
\usepackage[latin1]{inputenc}
\usepackage{epsfig}
\usepackage{graphicx}

\usepackage[boxed, lined]{algorithm2e}
\usepackage{float}
\usepackage{comment}
\usepackage{mathtools}

\usepackage{enumitem}

\usepackage{bm}

\newcommand{\subalign}[1]{%
   \vcenter{%
     \Let@ \restore@math@cr \default@tag
     \baselineskip\fontdimen10 \scriptfont\tw@
     \advance\baselineskip\fontdimen12 \scriptfont\tw@
     \lineskip\thr@@\fontdimen8 \scriptfont\thr@@
     \lineskiplimit\lineskip
     \ialign{\hfil$\m@th\scriptstyle##$&$\m@th\scriptstyle{}##$\hfil\crcr
       #1\crcr
     }%
   }%
}

\newcommand{\ba}{\mathbf{a}}
\newcommand{\bb}{\mathbf{b}}

\newcommand{\bx}{x}

\newcommand{\bz}{\mathbf{z}}
\newcommand{\bw}{\mathbf{w}}

\newcommand{\bH}{\mathbf{H}}

\newcommand{\D}{{\mathcal{D}}}
\newcommand{\A}{{\mathcal{A}}}
\newcommand{\B}{{\mathcal{B}}}
\newcommand{\W}{{\mathcal{W}}}
\newcommand{\bA}{{\mathbf{A}}}

\newcommand{\Nu}{{\mathcal{N}}}

\newcommand{\M}{{\mathcal{M}}}
\newcommand{\C}{{\mathcal{C}}}

\newcommand{\N}{\mathbb{N}}

\newcommand{\R}{\mathbb{R}}

\newcommand{\Rd}{\mathbb{R}^d}

\newcommand{\beq}{\begin{eqnarray*}}

\newcommand{\eeq}{\end{eqnarray*}}

\newcommand{\beqm}{\begin{eqnarray}}

\newcommand{\eeqm}{\end{eqnarray}}

\newtheorem{theorem}{Theorem}

\newtheorem{lemma}{Lemma}
\newtheorem{definition}{Definition}

\DeclareOldFontCommand{\bf}{\normalfont\bfseries}{\mathbf}
\DeclareOldFontCommand{\it}{\normalfont\itshape}{\mathit}

\newcommand{\EXP}{{\mathbf E}}
\newcommand{\PROB}{{\mathbf P}}

\renewcommand{\P}{{\cal P}}
\renewcommand{\bf}{\normalfont \bfseries}
\renewcommand{\it}{\normalfont \itshape}

\allowdisplaybreaks

\DeclareMathOperator*{\esssup}{ess\,sup}
\DeclareMathOperator*{\supp}{supp}

\begin{document}
\renewcommand{\thefootnote}{\fnsymbol{footnote}}
\newcommand{\F}{{\cal F}}
\newcommand{\Sp}{{\cal S}}
\newcommand{\G}{{\cal G}}
\newcommand{\HH}{{\cal H}}

\newcounter{constcounter}

\makeatletter
\newcommand{\const}[1][\relax]{%
  \ifmeasuring@
    c_{?}%
  \else
    \ifx#1\relax
      \stepcounter{constcounter}%
      c_{\theconstcounter}%
    \else
      \@ifundefined{const@#1}{%
        \stepcounter{constcounter}%
        \expandafter\xdef\csname const@#1\endcsname{\theconstcounter}%
        c_{\csname const@#1\endcsname}%
      }{%
        c_{\csname const@#1\endcsname}%
      }%
    \fi
  \fi
}
\makeatother


\begin{center}

  {\LARGE \bf
    Estimation of a regression function from dependent data
    by over-parametrized deep neural networks learned by gradient descent
  }
\footnote{
  Running title: {\it Estimation of a regression function from
    dependent data}}
\vspace{0.5cm}

Michael Kohler$^{1}$, 
Adam Krzy\.zak$^{2,}$\footnote{Corresponding author. Tel:
  +1-514-848-2424 ext. 3007, Fax:+1-514-848-2830}
and Vincent Molinero R\"omer$^{1}$
\\

{\it $^1$
Fachbereich Mathematik, Technische Universit\"at Darmstadt,
Schlossgartenstr. 7, 64289 Darmstadt, Germany,
email: kohler@mathematik.tu-darmstadt.de,
roemer@mathematik.tu-darmstadt.de}

{\it $^2$ Department of Computer Science and Software Engineering, Concordia University, 1455 De Maisonneuve Blvd. West, Montreal, Quebec, Canada H3G 1M8, email: krzyzak@cs.concordia.ca}

\end{center}
\vspace{0.5cm}

\begin{center}
April 1, 2026
\end{center}
\vspace{0.5cm}

\noindent
    {\bf Abstract}\\
    Estimation of a regression function from exponentially $\beta$-mixing data
is considered. The $L_2$ error with integration with
    respect to the design is used as the error criterion.
    Deep neural network estimates with logistic activation function
 are defined, where all parameters are learned by gradient descent. 
 The rate of convergence of the expected $L_2$ error is analyzed for $(p,C)$-smooth  regression functions.
In the special case that the design is concentrated
    on a $d^*$-dimensional manifold, it is shown that the expected $L_2$ error
    of the estimate achieves a rate of convergence which depends on $d^*$
    and not on the dimension $d$ of the design.
    
    \vspace*{0.2cm}

\noindent{\it AMS classification:} Primary 62G08; secondary 62G20.

\vspace*{0.2cm}

\noindent{\it Key words and phrases:}
Deep neural networks,
dependent data,
dimension reduction,
gradient descent,
over-parametrization,
rate of convergence,
regression estimation.

\section{Introduction}
\label{se1}
Deep neural networks are nowadays among the most successful statistical methods 
applied in practice, e.g. in image classification 
(cf., e.g., Krizhevsky, Sutskever and Hinton  (2012)),
in language recognition (cf., e.g.,  Kim (2014))
in machine translation (cf., e.g., Wu et al. (2016))
in game playing (cf., e.g., Silver et al. (2017))
or in simulation of human conversation (cf., e.g., Minaee et al. (2025)).
Motivated by these successes in applications, deep learning was
also intensively studied theoretically in the past ten years.
Often this is done in the context of nonparametric regression, where
a sample
\begin{equation}
\label{se1eq1}
\D_n = \{ (X_1,Y_1), \dots, (X_n,Y_n) \}
\end{equation}
of a $\R^d \times \R$--valued random vector $(X,Y)$ satisfying
$\EXP\{Y^2\} < \infty$ is given, and the task is to construct an
estimate
\[
m_n(\cdot)=m_n(\cdot, \D_n): \R^d \rightarrow \R
\]
of the corresponding regression function $m: \R^d \rightarrow \R$,
$m(x)=\EXP\{Y|X=x\}$
such that the so--called $L_2$ error 
\[
\int |m_n(x)-m(x)|^2 \PROB_X (dx)
\]
is small (see Chapter 1 in Gy\"orfi et al. (2002) for a motivation
for using the $L_2$ error as the error criterion in nonparametric
regression).

It is well-known that the rate of convergence of the $L_2$ error might
be arbitrarily slow in case that one does not impose smoothness
assumptions on the regression function (cf., e.g., Chapter 3 in
Gy\"orfi et al. (2002)). In the sequel we will assume that the
regression function is $(p,C)$--smooth according to the following definition.
\begin{definition}
\label{intde1} 
  Let $p=q+s$ for some $q \in \N_0$ and $0< s \leq 1$.
A function $m:\R^d \rightarrow \R$ is called
{\em $(p,C)$-smooth}, if for every $\bm{\alpha}=(\alpha_1, \dots, 
\alpha_d) \in \N_0^d$
with $\sum_{j=1}^d \alpha_j = q$ the partial derivative
$\partial^q m/(\partial x_1^{\alpha_1}
\dots
\partial x_d^{\alpha_d}
)$
exists and satisfies
\[
\left|
\frac{
\partial^q m
}{
\partial x_1^{\alpha_1}
\cdots
\partial x_d^{\alpha_d}
}
(\bx)
-
\frac{
\partial^q m
}{
\partial x_1^{\alpha_1}
\cdots
\partial x_d^{\alpha_d}
}
(\bz)
\right|
\leq
C \cdot
\| \bx-\bz \|^s
\]
for all $\bx,\bz \in \R^d$, where $\Vert\cdot\Vert$ denotes the Euclidean norm.
\end{definition}
Stone (1982) showed that the optimal minimax rate of convergence
for estimation of $(p,C)$--smooth regression functions in the case of
i.i.d. data is
\[
n^{-\frac{2p}{2p+d}}.
\]
This rate suffers from the so--called ''curse of dimensionality'': it becomes slow when 
$d$ is large relative to $p$.

If the data $\D_n$ is an i.i.d. sample
independent of $(X,Y)$, then least squares estimates based on deep
neural networks can achieve very fast rate of convergence results even for
large $d$. The reason behind this is the structure of the deep neural networks which
enables us to use approximation results for smooth functions by neural networks
(cf., e.g., Yarotsky (2018)) to approximate the composition of smooth functions 
by deep neural networks. Here dimension reduction occurs in the
sense that the rate of convergence 
of the corresponding least squares regression estimates
does not depend on the dimension
$d$ of $X$, but on the maximum number of variables in the individual
functions to be composed, and therefore the estimates achieve good 
rate of convergence results under such a hierarchical composition
model for the regression function even for very large $d$. The first paper
showing that deep neural networks can circumvent the
'curse of dimensionality' in the above setting was Kohler and
Krzy\.zak (2017), and later on this was extended by many additional
results (see, e.g., Bauer and Kohler (2019), Schmidt-Hieber (2020) and Kohler
and Langer (2021)). Another setting where it was shown that deep
neural networks are able to circumvent the curse of dimensionality
is manifold case, where it is assumed that $X$ takes on values
on some $d^*$ dimensional manifold defined as follows:
\begin{definition}
  \label{intde2} 
  Let $\M \subseteq \Rd$ be compact and let $d^* \in \{1, \dots, d\}$.

  \noindent
      {\bf a)} We say that $U_1,\dots,U_r$ is
      an {\em open covering  of $\M$}, if $U_1,\dots,U_r \subset \Rd$
      are open (with respect to the Euclidean topology on $\Rd$)
      and satisfy
      \[
\M \subseteq \bigcup_{l=1}^r U_l.
\]

  \noindent
      {\bf b)} We say that
\[
\psi_1, \dots, \psi_r: [0,1]^{d^*}\rightarrow \Rd
\]
are {\em bi-Lipschitz functions}, if there exists $0 < C_{\psi,1} 
\leq C_{\psi,2}
< \infty$ such that
\begin{equation}
\label{de2eq1}
C_{\psi,1}  \cdot \|\bx_1-\bx_2\|
\leq
\| \psi_l(\bx_1)-\psi_l(\bx_2) \|
\leq
C_{\psi,2}  \cdot \|\bx_1-\bx_2\|
\end{equation}
holds for any $\bx_1,\bx_2 \in  [0,1]^{d^*}$ and any
$l \in \{1, \dots, r\}$.

        \noindent
            {\bf c)}
            We 
say that $\M$ is a {\em $d^*$-dimensional Lipschitz-manifold} if 
there exist {\em bi-Lipschitz functions} $\psi_i : [0,1]^{d^*} \to \R^d$ $(i \in \{
  1,\dots,r \})$, 
and an open covering $U_1,\dots,U_r$ of $\M$ such that 
\[
\psi_l\left( (0,1)^{d^*} \right) = \M \cap U_l
\]
holds for all $l \in \{1, \dots, r\}$.
Here we call $\psi_1, \dots, \psi_r$ the {\em parametrizations}
of the manifold.
\end{definition}
In this case Kohler, Langer and Reif (2023)
and Jiao et al. (2023) showed that deep neural networks are able to achieve a rate
of convergence which depends on $d^*$ rather than on $d$.

In many applications, it is not possible to observe an i.i.d. sample
of $(X,Y)$. In the past few years several results were shown which
demonstrate that the above results also holds for dependent data
sets $\D_n$ which are exponentially $\beta$-mixing according to the following
definition.

\begin{definition}
	\label{intde3}\phantom{P}
	{\bf a)}
	Let $N,K\in\N$, let
	$X:\Omega\to\R^N$ and
	$Y:\Omega\to\R^K$
	be Borel measurable random variables defined on the same
        probability space.
	The $\beta$-mixing coefficient between $X$ and $Y$ is defined by
	\[
	\beta(X,Y)
	=\EXP\{\esssup_{C\in \sigma(Y)} |\PROB(C|X)-\PROB(C)|\}
	=\EXP\{\esssup_{A\in \B_K} |\PROB(Y\in A|X)-\PROB(Y\in A)|\},
	\]
	where $\sigma(Y)$ is the $\sigma$-field generated by $Y$.

	\noindent
	{\bf b)}
	A sequence of random variables $X_1, X_2, \dots$ is called $\beta$-mixing if
	$$
	{\beta}_s(X_1, X_2, \dots)=\sup_{k\in\N} {\beta}((X_1,\dots, X_k), X_{k+s})
	\to 0 \hspace{0.5em}(s\to\infty).
	$$

\noindent
{\bf c)}
We say that $(X_1,Y_1)$, $(X_2,Y_2)$, \dots is exponentially
$\beta$-mixing
if there exists constants $\const[new1], \const[new2]>0$ such that
\[
{\beta}_s((X_1,Y_1), (X_2,Y_2), \dots)
\leq \const[new1] \cdot e^{- \const[new2] \cdot s} \quad (s \in \N),
\]
i.e., if
\begin{eqnarray*}
&&
\sup_{k\in\N} 
\EXP\{\esssup_{A\in \B_{d+1}
} 
| \PROB\{   (X_{k+s},Y_{k+s}) \in A| (X_1,Y_1), \dots, (X_k,Y_k)\}
-
\PROB\{ (X_{k+s},Y_{k+s}) \in A\}|
\}
\\
&&
\leq \const[new1] \cdot e^{- \const[new2] \cdot s} \quad (s \in \N).
\end{eqnarray*}
\end{definition}
It was shown in Ma and Safikhani (2022) that least squares estimates
based on deep neural networks achieve dimension reduction 
also in the case of exponentially $\beta$--mixing data provided the
regression function satisfies a hierarchical composition model.
In a time series setting a related result, which covers in addition
the case of manifold data, has been obtained by
Padilla et al. (2024).

The above least squares estimates based on deep neural networks cannot
be computed in applications since the computation of the least squares
estimates requires minimization of the empirical risk of the
networks, which is a nonlinear and non-convex function of the weights. Instead
one computes these estimates approximately by using gradient descent.
Here, even in i.i.d case, it is not known whether the corresponding 
estimates achieve dimension reduction in case that the regression 
function satisfies a hierarchical composition model or in case that the predictor
takes on values on some submanifold of $\R^d$ as long as the
size of the network or the number of gradient descent steps are not
exponentially growing in the sample size. 

In this article we focus on the case where the regression function is
$(p,C)$--smooth.
In this situation it was shown in Kohler (2026) that there exists a deep
neural network estimate
(with logistic activation function)
 learned by gradient descent, where both the
size of the network and the number of gradient descent steps are
bounded by a polynomial in the data size $n$, which satisfies for any
$\epsilon>0$
\begin{equation}
\label{se1eq2}
\EXP \int |m_n(x)-m(x)|^2 \PROB_X(dx)
\leq \const[new3] \cdot n^{-\frac{2p}{2p+d}+\epsilon}
\end{equation}
for some $\const[new3]=\const[new3](\epsilon) >0$
in case where the data are i.i.d, that $X$ takes on with probability one 
values in some bounded set, that the regression function is $(p,C)$--smooth 
and that the distribution of $Y$ is subgaussian (cf., (A1) below).

In this article we use the following model for our data:
We assume that
 \[
(X,Y), \,(X_1,Y_1),\,(X_2,Y_2),\, \dots
\]
are $\R^d\times \R$-valued random variables defined on the same probability space satisfying
\begin{itemize}
\item[(A1)] $Y$ is subgaussian, i.e., $\EXP\left\{e^{\const[c1th1]\cdot Y^2}\right\}<\infty$ for some $\const[c1th1]>0$,
\item[(A2)] $\supp(X)$ is bounded,
\item[(A3)] $m(x)=\EXP\{Y|X=x\}$ is $(p,C)$-smooth for some $p,C>0$,
\item[(A4)] $(X,Y), \,(X_1,Y_1),\, \dots$ are identically distributed,
\item[(A5)] $(X,Y)$ is independent of $(X_1,Y_1),\,(X_2,Y_2),\, \dots$,
\item[(A6)] $(X_1,Y_1),\,(X_2,Y_2),\, \dots$ is
exponentially $\beta$-mixing.
\end{itemize}
Our main results in this article extend the above-described result
from Kohler (2026) in two ways:
First, we show that (\ref{se1eq2}) also holds in case that the data
is exponentially $\beta$-mixing. Second, we consider the case that
$X$ takes on values on some $d^*$-dimensional manifold.
In this case we show that our deep neural network regression estimate
learned by gradient descent satisfies for any $\epsilon>0$
\[
\EXP \int |m_n(x)-m(x)|^2 \PROB_X(dx)
\leq \const[new4] \cdot n^{-\frac{2p}{2p+d^*}+\epsilon}
\]
for some $\const[new4]=\const[new4](\epsilon)>0$
in case that the data is exponentially $\beta$-mixing, that $X$ takes on
with probability one values in some $d^*$--dimensional manifold,
that the regression function is $(p,C)$--smooth and that the
distribution of $Y$ is subexponential. 

\subsection{Discussion of related results}

In recent years, a large number of research studies were devoted to analysis of regression estimates based on deep neural networks and trained 
by independent, identically distributed (i.i.d.) data. 
In order to establish theoretical guarantees for these estimates one needs to 
simultaneously study approximation, estimation and optimization issues.
 
Approximation abilities of neural network regression estimates have been 
investigated by Yarotsky (2017, 2018),  Yarotsky and Zhevnerchuk (2019), 
Lu et al. (2020), Langer (2020) and in the literature cited therein. 
The authors studied expressive powers of shallow and deep neural networks 
regression estimates with specific activation functions such 
as ReLU.

Generalization aspects of deep neural network regression estimates have been 
established by using the classical VC theory 
to bound the VC dimension of classes of neural networks, see Bartlett et al. (2017), 
Bartlett et al. (2019) and by means of bounding the Rademacher complexity, 
see,  e.g., Bartlett and Mendelson (2002), Liang, Rakhlin and Sridharan (2015), 
Golowich, Rakhlin and Shamir (2019), Lin and Zhang (2019) and Wang and Ma (2022).

The asymptotic properties of the neural network regression estimates minimizing 
least squares have been studied by Kohler and Krzy\.zak (2017).
They obtained the rate of convergence independent of the input dimension 
within the class of regression functions following the hierarchical interaction model 
with additional smoothness constraints. These results were later extended to arbitrary 
smooth functions by Bauer and Kohler (2019). Furthermore, Schmidt-Hieber (2020) 
proved that the least squares neural network regression estimates with ReLU activations that satisfy certain 
sparsity constraints achieve the minimax rates of convergence up to logarithmic factors 
for a more general class of functions called hierarchical composition model. 
Kohler and Langer (2021) obtained the same rate for a linear combination of
fully connected networks without sparsity constraints. 
Further extensions of these results were obtained by Imaizumi and Fukamizu (2018), 
Suzuki (2018) and Suzuki and Nitanda (2019).  

Solving least squares problem for deep neural network regression estimation in practice 
is prohibitively expensive. Alternatively, gradient descent has been established 
as a method of choice for optimizing the weights in deep neural networks. 
Zou  et al. (2018), Du et al. (2019), Allen-Zhu, Li and Song (2019) 
and Kawaguchi and Huang (2019) proved that gradient descent applied to over-parameterized deep
neural networks yields neural networks globally minimizing the empirical risk, 
however Kohler and Krzy\.zak (2021) showed the resulting estimates do  not generalize well. 
Cao and Gu (2019) and Chen et al. (2021) 
proved generalization ability of the neural network estimates 
trained by gradient descent, but they did not analyze their approximation abilities.

In order to establish theoretical guarantees for the estimates trained by 
gradient descent the approximation, generalization and optimization errors 
have to be studied simultaneously. Such approach has been used by Braun et al. (2024) 
for shallow (or one hidden layer) neural network and it led to dimension-free rate of 
convergence for regression functions satisfying Barron (1993, 1994) condition, i.e., 
for functions whose Fourier transform has a finite first moment. 
In the case of deep neural networks with multiple hidden layers Kohler and Krzy\.zak (2025a) 
used over-parametrized deep neural networks defined as
a linear combination of a 
huge number of deep neural networks computed in parallel with randomly 
initialized weights and they applied gradient descent to effectively select 
a subset of the neural networks within the linear combination. 
They apply metric entropy bounds, see Birman and Solomjak (1967) 
and Li, Gu and Ding (2021), to 
establish generalization abilities of over-parametrized neural
networks and show the rate of convergence of order close to
$n^{-1/(1+d)}$, whereas for interaction models the rate becomes
$n^{1/(1+d^*)}$, i.e., it is independent of dimension, where $d^*$ is
effective dimension of the interaction model. The above results are 
valid for $(p,C)$--smooth
 regression functions with $p=1/2$.
In Kohler (2026) over-parametrized deep neural neural network
estimates learned by gradient descent have been analyzed in the case of a
$(p,C)$--smooth regression function with general $p$.
An interesting by-product of these studies is that learning of inner weights 
of the deep network regression estimate is not important. E.g., Gonon (2021) 
does not train inner weights at all, whereas Braun et al. (2024), 
Kohler and Krzy\.zak (2025a) and Kohler (2026) use the fact that the relevant inner 
weights remain close to their starting values during gradient descent and they use 
gradient descent to only  train output weights of $L_2$ regularized network.  
Similar approach has also been taken by Andoni et al. (2014), Daniely (2017),  
Huang, Chen and Siew (2006), Rahimi and Recht (2008a), Rahimi and Recht (2008b) 
and Rahimi and Recht (2009). 
Universal consistency of over-parametrized deep learning regression estimates 
trained by gradient descent has been demonstrated by Drews and Kohler (2024). 
Statistically guided deep learning has been investigated by Kohler and Krzy\.zak (2025b), 
who have established a bound on the expected $L_2$ error of a deep neural network 
regression estimate. 

There exist alternative approaches for analyzing gradient descent training. 
One is the neural tangent kernel proposed by 
Jacot, Gabriel and Hongler (2020) and studied by Hanin and Nica (2019), 
Wilson et al. (2025) and by Mahowald et al (2026). Another approach 
is the  mean-field approach, cf., Mei, Montanari and Nguyen (2018), 
Chizat and Bach (2018) (further extended by Wojtowytsch (2020)) 
and Nguyen and Pham (2020). 
A comprehensive survey of over-parametrized deep neural network
estimates trained by gradient descent is presented in 
Bartlett, Montanari and Rakhlin (2021).

Recently introduced deep transformer networks have been very successful 
in large language models such as Chat GPT and in image and video processing 
and they became very popular in research and applications. 
They were introduced by Vaswani et al. (2017) and their approximation 
and generalization abilities were established by Gurevych, Kohler and Sahin (2022). 
The rates of convergence of over-parametrized transformer classifiers learned by 
gradient descent have been obtained by Kohler and Krzy\.zak (2023).    

All results presented above concern the case of independent, 
identically distributed (i.i.d.) data. In recent years a number 
of results for deep learning with dependent data have become available. 
The most common notions of dependence are so-called mixing coefficients. 
The $\alpha$-mixing has been introduced by Rosenblatt (1956).  
The $\beta$-mixing has been defined by Kolmogorov, but first appeared in 
the paper by Wolkonski and Rozanov (1959). Ibragimov (1962) introduced the 
$\phi$-mixing coefficient. Blum, Hanson and Koopmans (1963) introduced 
the $*$-mixing coefficient. Properties of various dependencies were proven 
in Doob (1963). A general survey of mixing coefficients and examples is presented 
by Doukhan (1994). Barrera and Gobet (2021) generalize uniform deviation inequalities 
for the empirical process from the independent data to dependent $\beta$-mixing case. 
Their results make it possible to analyze errors of the least-squares regression 
schemes for dependent data. 
Kengne and Wade (2025a) studied deep learning with weakly dependent data and 
Kengne and Wade (2025b) considered strongly mixing observations. A general framework 
for deep learning with dependent data was proposed by Kengne and Wade (2025c). 
They obtained rates of convergence for dependencies described by $C$-mixing processes, 
strong mixing processes and $\phi$-mixing processes. Alquier and Kengne (2025) 
proved minimax optimality of deep neural network estimates with ReLU activation 
for the least squares regression problem and Markov chain dependence. 
Ma and Safikhani (2022) established the non-asymptotic bounds for prediction 
errors of sparse neural networks with ReLU activation functions under various 
mixing conditions including $\alpha$-mixing process and autoregressive time-series 
process. None of the above papers considers deep neural network estimates learned 
by gradient descent. In the present paper we analyze deep neural network estimates 
learned by gradient descent in the case of exponential $\beta$-mixing data.

\subsection{Notation}
\label{se1sub5}
  The sets of natural numbers, real numbers and nonnegative real numbers
  are denoted by $\N$, $\R$ and $\R_+$, respectively.
  For $z \in \R$, we denote
the smallest integer greater than or equal to $z$ by
$\lceil z \rceil$. And the largest integer less than or equal to $z$
is denoted by $\lfloor z \rfloor$.
The Euclidean norm of $x \in \Rd$
is denoted by $\|x\|$, and the scalar product of $x,y \in \R^d$ is denoted
by $<x,y>$. 
For $f:\R^d \rightarrow \R$
\[
\|f\|_\infty = \sup_{x \in \R^d} |f(x)|
\]
is its supremum norm.
If $(X_i)_{i \in I}$ is a nonempty family of real valued random
variables,
we define $\esssup_{i \in I} X_i$ as the (almost surely) unique
variable $Y$ which satisfies
\begin{itemize}
\item[(i)] $ Y \geq X_i$ $a.s.$ for every $i \in I$,
\item[(ii)] for any variable $\tilde{Y}$ which satisfies (i) we have
  $Y \leq \tilde{Y}$ $a.s.$
\end{itemize}

A finite collection $f_1, \dots, f_N:\Rd \rightarrow \R$
  is called an $L_p$ $\varepsilon$--covering of $\F$ on $x_1^n$
  if for all $f \in \F$
  \[
  \min_{1 \leq j\leq N}
  \left(
  \frac{1}{n} \sum_{k=1}^n |f(x_k)-f_j(x_k)|^p
  \right)^{1/p} \leq \varepsilon
  \]
  hold.
  The $L_p$ $\varepsilon$--covering number of $\F$ on $x_1^n$
  is the  size $N$ of the smallest $L_p$ $\varepsilon$--covering
  of $\F$ on $x_1^n$ and is denoted by $\Nu_p(\varepsilon,\F,x_1^n)$.

For $z \in \R$ and $\kappa \geq 0$ we define
$T_\kappa z = \max\{-\kappa, \min\{\kappa,z\}\}$. If $f:\R^d \rightarrow
\R$
is a function  then we set
$
(T_{\kappa} f)(x)=
T_{\kappa} \left( f(x) \right)$, and if $\F$ is a class of functions
$f:\R^d \rightarrow \R$ we set
$T_\kappa \F = \{ T_\kappa f \, : \, f \in \F \}$.

\subsection{Outline}
\label{se1sub6}
Section \ref{se2} contains the definition of the estimate.
The main results are presented in Section \ref{se3}  and proven
in Section \ref{se4}. Auxiliary results concerning mixing are
presented in the appendix.

\section{Definition of the estimate}
\label{se2}
We use the so--called logistic squasher
$\sigma(x)=1/(1+e^{-x})$
as our activation function
throughout the paper.
The topology we use is the same as in Kohler (2026) and is defined as follows:
For parameters $K_n, L, r \in \N$ we set
\begin{equation}\label{se2eq1}
f_\bw(x) = \sum_{j=1}^{K_n} w_{1,1,j}^{(L)} \cdot f_{j,1}^{(L)}(x)
\end{equation}
for some $w_{1,1,1}^{(L)}, \dots, w_{1,1,K_n}^{(L)} \in \mathbb{R}$, where
$f_{j,1}^{(L)}=f_{\bw,j,1}^{(L)}$ are recursively defined by
\begin{equation}
  \label{se2eq2}
f_{k,i}^{(l)}(x) = 
f_{\bw,k,i}^{(l)}(x) = 
\sigma\left(\sum_{j=1}^{r} w_{k,i,j}^{(l-1)}\cdot f_{k,j}^{(l-1)}(x) + w_{k,i,0}^{(l-1)} \right)
\end{equation}
for some $w_{k,i,0}^{(l-1)}, \dots, w_{k,i, r}^{(l-1)} \in \mathbb{R}$
$(l=2, \dots, L)$
and
\begin{equation}
  \label{se2eq3}
f_{k,i}^{(1)}(x) = 
f_{\bw,k,i}^{(1)}(x) = 
\sigma \left(\sum_{j=1}^d w_{k,i,j}^{(0)}\cdot x^{(j)} + w_{k,i,0}^{(0)} \right)
\end{equation}
for some $w_{k,i,0}^{(0)}, \dots, w_{k,i,d}^{(0)} \in \mathbb{R}$.

\noindent
That is, we consider neural networks consisting of $K_n$ parallel fully connected 
subnetworks of depth $L$ and width $r$, with the final output given by a linear combination of their outputs.
We write
$(w_{k,i,j}^{(l)})_{i,j,l}$ for the weights in the $k$-th subnetwork, where
$w_{k,i,j}^{(l)}$ denotes the weight between neuron $j$ in layer
$l$ and neuron $i$ in layer $l+1$.

The weights $\bw^{(0)}=((\bw^{(0)})_{k,i,j}^{(l)})_{k,i,j,l}$ are first initialized as follows: 
Set
\begin{equation}
\label{se2eq4}
(\bw^{(0)})_{1,1,k}^{(L)}=0
\quad (k=1, \dots, K_n),
\end{equation}
 choose the weights of layer $l \in \{1, \dots, L-1\}$ $(\bw^{(0)})_{k,i,j}^{(l)}$ uniformly distributed on
$[-\const[c1], \const[c1]]$, and
choose the weights of layer $0$
$(\bw^{(0)})_{k,i,j}^{(0)}$ uniformly distributed on
\[
  [-\const[c2] \cdot (\log n) \cdot n^\tau, \const[c2] \cdot (\log n) \cdot n^\tau],
  \]
  for parameters $\const[c1], \const[c2], \tau>0$.
We assume that all components of $\bw^{(0)}$ chosen in this way are independent.

Subsequently, we perform $t_n\in\N$ gradient descent steps of step size $\lambda_n>0$ in an attempt to minimize the empirical $L_2$ risk
\begin{equation}
\label{se2eq5}
F_n(\bw)
= \frac{1}{n} \sum_{i=1}^n | Y_i - f_\bw (X_i)|^2.
\end{equation}
Therefore, we set 
\begin{equation}
\label{se2eq6}
\bw^{(t+1)}
=
\bw^{(t)}
-
\lambda_n \cdot \nabla_{\bw} F_n(\bw^{(t)})
\quad
(t=0, \dots, t_n-1).
\end{equation}
Note that the choice of the weights implies
\begin{align*}
    F_n(\bw^{(0)})=\frac{1}{n}\sum_{i=1}^n|Y_i|^2.
\end{align*}
The final estimate $m_n$ is a truncated version of the neural network with weights according to the weight vector $\bw^{(t_n)}$, i.e.
\begin{equation}
\label{se2eq7}
m_n(x)= T_{\kappa_n} (f_{\bw^{(t_n)}}(x)),
\end{equation}
 where $\kappa_n = \const[c2th1] \cdot \log n\hspace{0.5em}(\const[c2th1]>0)$ and $T_{\kappa} z
= \max\{ \min\{z, \kappa\}, - \kappa\}$ for $z \in \R$
and $\kappa>0$.

\section{Main results}
\label{se3}
Our main results are presented in the following theorem.

\begin{theorem}
  \label{th1}
  Assume (A1) - (A6) holds, set $p=q+s$ where $q\in \N_0$ and $s\in (0,1]$, 
  set $\kappa_n = \const[c2th1]\cdot \log n$ for some $\const[c2th1]>0$ 
  and assume  $\const[c2th1] \cdot \const[c1th1] \geq 3$. 
  Define the estimate $m_n$ as in Section \ref{se2} 
  and assume $\const[c1], \const[c2] >0$ are sufficiently large.
  
  \noindent
  {\bf a)} Set
  \[
  L=\lceil \log_2(q+d) \rceil+1,
  \quad r=4 \cdot \lceil (p+d)^2 \rceil
  \quad \text{and}\quad
  \tau=\frac{1}{2p+d}
  \]
  and choose
  $K_n \in \N$ such that for some $\const[th1const1] >0$
  \[
  \frac{K_n}{n^{\const[th1const1]}} \rightarrow 0 \quad (n \rightarrow \infty)
  \]
  and
  \begin{equation}\label{th1eq1}
  \frac{K_n}{n^{
((2p+2d)\tau + 1.5)\, r((r+1)(L-1) + (d+1))
+
\tau (r(d+1) + 4(p+d))
+
2
  }}
  \rightarrow \infty \quad (n \rightarrow \infty).
  \end{equation}
  Set
\[
\lambda_n=\frac{\const[c3th1]}{K_n^{3/2}\cdot \kappa_n}
\quad \text{and}\quad
t_n=\left\lceil
\const[c4th1] \cdot \frac{K_n^{3/2}}{\kappa_n}
\right\rceil
\]
for some $\const[c3th1], \const[c4th1] >0$.
Then we have for any $\epsilon>0$
\[
\EXP \int | m_n(x)-m(x)|^2 \PROB_X (dx)
\leq \const[th1const2] \cdot n^{- \frac{2p}{2p+d} + \epsilon}
\]
for some $\const[th1const2]=\const[th1const2](\epsilon)>0$.

\noindent
{\bf b)}
  Let $\M\subseteq \R^d$ be a $d^*$-dimensional Lipschitz-manifold and assume $\supp(X)\subseteq \M$.
  Set $L,r$ as in part a) and set
  \[
  \tau=\frac{1}{2p+d^*}.
  \]
 
 Choose $K_n$, $\lambda_n$ and $t_n$ as in part a) with this new value for $\tau$.
Then we have for any $\epsilon>0$
\[
\EXP \int | m_n(x)-m(x)|^2 \PROB_X (dx)
\leq \const[th1const3] \cdot n^{- \frac{2p}{2p+d^*} + \epsilon}
\] 
for some $\const[th1const3]=\const[th1const3](\epsilon)>0$.
  \end{theorem}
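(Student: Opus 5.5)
We follow the three-part strategy of Kohler (2026) — approximation, optimization, generalization — and change only the generalization step, where independence was used. The mixing is handled by a blocking argument.

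\textbf{Error decomposition.} Let $\beta_n>0$ be a truncation level with $\beta_n = c \cdot \log n$, chosen via (A1) and the assumption $\const[c2th1]\cdot\const[c1th1]\geq 3$. On the event where $\max_i |Y_i| \le \beta_n$ fails, we bound the error crudely by $\kappa_n^2$. Since $m_n$ is bounded by $\kappa_n$, the subgaussian tail gives this event probability $O(n^{-2})$. Off this event we write
\[
\int |m_n-m|^2 \,d\PROB_X = \Big(\EXP\{|m_n(X)-Y|^2\,|\,\D_n,\bw^{(0)}\} - \EXP|m(X)-Y|^2\Big),
\]
which uses (A5): $(X,Y)$ is independent of the data. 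We then split it as
\[
T_{1,n} = \Big[\EXP\{\ldots\} - \EXP|m(X)-Y|^2\Big] - 2\cdot \frac1n\sum_{i=1}^n\Big(|m_n(X_i)-Y_i|^2-|m(X_i)-Y_i|^2\Big)
\]
plus twice the empirical term $T_{2,n}$.

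The empirical term $T_{2,n}$ involves no probability over the data beyond what is pathwise. We therefore reuse the deterministic gradient descent analysis of Kohler (2026) verbatim. With high probability over $\bw^{(0)}$, the inner weights stay in a small neighbourhood of their initial values. Among the $K_n$ random subnetworks, condition (\ref{th1eq1}) guarantees that enough of them lie near the weights of an approximating network. The output weights then essentially solve an $L_2$-regularized linear least squares problem. This yields $F_n(\bw^{(t_n)}) - \frac1n\sum_i|Y_i-m(X_i)|^2 \le c\, n^{-2p/(2p+d)+\epsilon}$ up to approximation error, provided the approximation result for $(p,C)$-smooth $m$ holds on the bounded support of $X$. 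The only data-dependent ingredient is $\frac1n\sum Y_i^2 \le c\log^2 n$ on the truncation event, and this holds regardless of dependence.

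For part b) we modify the approximation step. We cover $\M$ by the charts $\psi_l$. Since $\psi_l$ is bi-Lipschitz, $m\circ\psi_l$ is Lipschitz-smooth only up to order $\min(p,1)$, so we instead approximate $m$ itself using a grid adapted to $\M$. Only $O(h^{-d^*})$ cubes of side $h$ in $\R^d$ meet $\M$. Local Taylor-polynomial networks on these cubes need $h=n^{-\tau}$ with $\tau=1/(2p+d^*)$. This reduces the number of relevant random subnetworks from $n^{d\tau}$ to $n^{d^*\tau}$ and gives the rate with $d^*$.

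\textbf{Generalization under mixing (main obstacle).} $T_{1,n}$ must be bounded uniformly over the class of networks that gradient descent can reach: output weights with bounded $\ell_2$ norm and inner weights near initialization, truncated at $\kappa_n$. Kohler (2026) bounds this class by metric entropy, with covering number $\log\Nu_2(\delta)\le c\, n^{d\tau+\epsilon}\log(1/\delta)$ or similar. For $\beta$-mixing data we use Berbee's coupling / blocking.

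The plan is to split $1,\dots,n$ into $2\mu_n$ alternating blocks of length $a_n = \lceil c\log n\rceil$. By the coupling lemma, which goes in the appendix, the odd blocks can be replaced by independent copies at total-variation cost $\mu_n\beta_{a_n}\le n\cdot c\, e^{-c a_n}\le n^{-2}$ for $c$ large. The same holds for the even blocks.

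On the independent blocks, the block sums $\sum_{i\in \text{block}}(|f(X_i)-Y_i|^2-|m(X_i)-Y_i|^2)$ are i.i.d. and bounded by $a_n\beta_n^2$. Their variance is controlled by $a_n^2 \cdot$ mean, via the usual Bernstein-type variance-to-mean relation. The standard i.i.d. inequality (Theorem 11.4 of Gy\"orfi et al. (2002)) applied with $\mu_n = n/(2a_n)$ samples then gives
\[
\EXP T_{1,n} \le c\,\frac{\log^{c} n \cdot \log \Nu}{n}.
\]
The polylog loss is absorbed into $n^{\epsilon}$.

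The delicate point is that $m_n$ depends on $\bw^{(0)}$ and on all the data. We handle this by conditioning on $\bw^{(0)}$, which is independent of the data, and taking the supremum over the data-independent class. Summing the truncation, coupling, approximation and optimization errors then yields both rates.
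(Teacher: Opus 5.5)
Your plan for part b) has a genuine gap. In this over-parametrized setting, the number of subnetworks used by the approximating network does not drive the rate. The approximation error is governed by the first-layer weight scale $n^{\tau}$, giving $n^{-2p\tau}$. The estimation error is governed by the metric entropy of the \emph{whole} class $\F_n$ reachable by gradient descent, i.e.\ all $K_n$ subnetworks with first-layer weights up to $c\,(\log n)\,n^{\tau}$. Take the entropy bound you quote, $\log\Nu\le c\,n^{d\tau+\epsilon}$, with $\tau=1/(2p+d^*)$. Your generalization term is then of order $n^{d\tau-1+\epsilon}=n^{-(2p+d^*-d)/(2p+d^*)+\epsilon}$. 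This is slower than $n^{-2p/(2p+d^*)+\epsilon}$ whenever $d^*<d$, and need not even tend to zero. Using only $O(n^{d^*\tau})$ ``relevant'' subnetworks in the approximant does not help, because the supremum in the generalization step runs over all of $\F_n$.

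The missing ingredient is a covering number bound computed on points of $\M$. Each network $f_\bw$, $\bw\in\W_n$, has $k$-th order partial derivatives bounded by $c\,K_n B^{(L-1)k}A^k$ with $A\asymp(\log n)\,n^{\tau}$. Hence it is uniformly $\epsilon$-close to a piecewise polynomial of degree $k-1$ on cubes of side $h\asymp(\epsilon/(K_nB^{(L-1)k}A^k))^{1/k}$. By the bi-Lipschitz charts, only $O(h^{-d^*})$ of these cubes meet $\M$. For $k$ large this yields $\log\Nu_1(u/(80\kappa_n),\F_n,x_1^n)\le c\,n^{\tau d^*+\epsilon/2}\log n$ for all $x_1^n\in\M^n$; this is Lemma \ref{le2ko24} combined with Lemma \ref{le0.1}. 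Your observation that only $O(h^{-d^*})$ cubes meet $\M$ is exactly the right one, but it belongs in the entropy bound, not in the approximation. The paper simply approximates $m$ on all of $[-A,A)^d$ via Theorem \ref{th3} with $K\asymp n^{1/(2p+d^*)}$, i.e.\ with $rK^d$ distinct subnetworks; no manifold-adapted approximation is needed.

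A second gap concerns the mixing step and affects both parts. Definition \ref{intde3}, and hence (A6), only controls $\beta((Z_1,\dots,Z_k),Z_{k+s})$ for a \emph{single} future observation. Coupling contiguous blocks of length $a_n$ at cost $\mu_n\beta_{a_n}$ needs the coefficient between the past and an entire future block, and the single-observation coefficient does not control it. For instance, take i.i.d.\ fair bits $U_t$ and set $Z_1=U_1$, $Z_{2j}=U_{2j}$, $Z_{2j+1}=U_{2j}\oplus U_1$. The terms are identically distributed and $\beta_s=0$ for $s\ge 2$, yet every pair $(Z_{2j},Z_{2j+1})$ determines $Z_1$.

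The paper avoids this by using interleaved subsequences $I_{n,k}=\{k,k+N_n,k+2N_n,\dots\}$ with $N_n=\lceil(\log n)^2\rceil$. Each is coupled one observation at a time to an i.i.d.\ sequence (Lemma \ref{lea2}, plus the tower property to pass to the lag-$N_n$ past), at cost $(|I_{n,k}|-1)\,\beta_{N_n}$. Theorem 11.4 of Gy\"orfi et al.\ (2002) is then applied directly to a sample of size about $n/(\log n)^2$. This also makes your block-sum variance argument unnecessary; as written it would not fit Theorem 11.4 anyway, which is stated for single squared-loss differences. Replacing your contiguous blocks by such interleaved single observations repairs this step.
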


\noindent
{\bf Remark 1.} We say that $(X_1,Y_1)$, $(X_2,Y_2)$, \dots is subexponentially
$\beta$-mixing
if there exists constants $\const[new5], \const[new6]>0$ and $\delta \in (0,1)$ such that
\[
{\beta}_s((X_1,Y_1), (X_2,Y_2), \dots)
\leq \const[new5] \cdot e^{- \const[new6] \cdot s^\delta} \quad (s \in \N),
\]
i.e., if
\begin{eqnarray*}
&&
\sup_{k\in\N} 
\EXP\{\esssup_{A\in \B_{d+1}
} 
| \PROB\{   (X_{k+s},Y_{k+s}) \in A| (X_1,Y_1), \dots, (X_k,Y_k)\}
-
\PROB\{ (X_{k+s},Y_{k+s}) \in A\}|
\}
\\
&&
\leq \const[new5] \cdot e^{- \const[new6] \cdot s^\delta} \quad (s \in \N).
\end{eqnarray*}
It follows from the proof of Theorem \ref{th1} that the assertion
also holds if the sample is subexponential $\beta$-mixing. To prove
this it suffices to choose
\[
N_n = \lceil (\log n)^{2/\delta} \rceil
\]
in the proof of Lemma \ref{lea3}.

\noindent
{\bf Remark 2.}  Let $(X_t)_{t \in \N}$ be a sequence of
exponentially $\beta$-mixing $\R^d$--valued
random variables, let $(\epsilon_t)_{t \in \N}$ be a sequence
of independent identically distributed square integrable
real-valued random variables
with expectation  zero which are independent of $(X_t)_{t \in \N}$,
let $m: \R^d \rightarrow \R$ be a Borel-measurable function
such that $\EXP\{|m(X_t)|^2\}<\infty$, and set
\[
Y_t = m(X_t) + \epsilon_t \quad (t \in \N).
\]
Then $((X_t,Y_t))_{t \in \N}$ is exponentially $\beta$-mixing since
\begin{eqnarray*}
&&
\sup_{k\in\N} 
\EXP\{\esssup_{A\in \B_{d+1}
} 
| \PROB\{   (X_{k+s},Y_{k+s}) \in A| (X_1,Y_1), \dots, (X_k,Y_k)\}
-
\PROB\{ (X_{k+s},Y_{k+s}) \in A\}|
\}
\\
&&
\leq 
\sup_{k\in\N} 
\EXP\{\esssup_{A\in \B_{d+1}
} 
| \PROB\{   (X_{k+s},Y_{k+s}) \in A| X_1, \epsilon_1, \dots, X_k, \epsilon_k\}
-
\PROB\{ (X_{k+s},Y_{k+s}) \in A\}|
\}
\\
&&
=
\sup_{k\in\N} 
\EXP\{\esssup_{A\in \B_{d+1}
} 
| \PROB\{   (X_{k+s},Y_{k+s}) \in A| X_1 \dots, X_k	\}
-
\PROB\{ (X_{k+s},Y_{k+s}) \in A\}|
\}
\end{eqnarray*}
where the last equality holds
because of the independence of $(X_t)_t$ and $(\epsilon_t)_t$, and
by using the symmetry of the $\beta$-mixing coefficient (cf., e.g.,
Remark 2.4 in Barrera and Gobet (2021)) we can conclude in the same
way that this in turn is bounded by
\begin{eqnarray*}
&&
\sup_{k\in\N} 
\EXP\{\esssup_{A\in \B_{d}
} 
| \PROB\{   X_{k+s} \in A| X_1, \dots, X_k\}
-
\PROB\{ X_{k+s} \in A\}|
\}.
\end{eqnarray*}
Examples of sequences of exponentially $\beta$-mixing $\R^d$--valued
random variables can be found, e.g., in Kurisu, Fukami and Koike (2024).
A simple example of a exponentially $\beta$-mixing sequence of identically
distributed real valued random variables is given by the $AR(1)$-model
\[
X_{t+1} = \frac{1}{\sqrt{2}} \cdot X_t +  \frac{1}{\sqrt{2}} \cdot \bar{\epsilon}_{t+1} \quad (t \in \N)
\]
where $X_1, \bar{\epsilon}_2, \bar{\epsilon}_3, \dots$ are independent
and identically standard normally distributed random variables.

    \section{Proofs}
\label{se4}

In the proof of Theorem \ref{th1} we will need results which help us to analyze the
optimization error of the estimate, the approximation error of the
estimate, and the generalization error of the estimate, which we
present before the proof in separate subsections.

\subsection{Neural network optimization}
\label{se4sub1}

Let $d, J_n \in \N$, and for
\[
\bw=(w_1, \dots, w_{J_n}) \in \R^{J_n}
\]
let
$
  f_{\bw}: \R^d \rightarrow \R
  $
  be a (deep) neural network with weight vector $\bw$ as defined by (\ref{se2eq1}) - (\ref{se2eq3}).
Further let
   \begin{equation}\label{se4eq1}
    f_{lin, \bw, \tilde{\bw}}(x)
    =
    f_{\bw}(x)+
    \sum_{j=1}^{J_n}
    \frac{\partial f_{\bw}(x)}{ \partial \bw^{(j)}}
      \cdot
      (\tilde{\bw}^{(j)} - \bw^{(j)})
   \end{equation}
be the linear Taylor polynomial of $f_{\tilde{\bw}}(x)$ around $\bw$.
  
\begin{lemma}
  \label{le1}
  Let $F_n$ be defined by (\ref{se2eq5}), $\bw^{(t+1)}$ by (\ref{se2eq6}) and $f_{lin, \bw, \tilde{\bw}}$ by (\ref{se4eq1}).
  Let $\bw^*\in\R^{J_n}$.
Assume
  \begin{equation}
    \label{le1eq1}
    F_n( \bw^{(s+1)}) \leq F_n(\bw^{(s)}) - \frac{\lambda}{2}
    \cdot \left\|
\nabla_\bw F_n(\bw^{(s)})
    \right\|^2
    \end{equation}
for $s=0,1, \dots, t_n-1$,
  \begin{equation}
    \label{le1eq2}
    \left|
f_{lin,\bw,\bw^*}(x)-f_{\bw^*}(x)
\right|
\leq
C_n \cdot \| \bw^* - \bw\|^2
    \end{equation}
  for all $x \in \{X_1, \dots, X_n \}$ and all $\bw \in \R^{J_n}$ with
  $
  \| \bw^* - \bw\|
  \leq
  \| \bw^* - \bw^{(0)}\|,
  $
  \begin{equation}
    \label{le1eq3}
    |Y_i| \leq \kappa_n \quad (i=1, \dots, n),
    \end{equation}
  \begin{equation}
    \label{le1eq4}
    |f_{\bw^*}(X_i)| \leq \kappa_n \quad (i=1, \dots, n)  
    \end{equation}
and
  \begin{equation}
    \label{le1eq5}
    C_n \cdot  \| \bw^* - \bw^{(0)}\|^2 \leq \kappa_n.
  \end{equation}
  Then
  \begin{eqnarray*}
    &&
    F_n (\bw^{(t_n)})
    \leq
    F_n( \bw^*)
    +
    \left(
5 \cdot \kappa_n \cdot C_n + \frac{1}{2 \cdot \lambda \cdot t_n}
    \right)
    \cdot
    \| \bw^* - \bw^{(0)}\|^2
    +
    \frac{F_n( \bw^{(0)})}{t_n}.
    \end{eqnarray*}
\end{lemma}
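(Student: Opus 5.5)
This is the standard argument for gradient descent on an "almost convex" function, as in Kohler (2026) and Braun et al. (2024). We compare each iterate with $\bw^*$ by a potential argument on $\|\bw^*-\bw^{(s)}\|^2$, and replace convexity of $F_n$ by the linearization bound (\ref{le1eq2}).

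\textbf{Step 1 (approximate convexity).} Let $\bw$ satisfy $\|\bw^*-\bw\|\le\|\bw^*-\bw^{(0)}\|$. Since $z\mapsto (Y_i-z)^2$ is convex,
\[
F_n(\bw^*) \geq \frac1n\sum_{i=1}^n |Y_i-f_{lin,\bw,\bw^*}(X_i)|^2 - \frac2n\sum_{i=1}^n |Y_i - f_{lin,\bw,\bw^*}(X_i)|\cdot|f_{lin,\bw,\bw^*}(X_i)-f_{\bw^*}(X_i)|.
\]
By (\ref{le1eq2})--(\ref{le1eq5}) we have $|Y_i-f_{lin,\bw,\bw^*}(X_i)|\le 3\kappa_n$. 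The error term is therefore at most $6\kappa_n C_n\|\bw^*-\bw\|^2$; some care is needed to get the constant 5, but the constant is not essential to the method. The first sum is a convex quadratic in the linearized function, and its gradient at $\tilde\bw=\bw$ equals $\nabla F_n(\bw)$. Convexity of $\tilde\bw\mapsto\frac1n\sum_i|Y_i-f_{lin,\bw,\tilde\bw}(X_i)|^2$ then gives
\[
F_n(\bw^*) \ge F_n(\bw) + \langle \nabla F_n(\bw), \bw^*-\bw\rangle - c\,\kappa_n C_n \|\bw^*-\bw\|^2 .
\]

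\textbf{Step 2 (one-step recursion).} Expand
\[
\|\bw^*-\bw^{(s+1)}\|^2=\|\bw^*-\bw^{(s)}\|^2+2\lambda\langle\nabla F_n(\bw^{(s)}),\bw^*-\bw^{(s)}\rangle+\lambda^2\|\nabla F_n(\bw^{(s)})\|^2.
\]
Bound the inner product with Step 1, and bound $\lambda^2\|\nabla F_n\|^2$ by $2\lambda(F_n(\bw^{(s)})-F_n(\bw^{(s+1)}))$ using (\ref{le1eq1}). This yields
\[
F_n(\bw^{(s+1)})-F_n(\bw^*) \le \frac{\|\bw^*-\bw^{(s)}\|^2-\|\bw^*-\bw^{(s+1)}\|^2}{2\lambda} + c\kappa_n C_n\|\bw^*-\bw^{(s)}\|^2 .
\]

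\textbf{Step 3 (staying in the ball).} This is the main obstacle. Step 1 only applies while $\|\bw^*-\bw^{(s)}\|\le\|\bw^*-\bw^{(0)}\|$, so we need an induction on $s$. Whenever $F_n(\bw^{(s+1)})\ge F_n(\bw^*)+c\kappa_nC_n\|\bw^*-\bw^{(0)}\|^2$, the recursion shows the distance does not increase. If instead the iterate is already that good, then monotonicity (\ref{le1eq1}) keeps all later losses below this level, and we can stop the analysis there. To handle this cleanly, we would work with the first time $s_0$ at which $F_n(\bw^{(s)})\le F_n(\bw^*)+c\kappa_nC_n\|\bw^*-\bw^{(0)}\|^2$. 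Up to $s_0$ the distance is non-increasing, so the ball condition holds and the hypotheses remain valid. If no such time exists, the distance is non-increasing throughout.

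\textbf{Step 4 (telescoping).} Sum the inequality of Step 2 over $s<t_n$, or up to $s_0$. Use that $F_n(\bw^{(t)})$ is non-increasing, so that $F_n(\bw^{(t_n)})$ is at most the average of the $F_n(\bw^{(s+1)})$. This gives
\[
F_n(\bw^{(t_n)})\le F_n(\bw^*)+\left(c\kappa_nC_n+\frac1{2\lambda t_n}\right)\|\bw^*-\bw^{(0)}\|^2 .
\]
The extra term $F_n(\bw^{(0)})/t_n$ covers the boundary term in the averaging, for instance one step's contribution bounded by $F_n(\bw^{(0)})$.
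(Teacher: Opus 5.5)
Your argument is correct and is essentially the paper's proof. It uses the same ingredients: the potential $\|\bw^{(s)}-\bw^*\|^2$, approximate convexity from the convex linearized risk together with (\ref{le1eq2}), and a case split. If a good iterate is reached, monotonicity finishes; otherwise the distance is non-increasing and one telescopes.

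One loose end is the constant. As written, your Step 1 yields $6\kappa_nC_n$, not the stated $5\kappa_nC_n$. The fix is to expand around $f_{\bw^*}$ rather than around $f_{lin,\bw,\bw^*}$. Put $\delta_i=f_{lin,\bw,\bw^*}(X_i)-f_{\bw^*}(X_i)$. Then
$|Y_i-f_{lin,\bw,\bw^*}(X_i)|^2-|Y_i-f_{\bw^*}(X_i)|^2=\delta_i\,(\delta_i-2(Y_i-f_{\bw^*}(X_i)))$.
By (\ref{le1eq2}), (\ref{le1eq5}) and the ball condition, $|\delta_i|\le C_n\|\bw^*-\bw\|^2\le\kappa_n$. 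By (\ref{le1eq3}) and (\ref{le1eq4}), $|Y_i-f_{\bw^*}(X_i)|\le 2\kappa_n$. Together these give $5\kappa_nC_n\|\bw^*-\bw\|^2$, which is exactly the paper's first step.

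Your route differs from the paper's in two small ways, both of which are improvements.

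First, the paper splits on the moving threshold $F_n(\bw^{(s+1)})<F_{n,lin,\bw^{(s)}}(\bw^*)$. Even in the good case it needs Lemma 2 to put $\bw^{(s)}$ in the ball before comparing with $F_n(\bw^*)$. Your fixed threshold $F_n(\bw^*)+5\kappa_nC_n\|\bw^*-\bw^{(0)}\|^2$ makes the good case pure monotonicity, so no summation up to $s_0$ is needed there.

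Second, your one-step recursion bounds $F_n(\bw^{(s+1)})$ directly, because the $\lambda^2\|\nabla F_n\|^2$ term is absorbed via (\ref{le1eq1}). Telescoping therefore gives the bound with no $F_n(\bw^{(0)})/t_n$ term at all. That term appears in the paper only because it averages $F_n(\bw^{(t)})$ over $t=0,\dots,t_n-1$ and pays $\frac{\lambda}{2}\sum_t\|\nabla F_n(\bw^{(t)})\|^2\le F_n(\bw^{(0)})$. In your proof it is just nonnegative slack, since $F_n\ge 0$, so your remark about a ``boundary term'' is unnecessary.
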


In the proof of Lemma \ref{le1} we will need the following auxiliary result.

\begin{lemma}
  \label{le2}
  Let $t \in \{ 1, \dots, t_n\}$.
  Set
  \[
  F_{n,lin,\bw}(\bw^*)
  =
\frac{1}{n} \sum_{i=1}^n |Y_i - f_{lin,\bw,\bw^*}(X_i)|^2,  
\]
and assume (\ref{le1eq1}) and
\begin{equation}
  \label{le2eq1}
F_n( \bw^{(s+1)}) \geq F_{n,lin,\bw^{(s)}}(\bw^*)
\end{equation}
for $s=0,1, \dots, t-1$. Then
\[
\| \bw^{(t)} - \bw^{*}\|
\leq
\| \bw^{(0)} - \bw^{*}\|.
\]
\end{lemma}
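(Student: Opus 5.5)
Since $t \mapsto \|\bw^{(t)}-\bw^*\|$ need not be monotone, we aim for the stronger statement that $\|\bw^{(s)}-\bw^*\|^2$ is non-increasing in $s$ for $s=0,\dots,t$. That is, for each $s\in\{0,\dots,t-1\}$ we want
\[
\|\bw^{(s+1)}-\bw^*\|^2 \leq \|\bw^{(s)}-\bw^*\|^2 .
\]
Chaining these inequalities gives the assertion.

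The first step is to expand the square using (\ref{se2eq6}):
\[
\|\bw^{(s+1)}-\bw^*\|^2
=
\|\bw^{(s)}-\bw^*\|^2
-2\lambda\,\langle \nabla_\bw F_n(\bw^{(s)}), \bw^{(s)}-\bw^*\rangle
+\lambda^2\|\nabla_\bw F_n(\bw^{(s)})\|^2 .
\]
So it suffices to show
\[
2\langle \nabla_\bw F_n(\bw^{(s)}), \bw^{(s)}-\bw^*\rangle \geq \lambda \|\nabla_\bw F_n(\bw^{(s)})\|^2 .
\]

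For the inner product we use convexity of the linearized risk. The map $\tilde\bw\mapsto F_{n,lin,\bw^{(s)}}(\tilde\bw)$ is a convex quadratic in $\tilde\bw$, because $f_{lin,\bw^{(s)},\tilde\bw}$ is affine in $\tilde\bw$. Its value at $\tilde\bw=\bw^{(s)}$ is $F_n(\bw^{(s)})$. Its gradient there equals $\nabla_\bw F_n(\bw^{(s)})$, since the linear Taylor polynomial reproduces $f_{\bw^{(s)}}$ and its first derivatives at $\bw^{(s)}$. The first-order convexity inequality then gives
\[
F_{n,lin,\bw^{(s)}}(\bw^*) \geq F_n(\bw^{(s)}) + \langle \nabla_\bw F_n(\bw^{(s)}), \bw^*-\bw^{(s)}\rangle,
\]
which rearranges to
\[
\langle \nabla_\bw F_n(\bw^{(s)}), \bw^{(s)}-\bw^*\rangle \geq F_n(\bw^{(s)}) - F_{n,lin,\bw^{(s)}}(\bw^*).
\]

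Next we apply (\ref{le2eq1}), then (\ref{le1eq1}):
\[
F_n(\bw^{(s)}) - F_{n,lin,\bw^{(s)}}(\bw^*) \geq F_n(\bw^{(s)}) - F_n(\bw^{(s+1)}) \geq \frac{\lambda}{2}\|\nabla_\bw F_n(\bw^{(s)})\|^2 .
\]
Multiplying by two gives exactly the required bound, so the cross term dominates the $\lambda^2$ term and each step does not increase the distance to $\bw^*$.

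The main point needing care is the identification of the gradient of the linearized risk at $\bw^{(s)}$ with $\nabla_\bw F_n(\bw^{(s)})$. This is a direct computation:
\[
\nabla_{\tilde\bw}F_{n,lin,\bw^{(s)}}(\tilde\bw)\big|_{\tilde\bw=\bw^{(s)}} = -\frac{2}{n}\sum_{i=1}^n \bigl(Y_i-f_{\bw^{(s)}}(X_i)\bigr)\nabla_\bw f_{\bw^{(s)}}(X_i),
\]
and the right-hand side is $\nabla_\bw F_n(\bw^{(s)})$. Everything else is algebra, and the step size $\lambda$ enters only through (\ref{le1eq1}), so no further condition on $\lambda$ is needed.
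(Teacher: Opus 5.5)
Your proposal is correct and follows essentially the same route as the paper. The paper also expands $\|\bw^{(s+1)}-\bw^*\|^2$, bounds the inner product from below via convexity of the linearized risk (whose value and gradient at $\bw^{(s)}$ match those of $F_n$), and then uses (\ref{le1eq1}) and (\ref{le2eq1}) to show that each step does not increase the distance; the only difference is that the paper substitutes (\ref{le1eq1}) for the $\lambda^2$ term before invoking (\ref{le2eq1}), a purely cosmetic reordering.
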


\noindent
    {\bf Proof.}
    Let $s \in \{0,1, \dots, t-1\}$ be arbitrary.
    We have
    \[
    F_{n,lin,\bw^{(s)}}(\bw^{(s)})=     F_{n}(\bw^{(s)})
    \quad \mbox{and} \quad
    \nabla_\bw F_{n,lin,\bw^{(s)}}(\bw^{(s)})=     \nabla_\bw F_{n}(\bw^{(s)})
    \]
    and (since $ \bw \mapsto  F_{n,lin,\bw^{(s)}}(\bw)$ is convex
    and differentiable)
    \[
    F_{n,lin,\bw^{(s)}}(\bw^{(s)})
    -
    F_{n,lin,\bw^{(s)}}(\bw^*)
    \leq
    \,
    <
    \nabla_\bw F_{n,lin,\bw^{(s)}}(\bw^{(s)}),
    \bw^{(s)} - \bw^*
    >,
    \]
    which implies
    \begin{eqnarray*}
    <
    \nabla_\bw F_{n,}(\bw^{(s)}),
    \bw^{(s)} - \bw^*
    >
    &=&
     <
    \nabla_\bw F_{n,lin,\bw^{(s)}}(\bw^{(s)}),
    \bw^{(s)} - \bw^*
    >
    \\
    &\geq&
     F_{n,lin,\bw^{(s)}}(\bw^{(s)})
    -
    F_{n,lin,\bw^{(s)}}(\bw^*)
    \\
    &=&
     F_{n}(\bw^{(s)})
    -
    F_{n,lin,\bw^{(s)}}(\bw^*).
    \end{eqnarray*}
    Hence
    \begin{eqnarray*}
      &&
\| \bw^{(s+1)} - \bw^{*}\|^2
\\
&&
=
\left\|
\bw^{(s)}
-
\lambda
\cdot \nabla_{\bw} F_n( \bw^{(s)} )
- \bw^{*}
\right\|^2
\\
&&
=
\left\|
\bw^{(s)}
- \bw^{*}
\right\|^2
-
2 \cdot \lambda \cdot
    <
    \nabla_\bw F_{n}(\bw^{(s)}),
    \bw^{(s)} - \bw^*
    >
+
\lambda^2
\cdot
\left\|
 \nabla_{\bw} F_n( \bw^{(s)} )
\right\|^2
\\
&&
\leq
\left\|
\bw^{(s)}
- \bw^{*}
\right\|^2
-
2 \cdot \lambda
\left(
     F_{n}(\bw^{(s)})
    -
    F_{n,lin,\bw^{(s)}}(\bw^*)
    \right)
    \\
    &&
    \quad
    + 2 \cdot \lambda \cdot (F_{n}(\bw^{(s)})
    -F_{n}(\bw^{(s+1)}))
\\
&&
=
\left\|
\bw^{(s)}
- \bw^{*}
\right\|^2
-
2 \cdot \lambda
\left(
     F_{n}(\bw^{(s+1)})
    -
    F_{n,lin,\bw^{(s)}}(\bw^*)
    \right)
    \\
    &&
    \leq
    \left\|
    \bw^{(s)}
- \bw^{*}
\right\|^2.
    \end{eqnarray*}
    Here we have used (\ref{le1eq1}) in the first inequality and (\ref{le2eq1})
    in the second inequality.
    \hfill $\Box$

    \noindent
        {\bf Proof of Lemma \ref{le1}.}
        Set
  \[
  F_{n,lin,\bw}(\bw^*)
  =
\frac{1}{n} \sum_{i=1}^n |Y_i - f_{lin,\bw,\bw^*}(X_i)|^2 . 
\]

In the {\it first step of the proof} we show
that for any
$\bw \in \R^{J_n}$ with
  $
  \| \bw^* - \bw\|
  \leq
  \| \bw^* - \bw^{(0)}\|,
  $
  we have
  \begin{equation}
    \label{ple1eq1}
    \left|
  F_{n,lin,\bw}(\bw^*)
-  F_{n}(\bw^*)
    \right|
    \leq
    5 \cdot \kappa_n \cdot C_n \cdot \| \bw^* - \bw \|^2.
    \end{equation}
  Using (\ref{le1eq2})--(\ref{le1eq5}) we get
  \begin{eqnarray*}
    &&
    \left|
  F_{n,lin,\bw}(\bw^*)
-  F_{n}(\bw^*)
    \right|
    \\
    &&
    =
        \left|
  \frac{1}{n} \sum_{i=1}^n |Y_i - f_{lin,\bw,\bw^*}(X_i)|^2
-  \frac{1}{n} \sum_{i=1}^n |Y_i - f_{\bw^*}(X_i)|^2
    \right|
    \\
    &&
    \leq
    \frac{1}{n} \sum_{i=1}^n
    | 2 \cdot Y_i - 2 \cdot f_{\bw^*}(X_i) +  f_{\bw^*}(X_i) -    f_{lin,\bw,\bw^*}(X_i)|
    \cdot | f_{lin,\bw,\bw^*}(X_i) - f_{\bw^*}(X_i)|
    \\
    &&
    \leq
    \frac{1}{n} \sum_{i=1}^n (2 \cdot \kappa_n + 2 \cdot \kappa_n + C_n \cdot \|\bw^*-\bw\|^2)
    \cdot C_n \cdot \|\bw^*-\bw\|^2
    \\
    &&
    \leq
    5 \cdot \kappa_n \cdot C_n \cdot \|\bw^*-\bw\|^2.
    \end{eqnarray*}

  In the {\it second step of the proof} we show that the assertion holds
  in case that we have for some $s \in \{0,1, \dots, t_n-1\}$
  \begin{equation}
    \label{ple1eq2}
F_n( \bw^{(s+1)}) < F_{n,lin,\bw^{(s)}}(\bw^*).
  \end{equation}

  So assume that (\ref{ple1eq2}) holds for some $s \in \{0,1, \dots, t_n-1\}$.
  By choosing $s$ minimal with this property we can assume
  \[
F_n( \bw^{(t+1)}) \geq F_{n,lin,\bw^{(t)}}(\bw^*)
  \]
  for all $t \in \{0,1, \dots, s-1\}$. By Lemma \ref{le2}
  we can conclude
  \[
\| \bw^{(s)} - \bw^{*}\|
\leq
\| \bw^{(0)} - \bw^{*}\|.
\]
Furthermore, we know by assumption (\ref{le1eq1})
\[
F_n( \bw^{(t_n)}) \leq F_n( \bw^{(s+1)}).
\]
By using the result of the first step of the proof we get
\begin{eqnarray*}
  F_n (\bw^{(t_n)})
  & \leq &
  F_n( \bw^{(s+1)})
  \\
  &
  <&
  F_{n,lin,\bw^{(s)}}(\bw^*)
  \\
  &
  =&
  F_n( \bw^*) +  F_{n,lin,\bw^{(s)}}(\bw^*) - F_n( \bw^*)
  \\
  &
  \leq&
  F_n(\bw^*) +     5 \cdot \kappa_n \cdot C_n \cdot \| \bw^* - \bw^{(s)} \|^2
  \\
  &
  \leq&
  F_n(\bw^*) +     5 \cdot \kappa_n \cdot C_n \cdot \| \bw^* - \bw^{(0)} \|^2
.
  \end{eqnarray*}

In the {\it third step of the proof} we show
the assertion in case that (\ref{ple1eq2}) does not hold for all
$s \in \{0,1, \dots, t_n-1\}$.

In this case we have
\[
F_n( \bw^{(s+1)}) \geq F_{n,lin,\bw^{(s)}}(\bw^*)
\]
for all $s \in \{0,1, \dots, t_n-1\}$, so by Lemma \ref{le2}
we know
\[
\| \bw^{(t)} - \bw^{*}\|
\leq
\| \bw^{(0)} - \bw^{*}\|
\]
for all $t \in \{0,1, \dots, t_n\}$.

Using (\ref{le1eq1}) and the result of the first step of the proof we
conclude
\begin{eqnarray*}
  &&
  F_n( \bw^{(t_n)}) - F_n (\bw^*)
  \\
  &&
  \leq
  \frac{1}{t_n} \sum_{t=0}^{t_n-1}  F_n( \bw^{(t)}) - F_n (\bw^*)
  \\
  &&
  =
  \frac{1}{t_n} \sum_{t=0}^{t_n-1}  \left( F_n( \bw^{(t)}) -
F_{n,lin, \bw^{(t)}} (\bw^*)
  \right)
  +
  \frac{1}{t_n} \sum_{t=0}^{t_n-1}  \left(
  F_{n,lin, \bw^{(t)}} (\bw^*)
    - F_n (\bw^*)
    \right)
    \\
    &&
    \leq
     \frac{1}{t_n} \sum_{t=0}^{t_n-1}  \left( F_n( \bw^{(t)}) -
F_{n,lin, \bw^{(t)}} (\bw^*)
  \right)
  +
  \frac{1}{t_n} \sum_{t=0}^{t_n-1}
  5 \cdot \kappa_n \cdot C_n \cdot \| \bw^* - \bw^{(t)} \|^2
    \\
    &&
    \leq
     \frac{1}{t_n} \sum_{t=0}^{t_n-1}  \left( F_n( \bw^{(t)}) -
F_{n,lin, \bw^{(t)}} (\bw^*)
  \right)
  +
  5 \cdot \kappa_n \cdot C_n \cdot \| \bw^* - \bw^{(0)} \|^2.
  \end{eqnarray*}
Since
    \[
    F_{n,lin,\bw^{(t)}}(\bw^{(t)})=     F_{n}(\bw^{(t)})
    \quad \mbox{and} \quad
    \nabla_\bw F_{n,lin,\bw^{(t)}}(\bw^{(t)})=     \nabla_\bw F_{n}(\bw^{(t)})
    \]
    and $ \bw \mapsto  F_{n,lin,\bw^{(t)}}(\bw)$ is convex
    and differentiable we get furthermore
    \begin{eqnarray*}
      &&
     \frac{1}{t_n} \sum_{t=0}^{t_n-1}  \left( F_n( \bw^{(t)}) -
F_{n,lin, \bw^{(t)}} (\bw^*)
  \right)
  \\
  &&
  =
       \frac{1}{t_n} \sum_{t=0}^{t_n-1}  \left( F_{n,lin, \bw^{(t)}}( \bw^{(t)}) -
F_{n,lin, \bw^{(t)}} (\bw^*)
  \right)
  \\
  &&
  \leq
  \frac{1}{t_n} \sum_{t=0}^{t_n-1}
  <
\nabla_\bw F_{n,lin, \bw^{(t)}}
( \bw^{(t)}), \bw^{(t)} -  \bw^{*}
>
\\
&&
=
  \frac{1}{t_n} \sum_{t=0}^{t_n-1}
  <
\nabla_\bw F_{n}
( \bw^{(t)}), \bw^{(t)} - \bw^{*}
>
\\
&&
=
\frac{1}{2 \cdot \lambda \cdot t_n} \sum_{t=0}^{t_n-1}
2 \cdot
  <
\lambda \cdot \nabla_\bw F_{n}
( \bw^{(t)}), \bw^{(t)} -  \bw^{*}
>
\\
&&
=
\frac{1}{2 \cdot \lambda \cdot t_n} \sum_{t=0}^{t_n-1}
\Bigg(
\| \bw^{(t)} -  \bw^{*} \|^2
-
\|
 \bw^{(t)} -  \bw^{*} -\lambda \cdot \nabla_\bw F_{n}
( \bw^{(t)})
 \|^2
 \\
 &&
 \hspace*{3cm}
 +
 \lambda^2 \cdot \|  \nabla_\bw F_{n}
( \bw^{(t)})\|^2
 \Bigg)
 \\
 &&
 =\frac{1}{2 \cdot \lambda \cdot t_n} \sum_{t=0}^{t_n-1}
\left(
\| \bw^{(t)} -  \bw^{*} \|^2
-
\|
 \bw^{(t+1)} - \bw^*
 \|^2
 \right)
 +
 \frac{\lambda}{2 \cdot t_n} \sum_{t=0}^{t_n-1} \|  \nabla_\bw F_{n}
( \bw^{(t)})\|^2
 \\
 &&
 \leq\frac{1}{2 \cdot \lambda \cdot t_n}
 \cdot
 \left(
\| \bw^{(0)} -  \bw^{*} \|^2
-
\|
 \bw^{(t_n)} - \bw^*
 \|^2
 \right)
 +
 \frac{1}{t_n} \sum_{t=0}^{t_n-1} ( F_n (\bw^{(t)}) - F_n(\bw^{(t+1)}))
 \\
 &&
 \leq
 \frac{1}{2 \cdot \lambda \cdot t_n}
 \cdot
 \| \bw^{(0)} -  \bw^{*} \|^2
 +
 \frac{
 F_n (\bw^{(0)})
 }{t_n},
      \end{eqnarray*}
      where the second to last inequality follows from (\ref{le1eq1}).
        \hfill $\Box$

Next we investigate when our topology of the deep neural network  satisfies the assumptions of Lemma \ref{le1}. The following localization lemma for gradient descent proven in Braun et al. (2024) helps with inequality (\ref{le1eq1}).

\begin{lemma}
  \label{le3}
    Let
  $F:\R^K \rightarrow \R_+$
  be a nonnegative differentiable function.
  Let
  $t \in \N$, $\bar L>0$, $\ba_0 \in \R^K$, choose
  \[
  0< \lambda \leq
\frac{1}{\bar L}
\]
and set
\[
\ba_{k+1}=\ba_k - \lambda \cdot (\nabla_{\ba} F)(\ba_k)
\quad
(k \in \{0,1, \dots, t-1\}).
\]
Assume
\begin{equation}
  \label{le3eq1}
  \left\|
 (\nabla_{\ba} F)(\ba)
  \right\|
  \leq
  \sqrt{
2 \cdot t \cdot\bar L \cdot \max\{ F(\ba_0),1 \}
    }
\end{equation}
for all $\ba \in \R^K$ with
$\| \ba - \ba_0\| \leq \sqrt{2 \cdot t \cdot \max\{ F(\ba_0),1 \} /\bar L}$,
and
\begin{equation}
  \label{le3eq2}
\left\|
(\nabla_{\ba} F)(\ba)
-
(\nabla_{\ba} F)(\bb)
  \right\|
  \leq
  \bar L \cdot \|\ba - \bb \|
\end{equation}
for all $\ba, \bb \in \R^K$ satisfying
\begin{equation}
  \label{le3eq3}
  \| \ba - \ba_0\| \leq \sqrt{8 \cdot \frac{t}{\bar L} \cdot \max\{ F(\ba_0),1 \}}
  \quad \mbox{and} \quad
  \| \bb - \ba_0\| \leq \sqrt{8 \cdot \frac{t}{\bar L} \cdot \max\{ F(\ba_0),1 \}}.
\end{equation}
Then we have
\[
\|\ba_k-\ba_0\| \leq
\sqrt{
2 \cdot \frac{k}{\bar L} \cdot (F(\ba_0)-F(\ba_k))
}
\quad
 \mbox{for all }
 k \in \{1, \dots,t\},
\]
\[
\sum_{k=0}^{s-1}
\| \ba_{k+1}-\ba_k \|^2
\leq
\frac{2}{\bar L}
 \cdot (F(\ba_0)-F(\ba_s))
\quad
 \mbox{for all }
 s \in \{1, \dots,t\}
 \]
 and
 \[
 F(\ba_k) \leq F(\ba_{k-1}) - \frac{\lambda}{2}
 \cdot
\left\| (\nabla_{\ba} F)(\ba_{k-1}) \right\|^2
 \quad
 \mbox{for all }
 k \in \{1, \dots,t\}.
 \]
\end{lemma}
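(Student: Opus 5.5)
We prove Lemma \ref{le3} by induction over $k$, showing at the same time that all iterates $\ba_0,\dots,\ba_t$ stay in the ball
\[
B=\left\{\ba:\ \|\ba-\ba_0\|\leq \sqrt{2\, t\, \max\{F(\ba_0),1\}/\bar L}\right\}.
\]
On $B$ the gradient bound (\ref{le3eq1}) holds, and on the larger ball of radius $\sqrt{8 t \max\{F(\ba_0),1\}/\bar L}$ (twice the radius of $B$) the Lipschitz bound (\ref{le3eq2}) holds. The three assertions are then the standard descent-lemma consequences of $\bar L$-smoothness with step size $\lambda\le 1/\bar L$. The only non-routine point is that smoothness is available only locally, so each step must be shown to stay inside the region where (\ref{le3eq2}) applies.

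The induction hypothesis for $k-1$ is: $\ba_0,\dots,\ba_{k-1}\in B$, and the descent inequality holds for all indices up to $k-1$.

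\emph{Step segment stays in the smooth region.} Since $\ba_{k-1}\in B$, (\ref{le3eq1}) gives $\|\ba_k-\ba_{k-1}\|=\lambda\|\nabla F(\ba_{k-1})\|\le \frac{1}{\bar L}\sqrt{2t\bar L\max\{F(\ba_0),1\}}$, which is exactly the radius of $B$. Hence the whole segment $[\ba_{k-1},\ba_k]$ lies within distance $2\sqrt{2t\max\{F(\ba_0),1\}/\bar L}=\sqrt{8t\max\{F(\ba_0),1\}/\bar L}$ of $\ba_0$, i.e.\ inside the region (\ref{le3eq3}). This is precisely why the constant $8$ appears there.

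\emph{Descent inequality.} Applying (\ref{le3eq2}) along the segment, the fundamental theorem of calculus gives
\[
F(\ba_k)\le F(\ba_{k-1})-\lambda\|\nabla F(\ba_{k-1})\|^2+\frac{\bar L\lambda^2}{2}\|\nabla F(\ba_{k-1})\|^2\le F(\ba_{k-1})-\frac{\lambda}{2}\|\nabla F(\ba_{k-1})\|^2 ,
\]
using $\lambda\le 1/\bar L$. This is the third claim for index $k$.

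\emph{Telescoping.} Since $\|\ba_{j+1}-\ba_j\|^2=\lambda^2\|\nabla F(\ba_j)\|^2\le \frac{2\lambda}{2}\,\lambda\|\nabla F(\ba_j)\|^2\le \frac{2}{\bar L}\cdot\frac{\lambda}{2}\|\nabla F(\ba_j)\|^2\le\frac{2}{\bar L}(F(\ba_j)-F(\ba_{j+1}))$, summing over $j<s\le k$ yields the second claim.

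\emph{Distance bound.} By Cauchy--Schwarz,
\[
\|\ba_k-\ba_0\|\le\sum_{j<k}\|\ba_{j+1}-\ba_j\|\le\sqrt{k\sum_{j<k}\|\ba_{j+1}-\ba_j\|^2}\le\sqrt{2k(F(\ba_0)-F(\ba_k))/\bar L},
\]
which is the first claim.

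\emph{Closing the induction.} Because $F\ge0$ and $k\le t$, the first claim gives $\|\ba_k-\ba_0\|\le\sqrt{2t F(\ba_0)/\bar L}$, so $\ba_k\in B$. This completes the induction step; the base case $\ba_0\in B$ is trivial.

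The main obstacle is the bookkeeping just described: the descent inequality requires (\ref{le3eq2}) on the segment, which requires the iterate to lie in $B$, which in turn requires the descent inequality at earlier steps. Ordering the induction as above (membership in $B$, then segment control, then descent, then distance bound) resolves this circularity.
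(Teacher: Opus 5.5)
Your proof is correct, and it is the standard argument for this lemma. The paper gives no proof of its own and simply cites Lemma A.1 of Braun et al.\ (2024), which, as far as I recall, runs the same induction: the iterates are kept in the ball of radius $\sqrt{2t\max\{F(\ba_0),1\}/\bar L}$, the one-step bound from (\ref{le3eq1}) places each segment $[\ba_{k-1},\ba_k]$ in the doubled ball where (\ref{le3eq2}) holds (hence the constant $8$), and the descent lemma with $\lambda\le 1/\bar L$, telescoping and Cauchy--Schwarz then give the three claims and close the induction.
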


\noindent
    {\bf Proof.} See Lemma A.1 in Braun et al. (2024).
    \hfill $\Box$
    
We use the following two lemmata from Kohler (2026) to verify assumptions (\ref{le3eq1}) and (\ref{le3eq2}) of Lemma \ref{le3}.
 \begin{lemma}
      \label{le3Ko24}
Let $\sigma: \R \rightarrow \R$ be the logistic squasher, let $L,r,K_n\in\N$,
let $f_{\bw}$ be defined by (\ref{se2eq1}) - (\ref{se2eq3}) and let
$F_n$ be defined by (\ref{se2eq5}).
Let $a \geq 1$,
$\gamma_n^* \geq 1$, $B_n \geq 1$, $\kappa_n \geq 1$  and assume
$X_i \in [-a,a]^d$, $|Y_i| \leq \kappa_n$ $(i=1, \dots, n)$,
\[
  |w_{1,1,k}^{(L)}| \leq \gamma_n^*\text{ for } k=1,\dots,K_n,
\]
\[
  |w_{k,i,j}^{(l)}| \leq B_n \text{ for }l = 1, \dots, L-1 \text{ and all } k,i,j
\]
and
\[
  K_n \cdot \gamma_n^* \geq \kappa_n.
\]

Then
\[
\| \nabla_\bw F_n(\bw) \| \leq \const[c1le3Ko24] \cdot
K_n^{3/2} \cdot (\gamma_n^*)^2 \cdot B_n^L 
  \]
    for some $\const[c1le3Ko24]=\const[c1le3Ko24](d,L,r,a)>0$.
      \end{lemma}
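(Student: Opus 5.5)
We bound the gradient directly via the chain rule. Since $F_n(\bw)=\frac1n\sum_{i=1}^n|Y_i-f_\bw(X_i)|^2$, we have
\[
\nabla_\bw F_n(\bw) = -\frac{2}{n}\sum_{i=1}^n (Y_i - f_\bw(X_i))\cdot \nabla_\bw f_\bw(X_i),
\]
so $\|\nabla_\bw F_n(\bw)\| \leq 2 \max_i |Y_i-f_\bw(X_i)| \cdot \max_i \|\nabla_\bw f_\bw(X_i)\|$. The first factor is controlled easily. Because $0\le\sigma\le 1$, every $f_{k,1}^{(L)}$ lies in $[0,1]$. Hence
\[
|f_\bw(X_i)|\le K_n\gamma_n^*,
\qquad
|Y_i - f_\bw(X_i)|\leq \kappa_n + K_n\gamma_n^* \leq 2K_n\gamma_n^*,
\]
where the last step uses the assumption $K_n\gamma_n^*\ge\kappa_n$.

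It remains to show $\|\nabla_\bw f_\bw(x)\| \le c\, K_n^{1/2}\gamma_n^* B_n^{L}$ for $x\in[-a,a]^d$. Combined with the bound above, this gives the claimed $K_n^{3/2}(\gamma_n^*)^2B_n^L$. We split the gradient by weight type.

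\emph{Output weights.} The derivative with respect to $w_{1,1,k}^{(L)}$ equals $f_{k,1}^{(L)}(x)\in[0,1]$. These contribute at most $\sqrt{K_n}$ to the norm.

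\emph{Inner weights of subnetwork $k$.} The derivative with respect to $w_{k,i,j}^{(l)}$ equals $w_{1,1,k}^{(L)}\cdot \partial f_{k,1}^{(L)}/\partial w_{k,i,j}^{(l)}$, which is bounded by $\gamma_n^*\cdot|\partial f_{k,1}^{(L)}/\partial w_{k,i,j}^{(l)}|$. Backpropagation through the subnetwork uses three facts:
\begin{itemize}[label={}]
\item $|\sigma'|\le 1/4$;
\item each backward step through layers $l'=1,\dots,L-1$ multiplies by a sum of at most $r$ weights bounded by $B_n$;
\item the factor from the weight itself is a neuron output in $[0,1]$, or $|x^{(j)}|\le a$ or $1$ for layer~$0$.
\end{itemize}
So each such partial derivative is bounded by $c(r,a,L)\,B_n^{L-1}\le c\,B_n^L$ (using $B_n\ge1$). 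Crucially, layer-$0$ weights are never multiplied as a backward factor: they only appear inside $\sigma$. So no bound on them is needed, which is why the lemma imposes none. Each subnetwork has a fixed number $c(d,L,r)$ of inner weights. Therefore
\[
\text{inner-weight contribution to }\|\nabla_\bw f_\bw(x)\|^2 \;\leq\; K_n\cdot c\,(\gamma_n^*)^2 B_n^{2L}.
\]

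Summing the two contributions and taking square roots yields $\|\nabla_\bw f_\bw(x)\|\le c\sqrt{K_n}\,\gamma_n^* B_n^L$, using $\gamma_n^*,B_n\ge1$. Multiplying by $2\cdot 2K_n\gamma_n^*$ gives the assertion. The only delicate point is the backward-chain bookkeeping: we must check that the power of $B_n$ is at most $L$, and that the first-layer weights, which may be as large as $(\log n) n^\tau$, enter only through $\sigma'$ and never as multiplicative factors. A short induction over $l$ on the size of $\partial f_{k,i}^{(l')}/\partial w$ handles this cleanly.
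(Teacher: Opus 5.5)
Your argument is correct: the residual bound $|Y_i-f_\bw(X_i)|\le 2K_n\gamma_n^*$, the chain-rule bound of order $\gamma_n^* B_n^{L-1}$ on each inner partial derivative (layer-$0$ weights enter only through $\sigma'$), and the count of $O(K_n)$ weights together give $K_n^{3/2}(\gamma_n^*)^2B_n^{L}$. The paper gives no proof of its own here and simply cites Lemma 3 in Kohler (2026); that bound is of exactly this direct chain-rule kind, so your route is essentially the standard one.
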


\noindent
    {\bf Proof.} See Lemma 3 in Kohler (2026).
    \hfill $\Box$

    \begin{lemma}
      \label{le5Ko24}
      Let $\sigma: \R \rightarrow \R$ be the logistic squasher, let $L,r,K_n\in\N$,
      let $f_{\bw}$ be defined by (\ref{se2eq1}) - (\ref{se2eq3}) and let
      $F_n$ be defined by (\ref{se2eq5}).
     Let $a \geq 1$,
     $\gamma_n^* \geq 1$, $B_n \geq 1$, $\kappa_n \geq 1$  and assume
     $X_i \in [-a,a]^d$, $|Y_i| \leq \kappa_n$ $(i=1, \dots, n)$,
     \begin{equation}
     	\label{le5Ko24eq1}
     	\max\{ |(\bw_1)_{1,1,k}^{(L)}|, |(\bw_2)_{1,1,k}^{(L)}|\} \leq \gamma_n^* \text { for } k=1,
     	\dots, K_n,
     \end{equation}
     \begin{equation}
     	\label{le5Ko24eq2}
     	\max\{|(\bw_1)_{k,i,j}^{(l)}|,|(\bw_2)_{k,i,j}^{(l)}|\} \leq B_n
     	\quad
        \mbox{for } l=1, \dots, L-1 \text{ and all } k,i,j
     \end{equation}
     and
  \[
K_n \cdot \gamma_n^* \geq \kappa_n.
  \]
     Then we have
     \begin{eqnarray*}
     	&&
     	\| (\nabla_\bw F_n)(\bw_1) - (\nabla_\bw F_n)(\bw_2) \|
     	\leq
     	\const[c1le5Ko24]   \cdot K_n^{3/2} \cdot B_n^{2L} \cdot (\gamma_n^*)^2 \cdot \|\bw_1-\bw_2\|
     \end{eqnarray*}
     for some $\const[c1le5Ko24]=\const[c1le5Ko24](d,L,r,a)>0$.
\end{lemma}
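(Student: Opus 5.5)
We prove the Lipschitz bound for $\nabla_\bw F_n$ by writing the gradient explicitly and controlling each factor uniformly in $\bw$ over the box given by (\ref{le5Ko24eq1}) and (\ref{le5Ko24eq2}). The input-layer weights $w^{(0)}$ are not bounded by the assumptions.

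\textbf{Step 1: explicit gradient and splitting.} We have
\[
\nabla_\bw F_n(\bw) = -\frac{2}{n}\sum_{i=1}^n (Y_i - f_\bw(X_i)) \cdot \nabla_\bw f_\bw(X_i).
\]
So
\[
\nabla F_n(\bw_1)-\nabla F_n(\bw_2)
= -\frac{2}{n}\sum_i \Big[(f_{\bw_2}-f_{\bw_1})(X_i)\,\nabla f_{\bw_1}(X_i) + (Y_i-f_{\bw_2}(X_i))\,(\nabla f_{\bw_1}-\nabla f_{\bw_2})(X_i)\Big].
\]
We need three uniform bounds on the box, valid for every $x\in[-a,a]^d$:
\begin{itemize}
\item[(i)] $|f_\bw(x)|\le K_n\gamma_n^*$. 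This holds since $0<\sigma<1$.
\item[(ii)] $\|\nabla_\bw f_\bw(x)\| \le c\,K_n^{1/2}\gamma_n^* B_n^{L}$.
\item[(iii)] $\|\nabla_\bw f_{\bw_1}(x)-\nabla_\bw f_{\bw_2}(x)\| \le c\,K_n^{1/2}\gamma_n^* B_n^{2L}\|\bw_1-\bw_2\|$.
\end{itemize}
Since the box is convex, the mean value theorem applied to (ii) gives
\[
|f_{\bw_1}(x)-f_{\bw_2}(x)|\le cK_n^{1/2}\gamma_n^*B_n^L\|\bw_1-\bw_2\|.
\]
Combining with $|Y_i|\le\kappa_n\le K_n\gamma_n^*$, the first term is bounded by $cK_n\gamma_n^{*2}B_n^{2L}\|\bw_1-\bw_2\|$. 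The second term is bounded by $2K_n\gamma_n^*\cdot cK_n^{1/2}\gamma_n^*B_n^{2L}\|\bw_1-\bw_2\|$. Together this gives the claimed $K_n^{3/2}B_n^{2L}(\gamma_n^*)^2$.

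\textbf{Step 2: bound (ii) per subnetwork.} The partial derivative with respect to an output weight $w^{(L)}_{1,1,k}$ is $f^{(L)}_{k,1}\in(0,1)$. These $K_n$ entries contribute at most $K_n^{1/2}$ to the norm.

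For an inner weight of subnetwork $k$, the derivative equals $w^{(L)}_{1,1,k}$ times a backpropagation product. Each factor in that product is of the form $\sigma'(\cdot)\,w^{(l)}$ with $l\ge1$, hence bounded by $B_n/4$. The final factor is either $x^{(j)}$ (bounded by $a$), a constant $1$, or an activation in $(0,1)$. Summing over the at most $r^{L}$ paths and the bounded number of weights per subnetwork gives a bound $c\,\gamma_n^* B_n^{L-1}$ per subnetwork. Squaring and summing over $k$ yields $cK_n^{1/2}\gamma_n^*B_n^{L}$.

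The key point is that $w^{(0)}$ never appears as a multiplicative factor in the gradient. It enters only inside arguments of $\sigma$ and $\sigma'$, whose values are bounded by $1$ and $1/4$ regardless of those arguments.

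\textbf{Step 3: bound (iii), the main obstacle.} Here we must differentiate the backpropagation product once more, i.e.\ bound the Hessian of $f_\bw(x)$ in $\bw$ on the box and then apply the mean value theorem.

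Hessian entries mixing different subnetworks vanish. Entries involving an output weight $w^{(L)}_{1,1,k}$ together with an inner weight of subnetwork $k$ are bounded by $cB_n^{L-1}$. Entries with two inner weights of subnetwork $k$ equal $w^{(L)}_{1,1,k}$ times a sum of products. These products contain $\sigma''$ (bounded by $1$), at most about $2L$ inner-weight factors each bounded by $B_n$, and factors $x^{(j)}$ (bounded by $a$).

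The danger is that differentiating $\sigma'(\text{input to layer }1)$ with respect to $w^{(0)}$ produces $x^{(j)}$, not $w^{(0)}$. Therefore the unbounded input weights again enter only through $\sigma$, $\sigma'$ and $\sigma''$. This is what permits $w^{(0)}$ to be unconstrained; we must check it carefully for every type of weight pair.

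Counting yields a block-diagonal Hessian with blocks of norm $\le c\,\gamma_n^*B_n^{2L}$. Hence the Hessian operator norm is $\le c\gamma_n^*B_n^{2L}$, which is even better than the $K_n^{1/2}$ in (iii) and suffices. All constants depend only on $d,L,r,a$, completing the proof.
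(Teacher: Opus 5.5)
Your argument is correct. The paper does not prove this lemma itself; it only refers to Lemma 5 in Kohler (2026). What you give is therefore a self-contained alternative rather than a reconstruction of the paper's argument.

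You split $\nabla_{\bw} F_n(\bw_1)-\nabla_{\bw} F_n(\bw_2)$ into two terms: $(f_{\bw_2}-f_{\bw_1})(X_i)\,\nabla_{\bw} f_{\bw_1}(X_i)$ and $(Y_i-f_{\bw_2}(X_i))\,(\nabla_{\bw} f_{\bw_1}-\nabla_{\bw} f_{\bw_2})(X_i)$. You control the second through the Hessian of $\bw\mapsto f_{\bw}(x)$ along the segment $[\bw_1,\bw_2]$, which stays in the convex weight box. This is the same block-diagonal/Frobenius mechanism the paper uses in Lemma \ref{le5} for the Taylor remainder, now with the dependence on $\gamma_n^*$ and $B_n$ tracked explicitly.

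Your key observation is right: the unbounded layer-$0$ weights never appear as multiplicative factors. The derivative of a first-layer pre-activation with respect to $w^{(0)}_{k,i,j}$ is $x^{(j)}$ or $1$, so these weights only enter the arguments of $\sigma,\sigma',\sigma''$. The one piece you still have to write out is the layer-by-layer induction. It should show that second derivatives of $f^{(m)}_{k,i}$ with respect to two weights of subnetwork $k$ are bounded by $c\,B_n^{2(L-1)}$, uniformly in the layer-$0$ weights.

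Your route also gives something extra. Each Hessian block has spectral norm at most $c\,\gamma_n^* B_n^{2L}$, with no factor $K_n^{1/2}$. Hence both terms are in fact of order $K_n(\gamma_n^*)^2B_n^{2L}\|\bw_1-\bw_2\|$. You thus obtain the sharper Lipschitz constant $c\,K_n(\gamma_n^*)^2B_n^{2L}$, which implies the stated bound.

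A minor slip: the estimate $|Y_i|\le\kappa_n\le K_n\gamma_n^*$ is needed for the second term, not the first.
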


\noindent
    {\bf Proof.} See Lemma 5 in Kohler (2026).
    \hfill $\Box$

The following two lemmata consider inequality (\ref{le1eq2}) for the special topology of our networks.

\begin{lemma}
\label{le4} 
Let 
$f_\bw:\R^d \rightarrow \R$
be a deep neural network defined by (\ref{se2eq1}) - (\ref{se2eq3}) with weight vector
\[
\bw=(w_j)_{j=1, \dots, J_n} \in \R^{J_n}
\]
and a twice differentiable activation function,
and denote the linear Taylor polynomial  of $f_{\bw}$ around $\bw_0$ by
  \[
  f_{lin,\bw_0,\bw}(x)
  =
  f_{\bw_0}(x) + \sum_{j=1}^{J_n}
  \frac{\partial}{\partial w_j} f_{\bw_0}(x) \cdot
    (w_{j}-(\bw_{0})_j).
    \]
Then for every $x\in\R^d$ there exists $\xi \in [0,1]$ such that
\[
| f_{lin, \bw_0,\bw}(x) - f_\bw (x)|
\leq
\frac{1}{2}
\cdot
\| \bH( \bw_0 + \xi \cdot (\bw-\bw_0)) \|_2 \cdot \| \bw-\bw_0 \|^2,
\]
where 
\[
\bH (\bw)
=
\left(
\frac{\partial^2 f_\bw (x)}{
\partial w_i \partial w_j
}
\right)_{
1 \leq i,j \leq J_n
}
\]
is the Hessian matrix of $f_\bw (x)$ and
\[
\left\|
\bH (\bw)
\right\|
_2
=
\sup_{
\tilde{\bw} \in \R^{J_n}: \tilde{\bw} \neq 0
}
\frac{
\|  \bH (\bw) \tilde{\bw}\|
}{
\| \tilde{\bw} \|
}
\]
denotes its spectral norm.
\end{lemma}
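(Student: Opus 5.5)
Fix $x\in\R^d$ and reduce the statement to a one-dimensional Taylor expansion with Lagrange remainder. Define
\[
g(u) = f_{\bw_0 + u\cdot(\bw-\bw_0)}(x), \quad u \in [0,1].
\]
Since the activation function is twice differentiable, every $f_{\bw}(x)$ given by (\ref{se2eq1})--(\ref{se2eq3}) is a finite composition of affine maps in $\bw$, products, and the activation function. Hence $\bw \mapsto f_\bw(x)$ is twice differentiable on $\R^{J_n}$, and $g$ is twice differentiable on $[0,1]$ by the chain rule.

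Next compute the derivatives of $g$:
\[
g'(u) = \sum_{j=1}^{J_n} \frac{\partial f_{\bw_0+u(\bw-\bw_0)}(x)}{\partial w_j}\,(w_j-(\bw_0)_j),
\]
\[
g''(u) = (\bw-\bw_0)^T\, \bH(\bw_0+u(\bw-\bw_0))\,(\bw-\bw_0).
\]
From these, $g(0)=f_{\bw_0}(x)$, $g(1)=f_\bw(x)$, and $g(0)+g'(0)=f_{lin,\bw_0,\bw}(x)$.

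Taylor's theorem with Lagrange remainder (which needs only twice differentiability of $g$) gives some $\xi\in[0,1]$ with $g(1)=g(0)+g'(0)+\tfrac12 g''(\xi)$. Therefore
\[
|f_{lin,\bw_0,\bw}(x)-f_\bw(x)| = \tfrac12 \left|(\bw-\bw_0)^T \bH(\bw_0+\xi(\bw-\bw_0))(\bw-\bw_0)\right|.
\]
By Cauchy--Schwarz and the definition of the spectral norm, the quadratic form is bounded in absolute value by $\|\bw-\bw_0\|\cdot\|\bH(\cdot)(\bw-\bw_0)\| \le \|\bH(\cdot)\|_2 \|\bw-\bw_0\|^2$, which is the claimed inequality. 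If $\bw=\bw_0$ the statement is trivial.

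The only delicate point is the regularity of $g$. The Lagrange form of the remainder requires just the existence of $g''$ on $(0,1)$ and continuity of $g'$ on $[0,1]$, and both follow from twice differentiability of $\sigma$ via the chain rule. For the logistic squasher used in the paper, $\sigma$ is $C^\infty$, so this step poses no difficulty.
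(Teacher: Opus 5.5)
Your proof is correct and follows essentially the same route as the paper. You restrict to the segment via $g(u)=f_{\bw_0+u(\bw-\bw_0)}(x)$, apply the second-order Taylor formula with Lagrange remainder, identify $g''(\xi)$ as the Hessian quadratic form, and bound it by Cauchy--Schwarz and the spectral norm; your added remark on the regularity the Lagrange remainder needs spells out what the paper leaves implicit.
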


\noindent
{\bf Proof.}
For $x\in\R^d$ and $s \in [0,1]$ define
\[
F(s)= f_{\bw_0 + s \cdot (\bw-\bw_0)}(x).
\]
Then the chain rule and the formula for Taylor polynomials of order 2 imply that for some
$\xi \in [0,1]$ we have
\begin{eqnarray*}
&&
|f_\bw (x)- f_{lin, \bw_0,\bw}(x) |
\\
&&
= | F(1)-F(0) - F^\prime(0) \cdot (1-0)|
\\
&&
=
\left|
\frac{1}{2}
\cdot
F^{\prime \prime}(\xi) \cdot (1-0)^2
\right|
\\
&&
=
\left|
\frac{1}{2} \cdot
(\bw-\bw_0)^T
\cdot
 \bH( \bw_0 + \xi \cdot (\bw-\bw_0)) 
\cdot (\bw-\bw_0)
\right|
\\
&&
\leq
\frac{1}{2} \cdot
\left\|
\bw-\bw_0
\right\|
\cdot
\left\|
 \bH( \bw_0 + \xi \cdot (\bw-\bw_0)) 
\cdot (\bw-\bw_0)
\right\|
\\
&&
\leq
\frac{1}{2} 
\cdot
\left\|
 \bH( \bw_0 + \xi \cdot (\bw-\bw_0)) 
\right\|_2
\cdot
\left\|
\bw-\bw_0
\right\|^2.
\end{eqnarray*}
\hfill $\Box$

\begin{lemma}
\label{le5}
Let $\sigma$ be the logistic squasher and define $f_\bw$ by (\ref{se2eq1}) - (\ref{se2eq3}).
Let $\const[c1le5]>0$ and assume
\[
|  w_{k,i,j}^{(l)} | \leq \const[c1le5]
\]
for all $l \in \{1, \dots, L \}$ and all $k,i,j$.
Let $R >0$. Then we have for all $x \in \R^d$ with $\|x\| \leq R$
\[
\| \bH (\bw) \|_2 \leq \const[c2le5]
\]
for some constant $\const[c2le5]=\const[c2le5](L,r,d,R,\const[c1le5])>0$ which does not depend on $K_n$.
\end{lemma}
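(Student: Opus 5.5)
The plan is to exploit the block structure of the Hessian that comes from the parallel architecture (\ref{se2eq1}). Write $f_\bw(x)=\sum_{k=1}^{K_n} w_{1,1,k}^{(L)}\cdot g_k(\bv_k,x)$ with $g_k=f_{\bw,k,1}^{(L)}$. Here $\bv_k$ is the vector of all inner weights of the $k$-th subnetwork, i.e.\ the weights $w_{k,i,j}^{(l)}$ with $l\in\{0,\dots,L-1\}$. The number $J$ of components of each $\bv_k$ depends only on $d,L,r$.

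The second derivatives have three types.
\begin{itemize}
\item Derivatives with respect to two output weights vanish, since $f_\bw$ is linear in the output weights.
\item Derivatives with respect to $w_{1,1,k}^{(L)}$ and a component of $\bv_{k'}$ equal $\partial g_k/\partial \bv_k$ if $k'=k$ and $0$ otherwise.
\item Derivatives with respect to components of $\bv_k$ and $\bv_{k'}$ equal $w_{1,1,k}^{(L)}\cdot\partial^2 g_k/\partial \bv_k^2$ if $k=k'$ and $0$ otherwise, because different subnetworks share no weights.
\end{itemize}
Since $\bH(\bw)$ is symmetric, $\|\bH(\bw)\|_2=\sup_{\bz\neq 0}|\bz^T\bH(\bw)\bz|/\|\bz\|^2$. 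For $\bz=(a_1,\dots,a_{K_n},\bz_1,\dots,\bz_{K_n})$, split into output and inner coordinates, this gives
\[
|\bz^T\bH(\bw)\bz|\leq \sum_{k=1}^{K_n}\left(2|a_k|\cdot\|\nabla_{\bv_k} g_k\|\cdot\|\bz_k\| + |w_{1,1,k}^{(L)}|\cdot\|\nabla^2_{\bv_k} g_k\|_2\cdot\|\bz_k\|^2\right).
\]
Suppose $\|\nabla_{\bv_k} g_k\|\le c$ and $\|\nabla^2_{\bv_k} g_k\|_2\le c$ uniformly in $k$. Then, using $|w_{1,1,k}^{(L)}|\le \const[c1le5]$ and $2|a_k|\|\bz_k\|\le a_k^2+\|\bz_k\|^2$, the right-hand side is at most $c(1+\const[c1le5])\sum_k(a_k^2+\|\bz_k\|^2)=c(1+\const[c1le5])\|\bz\|^2$. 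This constant is free of $K_n$, so it remains to bound these per-subnetwork derivatives.

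For those bounds I use that $\bv_k$ has fixed dimension $J$. It therefore suffices to bound every first and second partial derivative of $g_k$ by a constant depending only on $(L,r,d,R,\const[c1le5])$, since the Frobenius norm then controls the spectral norm. I argue by induction over layers $l=1,\dots,L$ that all neurons $f_{k,i}^{(l)}$, together with their first and second partials with respect to any weights of layers $0,\dots,l-1$, are bounded by such constants.
\begin{itemize}
\item Base case $l=1$: $f_{k,i}^{(1)}=\sigma(\langle w^{(0)},x\rangle+w_0^{(0)})$. Its derivatives with respect to layer-$0$ weights are $\sigma'(\cdot)x^{(j)}$ (or $\sigma'(\cdot)$ for the bias) and $\sigma''(\cdot)x^{(j)}x^{(j')}$. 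These are bounded by $\|\sigma'\|_\infty R$ and $\|\sigma''\|_\infty R^2$ (plus $1$ for the bias terms), whatever the size of the layer-$0$ weights.
\item Inductive step: the chain and product rules applied to $\sigma(\sum_j w_{k,i,j}^{(l)}f_{k,j}^{(l)}+w_{k,i,0}^{(l)})$ express each derivative through $\sigma,\sigma',\sigma''$ (all bounded), the weights of layer $l\geq1$ (bounded by $\const[c1le5]$), and lower-layer neurons and their derivatives (bounded by the induction hypothesis). Each such expression is a sum of at most $r+1$ terms, and $r,L$ are fixed.
\end{itemize}

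The main point to get right is that the layer-$0$ weights, which are not bounded in the hypothesis and are of order $(\log n)n^\tau$ at initialization, never appear as multiplicative factors. They only enter through arguments of $\sigma,\sigma',\sigma''$, whose suprema are finite. Differentiating with respect to a layer-$0$ weight brings down only a coordinate $x^{(j)}$ or the constant $1$, never the weight itself. I will state the induction explicitly enough to make this visible, noting that every factor brought down by differentiation is either a weight of layer $\ge 1$, a bounded neuron value, or a coordinate of $x$.
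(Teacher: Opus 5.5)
Your proposal is correct and follows essentially the same route as the paper, which groups each output weight $w_{1,1,k}^{(L)}$ with the inner weights of the $k$-th subnetwork to make the Hessian block diagonal, uses $\|\bH(\bw)\|_2\le\max_k\|\bA_k\|_2$, and bounds each fixed-size block by its Frobenius norm via entrywise bounds; your quadratic-form split with $2|a_k|\,\|\bz_k\|\le a_k^2+\|\bz_k\|^2$ is an unpacked version of that same block bound. Your layer-by-layer induction also makes explicit why the unbounded layer-$0$ weights enter only through the arguments of $\sigma,\sigma',\sigma''$, which the paper asserts without proof.
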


\noindent
{\bf Proof.}
Since
\[
\frac{\partial^2 f_\bw (x)}{
\partial w_{k_1,i_1,j_1}^{(l_1)}
\partial w_{k_2,i_2,j_2}^{(l_2)}
}
=0
\]
holds whenever $k_1 \neq k_2$, the Hessian matrix is a block diagonal matrix
given by
\[
\left(
\begin{array}{cccc}
\bA_1 & 0 & \dots & 0 \\
0 & \bA_2 & \dots & 0 \\
\vdots & \vdots & \vdots & \vdots \\
0 & 0 & \dots & \bA_{K_n}
\end{array}
\right).
\]
If we split $\tilde{\bw}$ accordingly into
$\tilde{\bw}=(\tilde{\bw}_1, \dots, \tilde{\bw}_{K_n})^T$, then we have
\begin{eqnarray*}
&&
\| \bH(\bw) \cdot \tilde{\bw} \|^2
=
\left\|
\left(
\begin{array}{c}
\bA_1 \tilde{\bw}_1  \\
 \vdots \\
 \bA_{K_n} \tilde{\bw}_{K_n}
\end{array}
\right)
\right\|^2
=
\sum_{k=1}^{K_n}
\| \bA_k \tilde{\bw}_k \|^2
\leq
\sum_{k=1}^{K_n}
\| \bA_k  \|_2^2 \cdot \| \tilde{\bw}_k \|^2
\\
&&
\leq
\max \{
\| \bA_1  \|_2^2 , \dots, , \| \bA_{K_n}  \|_2^2 
\}
\cdot
\sum_{k=1}^{K_n}
 \| \tilde{\bw}_k \|^2
\\
&&
=
\max \{
\| \bA_1  \|_2^2 , \dots, , \| \bA_{K_n}  \|_2^2 
\}
\cdot
 \| \tilde{\bw} \|^2,
\end{eqnarray*}
which implies
\[
\| \bH (\bw) \|_2 \leq
\max \{
\| \bA_1  \|_2 , \dots, , \| \bA_{K_n}  \|_2
\}
.
\]
By construction, each matrix $\bA_k$ is a square matrix of size
\[
K=r+2+(L-2)\cdot r\cdot (r+1)+r\cdot (d+1),
\]
where all entries are bounded by a constant depending only on $L$, $r$, $d$, $\const[c1le5]$ and $\|x\|_\infty$.
It is easy to see that the spectral norm of a matrix is bounded by its Frobenius norm, i.e., that
\[
\|A_k\|_2=
\| (a_{i,j})_{1 \leq i,j \leq K} \|_2
\leq
\sqrt{
\sum_{i=1}^K \sum_{j=1}^K a_{i,j}^2
}
\]
holds,
which implies the assertion. \hfill $\Box$

    We summarize all our results concerning neural network optimization in the following theorem.
    
\begin{theorem}\label{th2}
Let $\sigma: \R \rightarrow \R$ be the logistic squasher, let $f_{\bw}$ be defined by (\ref{se2eq1}) - (\ref{se2eq3}), let $L,r,K_n\in\N$ and let
$F_n$ be defined by (\ref{se2eq5}).
Let $\const[th2newc1]>0$, $\const[th2newc2]\geq 1$, $\const[Bn]\geq 1$, $\kappa_n\geq 1$, $A\geq 1$ 
and assume $X_1,\dots X_n\in [-A,A]^d, \text{ }|Y_i|\leq \kappa_n \text{ }(i\in\{1,\dots ,n\} )$ and
\[
\const[th2newc2]\cdot K_n\geq \kappa_n \text{ and } \frac{\kappa_n}{n}\leq \const[th2newc1].
\]
Choose a starting vector $\bw^{(0)}$ which satisfies
\[
(\bw^{(0)})^{(L)}_{1,1,k}=0 \text{ and }|(\bw^{(0)})^{(l)}_{k,i,j}|\leq \const[Bn]
\]
for all $l \in \{1, \dots, L-1\}$ and all $i,j,k$,
and set
\[
\bw^{(t+1)} = \bw^{(t)} - \lambda_n \cdot \nabla_\bw F_n(\bw^{(t)})
\]
for $t=0, 1, \dots, t_n-1$. Set
\[
\lambda_n= \frac{\const[th2newc3]}{K_n^{3/2}  
\cdot \kappa_n}
\quad
\mbox{and}
\quad
t_n = \left \lceil
\const[th2newc4] \cdot
\frac{K_n^{3/2}  }{ \kappa_n} 
\right \rceil 
\]
for $\const[th2newc3],\const[th2newc4]>0$.
Let 
\begin{equation}\label{th2eq1}
\bw^*\in\left\{\bw\, : \,\|\bw-\bw^{(0)}\|\leq \const[c1th2]\cdot \frac{\sqrt{\kappa_n}}{\sqrt{n}}\right\}
\end{equation}
for $\const[c1th2]>0$
and assume (\ref{le1eq4}) holds.
Then we have for $\const[th2newc5],\const[th2newc6]>0$ and $n$ sufficiently large
\[
F_n(\bw^{(t_n)})\leq F_n(\bw^*)+\const[th2newc5]\cdot \kappa_n^2 
\cdot \|\bw^*-\bw^{(0)}\|^2+\const[th2newc6]\cdot \frac{\kappa^{3}_n}{K_n^{3/2}}.
\]

\end{theorem}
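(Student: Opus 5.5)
The plan is to apply Lemma \ref{le1} with $\lambda=\lambda_n$. Its conclusion is exactly the desired bound once we check three things: $C_n$ can be taken as a constant, so that $5\kappa_n C_n\le \const\cdot\kappa_n^2$; $1/(2\lambda_n t_n)\le \kappa_n^2/(2\const[th2newc3]\const[th2newc4] K_n^{3/2})\cdot\kappa_n/\kappa_n$ is absorbed; and $F_n(\bw^{(0)})/t_n\le \kappa_n^2/t_n\le \const\kappa_n^3/K_n^{3/2}$. Here $F_n(\bw^{(0)})=\frac1n\sum|Y_i|^2\le\kappa_n^2$ because the output weights start at zero. More precisely, $\frac{1}{2\lambda_n t_n}\|\bw^*-\bw^{(0)}\|^2\le \frac{\kappa_n}{2\const[th2newc3]\const[th2newc4]}\cdot\const[c1th2]^2\frac{\kappa_n}{n}$, which is dominated by $\kappa_n^2\|\bw^*-\bw^{(0)}\|^2$ up to a constant. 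Alternatively it can be bounded directly by $\kappa_n^2/n$, and this is at most $\kappa_n^3/K_n^{3/2}$ only if $K_n^{3/2}\lesssim n$. So I would simply keep it in the $\kappa_n^2\|\bw^*-\bw^{(0)}\|^2$ term. Assumptions (\ref{le1eq3}) and (\ref{le1eq4}) are given directly.

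\emph{Step 1: the descent condition (\ref{le1eq1}) via Lemma \ref{le3}.} Take $F=F_n$, $\ba_0=\bw^{(0)}$, $t=t_n$, and $\bar L=\const\cdot K_n^{3/2}\kappa_n$, with the constant chosen so that $\lambda_n\le 1/\bar L$ (shrink $\const[th2newc3]$ if needed, or enlarge $\bar L$). The key computation is the radius of the localization ball:
\[
\sqrt{8 t_n\max\{F_n(\bw^{(0)}),1\}/\bar L}\le \sqrt{\const\cdot \frac{K_n^{3/2}}{\kappa_n}\cdot\kappa_n^2\cdot\frac{1}{K_n^{3/2}\kappa_n}}=\const,
\]
which is a constant independent of $n$. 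Hence on this ball every inner weight is bounded by $\const[Bn]+\const$, and every output weight is bounded by a constant $\gamma_n^*$. Also $K_n\gamma_n^*\ge\kappa_n$ holds by $\const[th2newc2] K_n\ge\kappa_n$ after enlarging $\gamma_n^*$. Lemma \ref{le5Ko24} then gives the Lipschitz bound (\ref{le3eq2}) with constant $\const K_n^{3/2}\le\bar L$, using $\kappa_n\ge1$. Lemma \ref{le3Ko24} gives $\|\nabla F_n\|\le \const K_n^{3/2}$. This must be compared with $\sqrt{2t_n\bar L\max\{F_n(\bw^{(0)}),1\}}\ge \const\sqrt{K_n^{3/2}\kappa_n^{-1}K_n^{3/2}\kappa_n}=\const K_n^{3/2}$, and the constants in $\bar L$ can be tuned so that (\ref{le3eq1}) holds. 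This balancing of constants is the main obstacle. I would handle it by fixing $\bar L=\bar c\,K_n^{3/2}\kappa_n$ with $\bar c$ large: the right side of (\ref{le3eq1}) grows like $\sqrt{\bar c}$, the radii shrink like $1/\sqrt{\bar c}$, so the gradient bound stays fixed, and $\lambda_n\le 1/\bar L$ then requires $\const[th2newc3]\le 1/\bar c$. Lemma \ref{le3} then yields (\ref{le1eq1}).

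\emph{Step 2: the Taylor condition (\ref{le1eq2}) and (\ref{le1eq5}).} Any $\bw$ with $\|\bw^*-\bw\|\le\|\bw^*-\bw^{(0)}\|$ satisfies $\|\bw-\bw^{(0)}\|\le 2\const[c1th2]\sqrt{\kappa_n/n}\le 2\const[c1th2]\sqrt{\const[th2newc1]}$. The same holds for every point on the segment between $\bw$ and $\bw^*$. So all weights in layers $1,\dots,L$ along the segment are bounded by $\const[Bn]+\const$; the output layer starts at $0$, and layer-$0$ weights do not enter Lemma \ref{le5}'s hypothesis. The inputs satisfy $\|X_i\|\le\sqrt d A$. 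Combining Lemma \ref{le4} with Lemma \ref{le5} then gives (\ref{le1eq2}) with a constant $C_n=\const$. Finally, $C_n\|\bw^*-\bw^{(0)}\|^2\le \const\kappa_n/n\le\kappa_n$ for $n$ large, which is (\ref{le1eq5}).

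Plugging into Lemma \ref{le1} gives $F_n(\bw^{(t_n)})\le F_n(\bw^*)+(5\const\kappa_n+\const\kappa_n)\|\bw^*-\bw^{(0)}\|^2+\kappa_n^2/t_n$. Using $\kappa_n\le\kappa_n^2$ and $\kappa_n^2/t_n\le \kappa_n^3/(\const[th2newc4]K_n^{3/2})$ then yields the claim.
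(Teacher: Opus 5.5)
Your proposal is correct and is essentially the paper's proof: you apply Lemma~\ref{le1} with $\lambda=\lambda_n$, and then (\ref{le1eq1}) comes from Lemma~\ref{le3} together with Lemmas~\ref{le3Ko24} and \ref{le5Ko24} on a ball of constant radius around $\bw^{(0)}$ with $\bar L$ of order $K_n^{3/2}\kappa_n$; (\ref{le1eq2}) holds with a constant $C_n$ by Lemmas~\ref{le4} and \ref{le5}; and (\ref{le1eq5}) follows from (\ref{th2eq1}). Your explicit tuning of $\bar L$, which forces the step-size constant to be small, only spells out what the paper leaves implicit by taking $\bar L=1/\lambda_n$. One arithmetic slip does not affect the conclusion: $\lambda_n t_n$ is of order $\kappa_n^{-2}$, so $1/(2\lambda_n t_n)$ is of order $\kappa_n^2$ rather than $\kappa_n$ as in your final display, and this is exactly absorbed by the $\kappa_n^2\|\bw^*-\bw^{(0)}\|^2$ term.
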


\noindent
    {\bf Proof.} The assertion follows from Lemma \ref{le1} with $\lambda=\lambda_n$ and we will now check that the assumptions (\ref{le1eq1}) - (\ref{le1eq5}) are fulfilled. Assumptions (\ref{le1eq3}) and (\ref{le1eq4}) are trivially fulfilled. We will now consider assumption (\ref{le1eq1}). This assumption follows directly from Lemma \ref{le3}.    
Noticing that
\[
\kappa_n\cdot \sqrt{t_n\cdot\lambda_n}\leq \const
\text{ and } F_n(\bw^{(0)})\leq \kappa_n^2,
\]
we can see that the assumptions of Lemma \ref{le3} are fulfilled if we use Lemma \ref{le3Ko24} and Lemma \ref{le5Ko24} with
\[
\tilde{\gamma}^* = \const[th2newc2] + \const\cdot \kappa_n \cdot \sqrt{t_n\cdot\lambda_n}\text{ and }
B_n = \const[Bn] + \const \cdot \kappa_n\cdot\sqrt{t_n\cdot\lambda_n}
\]
and
\[
\bar L=\frac{1}{\lambda_n}=\const\cdot K_n^{3/2} 
\cdot \kappa_n.
\]
Assumption (\ref{le1eq1}) is hence fulfilled.
Remember that $\left|\left(\bw^{(0)}\right)_{k,i,j}^{(l)}\right|$
is  bounded by a constant for $l>0$ and all $k,i,j$.
Using this and (\ref{th2eq1}) we can show that
$\left|\left(\bw+\xi \cdot (\bw^*-\bw )\right)_{k,i,j}^{(l)}\right|$
is bounded by a constant
for all $\bw\in \R^{J_n}$ with $\|\bw^*-\bw\|\leq \|\bw^*-\bw^{(0)}\|$ and all $\xi\in [0,1]$, $l\in\{1,\dots,L\}$ and $k,i,j$.
Assumption (\ref{le1eq2}) follows consequently from Lemma \ref{le4} and Lemma \ref{le5} with $C_n=\const[c2le5](L,r,d,A,\const[th2nc])$ for some $\const[th2nc] > 0$, where we use $R=A$.
Assumption (\ref{le1eq5}) follows from (\ref{th2eq1}) for large enough $n$. With Lemma \ref{le1} we conclude 
\begin{align*}
F_n(\bw^{(t_n)})
&\leq
F_n(\bw^*)
+ \left(\const\cdot \kappa_n+ \const\cdot \kappa_n^2 
\right)\cdot \|\bw^*-\bw^{(0)} \|^2
+ \const[th2lc]\cdot \frac{\kappa_n^3}{K_n^{3/2}}
\\
&\leq
F_n(\bw^*)
+\const \cdot \kappa_n^2 
\cdot \|\bw^*-\bw^{(0)} \|^2
+\const[th2lc]\cdot \frac{\kappa_n^3}{K_n^{3/2}}.
\end{align*}
    \hfill $\Box$
\subsection{Neural network approximation}
\label{se4sub2}
We use the following result from Kohler (2026) for bounding the approximation error.

\begin{theorem}
  \label{th3}
Let $d \in \N$,  $p=q+s$ where
$s \in (0,1]$ and $q \in \N_0$, $C>0$,
$A \geq 1$
and
$A_n, B_n, \gamma_n^* \geq 1$.
Let $\sigma$ be the logistic squasher.
For $L,r,K \in \N$
let $\F$ be the set of all networks $f_{\bw}$ defined by
\begin{equation}
f_\bw(x) = \sum_{j=1}^{r} w_{1,1,j}^{(L)} \cdot f_{j,1}^{(L)}(x)
\end{equation}
for some $w_{1,1,1}^{(L)}, \dots, w_{1,1,r}^{(L)} \in \mathbb{R}$, where
$f_{j,1}^{(L)}=f_{\bw,j,1}^{(L)}$ are recursively defined by
\begin{equation}
f_{k,i}^{(l)}(x) = 
f_{\bw,k,i}^{(l)}(x) = 
\sigma\left(\sum_{j=1}^{r} w_{k,i,j}^{(l-1)}\cdot f_{k,j}^{(l-1)}(x) + w_{k,i,0}^{(l-1)} \right)
\end{equation}
for some $w_{k,i,0}^{(l-1)}, \dots, w_{k,i, r}^{(l-1)} \in \mathbb{R}$
$(l=2, \dots, L)$
and
\begin{equation}
f_{k,i}^{(1)}(x) = 
f_{\bw,k,i}^{(1)}(x) = 
\sigma \left(\sum_{j=1}^d w_{k,i,j}^{(0)}\cdot x^{(j)} + w_{k,i,0}^{(0)} \right)
\end{equation}
for some $w_{k,i,0}^{(0)}, \dots, w_{k,i,d}^{(0)} \in \mathbb{R}$, where
the weight vector satisfies
\[
|w_{k,i,j}^{(0)}| \leq A_n, \quad
|w_{k,i,j}^{(l)}| \leq B_n \quad \mbox{and} \quad
|w_{k,i,j}^{(L)}| \leq \gamma_n^*
\]
for all $l \in \{1, \dots, L-1\}$ and all $k,i,j$, and set
\[
\HH = \left\{ \sum_{k=1}^{K^{d}} f_k \quad : \quad f_k \in \F \quad (k=1, \dots, K^{d})
\right\}.
\]
Let $L,r \in \N$ with
\[
L \geq \lceil \log_2(q+d) \rceil
\quad
\mbox{and}
\quad
r \geq 4 \cdot (p+d)^2,
\]
and set
\[
 A_n =A \cdot K \cdot \log K, \quad B_n=\const
\quad \mbox{and} \quad 
\gamma_n^*=\const \cdot K^{q+d}.
\]
Assume $K \geq \const[c1th3]$ for $\const[c1th3]>0$ sufficiently large.
Then there exists for any $(p,C)$--smooth $f:\R^d \rightarrow \R$
a neural network $h \in \HH$ such that
\[
\sup_{x \in [-A,A)^d} |f(x)-h(x)|
\leq
\frac{\const}{K^{p}}.
\]
\end{theorem}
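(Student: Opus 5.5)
The plan is to build a localized Taylor-polynomial approximation on a uniform grid. The sum over $K^d$ subnetworks in $\HH$ is indexed by the cells of a partition of $[-A,A)^d$ into $K^d$ half-open cubes $C_{\bm{k}}$ of side length $2A/K$. For each cube, with lower left corner $a_{\bm{k}}$, let $p_{\bm{k}}$ be the Taylor polynomial of $f$ of degree $q$ around $a_{\bm{k}}$. The $(p,C)$--smoothness of $f$ gives
\[
\sup_{x \in C_{\bm{k}}} |f(x)-p_{\bm{k}}(x)| \leq \frac{\const}{K^p},
\]
and the same bound holds on a slightly enlarged cube of side length $\const/K$.

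The aim is to write $h=\sum_{\bm{k}} f_{\bm{k}}$ with $f_{\bm{k}} \in \F$ and
\[
f_{\bm{k}}(x) \approx p_{\bm{k}}(x) \cdot \prod_{j=1}^d \Big( \sigma\big(A_n (x^{(j)}-a_{\bm{k}}^{(j)})\big) - \sigma\big(A_n (x^{(j)}-a_{\bm{k}}^{(j)}-2A/K)\big) \Big).
\]
The first layer, with weights of size $A_n=A\cdot K\cdot \log K$, produces these steep sigmoids. Each bracket is then a smoothed indicator of an interval, with transition width of order $1/(K\log K)$ and tail of order $e^{-A_n \cdot \mathrm{dist}}$.

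Expand $p_{\bm{k}}$ in the rescaled variables $K\cdot(x^{(j)}-a_{\bm{k}}^{(j)})$. The product then becomes a linear combination of products of at most $q+d$ factors, each bounded by a constant. The product coefficients are of size at most $\const\cdot K^{q+d}$ once one accounts for this rescaling (the derivatives of $f$ themselves are bounded). These coefficients go into the output weights, which explains $\gamma_n^*=\const\cdot K^{q+d}$.

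The product of $q+d$ bounded quantities is computed by a binary tree of approximate multiplications. Each multiplication uses a fixed linear combination of logistic squashers with bounded inner weights, exploiting $\sigma''(t_0)\neq 0$ to approximate $xy$. Each multiplication is accurate up to an error that can be made smaller than $K^{-(p+q+d)}$ by a suitable bounded choice of the scaling constants. The tree has depth $\lceil \log_2(q+d)\rceil$, which together with the first layer fits into $L$ layers. Monomials and identity maps carried along in parallel need width of order $(p+d)^2$, which is why $r \geq 4(p+d)^2$ suffices. All inner weights in layers $1,\dots,L-1$ stay bounded, so $B_n=\const$. I would import this network calculus as in Kohler (2026) rather than re-derive every constant.

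The error analysis at a fixed $x$ splits the sum over cells into two parts. For cells far from $x$ (distance $\gg 1/(K\log K)$ in some coordinate), the smoothed indicator is at most $\exp(-\const\cdot \log K)$. By taking the constant in $A_n$, or equivalently $A$, large, this beats the factor $K^d\cdot K^{q+d}$ coming from the number of cells and the output weights. For the $O(2^d)$ neighbouring cells, the smoothed indicators telescope coordinatewise to a partition of unity up to negligible error. On the overlap region all involved $p_{\bm{k}}$ approximate $f$ up to $\const/K^p$, so $\big|f(x)-\sum_{\bm{k}} \phi_{\bm{k}}(x) p_{\bm{k}}(x)\big| \leq \const/K^p$. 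The multiplication errors, multiplied by $\gamma_n^*$ and summed over the $O(1)$ relevant cells, are also $O(K^{-p})$.

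The main obstacle I expect is this last bookkeeping. The multiplication-network errors get amplified by output weights of size $K^{q+d}$ and, for far cells, summed over $K^d$ terms. The plan is to make each elementary approximation error polynomially small in $K$ with a sufficiently high exponent, while keeping inner weights bounded. This should be achieved by scaling inputs of the multiplication gadgets by $K^{-c}$ and compensating in the output layer, which is absorbed into the constant in $\gamma_n^*$ or by enlarging the exponent bound accordingly. This is exactly the construction carried out in Kohler (2026), to which I would refer for the detailed constants.
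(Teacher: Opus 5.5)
The paper does not prove this theorem itself. Its entire proof is the line ``See Theorem 3 in Kohler (2026)'', so your final step of importing the construction and constants from that reference is exactly what the paper does. The sketch you build around the citation, however, does not work as written under the stated parameters.

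(i) $A_n=A\cdot K\cdot\log K$ is prescribed and $A$ is the given domain size, so you cannot enlarge ``the constant in $A_n$''. At a distance of one cell width $2A/K$, a steep sigmoid is only suppressed to about $\exp(-A_n\cdot 2A/K)=K^{-2A^2}$, which for $A=1$ does not beat $K^d\cdot K^{q+d}$. Far-cell suppression must instead come from a separation of $m\geq m_0$ cells, giving $K^{-2A^2 m}$, for a suitable constant $m_0$. The near set is then a block of $O(m_0^d)$ cells rather than the $2^d$ neighbours; your Taylor argument still works on it.

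(ii) If the grid covers exactly $[-A,A)^d$, the coordinatewise telescoping gives $\sigma(A_n(x^{(j)}+A))-\sigma(A_n(x^{(j)}-A))$. This is about $1/2$ at $x^{(j)}=-A$ and as $x^{(j)}\to A$, so the partition of unity fails at the boundary of the domain.

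(iii) Each $f_{\bm{k}}\in\F$ has only $r$ output weights, each multiplying a single sigmoid output neuron. You want to store all $\binom{q+d}{d}$ Taylor coefficients of $p_{\bm{k}}$ there, and this number is not controlled by $4(p+d)^2$: for $q=d=6$ and small $s$, $r=580$ is admissible while $\binom{12}{6}=924$.

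Points (ii) and (iii) can be repaired by coarsening the grid by a constant factor: use fewer, larger cells on a slightly enlarged cube, with several summands of $\HH$ per cell. This only changes the constant in $K^{-p}$.

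(iv) A first layer followed by a multiplication tree of depth $\lceil\log_2(q+d)\rceil$ needs $\lceil\log_2(q+d)\rceil+1$ hidden layers, which is the value Theorem~\ref{th1} uses. So it does not fit when $L=\lceil\log_2(q+d)\rceil$.

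The genuine gap is the step you yourself call the main obstacle, and both of your proposed resolutions conflict with the statement. A bounded choice of the gadgets' scaling constants gives multiplication errors that are bounded but not polynomially small in $K$. Rescaling each of the $q+d$ factors by $K^{-\theta}$ costs $K^{\theta(q+d)}$ in the output weights. That is not a constant, and it exceeds $\gamma_n^*=c\cdot K^{q+d}$ once $\theta>1$; the exponent is part of the statement and cannot be enlarged. Moreover, your factors $K(x^{(j)}-a_{\bm{k}}^{(j)})$ are bounded only near the cell and are of order $K$ on far cells, where the gadgets would be used outside their range.

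What you actually need is a multiplication scheme with inner weights bounded by a constant that, at input scale about $K^{-1}$, has error $O(K^{-p})$ \emph{relative to the product}. The scale $K^{-1}$ is the most shrinking the output-weight budget can compensate. The error must be relative so that products containing a tiny indicator factor stay tiny, and the $K^d$ far cells remain negligible after multiplication by $K^{q+d}$. A fixed four-neuron gadget based on $\sigma''(t_0)\neq 0$ has relative error of fixed order in the input scale, namely order $K^{-2}$ at scale $K^{-1}$ for generic $t_0$, so for large $p$ more is required. Your sketch does not supply this. It is precisely the content of Theorem 3 in Kohler (2026), on which both you and the paper ultimately rely.
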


\noindent
    {\bf Proof.} See Theorem 3 in Kohler (2026).
    \hfill $\Box$

\subsection{Neural network generalization}
\label{se4sub3}

Our next lemma is our main tool to analyze the generalization error
for exponentially $\beta$-mixing data.

\begin{lemma}\label{lea3}
  Assume (A4) - (A5),
let $n \in \N$ with $n>1$,
  and let $\F_n$ be a set of functions $f:\R^d\to \R$. Then we have
\begin{align*}
&\EXP\left\{
\sup_{f\in\F_n} \EXP \left\{ |(T_{\kappa_n}f)(X)-T_{\kappa_n} Y|^2 \right\}
-
\EXP \{ |m_{\kappa_n}(X)- T_{\kappa_n} Y|^2\}
\vphantom{
\hspace*{2cm}
-
2 \cdot \frac{1}{n} \sum_{i=1}^n
\left(
|m_n(X_i)-T_{\kappa_n} Y_i|^2
-
|m_{\kappa_n}(X_i)- T_{\kappa_n} Y_i|^2
\right)
}
\right.
\\
&
\left.
\vphantom{
\sup_{f\in\F_n} \EXP \left\{ |T_{\kappa_n}f(X)-T_{\kappa_n} Y|^2 \right\}
-
\EXP \{ |m_{\kappa_n}(X)- T_{\kappa_n} Y|^2\}
}
\hspace*{2cm}
-
2 \cdot \frac{1}{n} \sum_{i=1}^n
\left(
|(T_{\kappa_n}f)(X_i)-T_{\kappa_n} Y_i|^2
-
|m_{\kappa_n}(X_i)- T_{\kappa_n} Y_i|^2
\right)
\right\}
\\
&\quad\leq
4\cdot\kappa_n^2\cdot \frac{n}{(\log n)^2}\cdot \beta_{\lceil (\log
  n)^2\rceil}
((X_1,Y_1), (X_2,Y_2), \dots, )
+\epsilon
\\
&\quad\quad
+\int_\epsilon^\infty 
14\cdot \sup_{x_1^n\in \supp(X)^n}\mathcal N_1\left(\frac{u}{80\cdot \kappa_n},\F_n,x_1^n\right)\cdot\exp\left(-\const[plea3c1]\cdot\frac{u\cdot n}{\kappa_n^2\cdot (\log n)^2}\right) \, du
\end{align*}
for every $\epsilon>0$ and some $\const[plea3c1]>0$.
\end{lemma}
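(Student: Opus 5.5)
We reduce the dependent case to an independent one with Berbee's coupling in a blocking construction, then apply the standard i.i.d. bound for the generalization error of truncated least squares (as in Gy\"orfi et al. (2002), Theorem 11.4) to the independent blocks. Set $N_n=\lceil(\log n)^2\rceil$ and split the indices $1,\dots,n$ into consecutive blocks of length $N_n$. For each offset $j\in\{1,\dots,N_n\}$, consider the subsequence of indices $j, j+N_n, j+2N_n,\dots$ with about $n/N_n$ terms. Within such a subsequence, consecutive observations are $N_n$ steps apart.

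By Berbee's coupling lemma (to be stated in the appendix), for each offset $j$ we can construct independent random vectors $(\tilde X_{j+lN_n},\tilde Y_{j+lN_n})$ with the same marginal distributions as $(X,Y)$, using (A4). They satisfy
\[
\PROB\{(\tilde X_{j+lN_n},\tilde Y_{j+lN_n})\neq(X_{j+lN_n},Y_{j+lN_n})\}\leq \beta_{N_n}.
\]
Write the empirical average $\frac1n\sum_{i=1}^n$ as $\frac{1}{N_n}\sum_{j=1}^{N_n}$ of averages over the subsequences; handling the boundary indices gives only harmless constants. The supremum over $f$ of an average is at most the average of the suprema. Hence the expectation in the statement is at most $\frac1{N_n}\sum_j$ of the analogous expectations for each subsequence.

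For each subsequence we replace the original data by the coupled copies. All summands are bounded by $4\kappa_n^2$ in absolute value, because of truncation (and $|m_{\kappa_n}|\le\kappa_n$, assuming $m_{\kappa_n}$ is the regression function of $T_{\kappa_n}Y$). So on the event that the two differ at some index, the difference of the two expressions is at most a constant times $\kappa_n^2$. A union bound over the roughly $n/N_n$ indices then gives a replacement error of order $\kappa_n^2\cdot\frac{n}{(\log n)^2}\cdot\beta_{N_n}$, which matches the first term. The constant $4$ comes from careful bookkeeping; if it does not come out exactly, I would adjust the factor $2$ in the empirical term.

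For the coupled, independent sample of size $m\approx n/(\log n)^2$, I apply the i.i.d. argument. It uses (A5) to make $(X,Y)$ independent of the sample, and proceeds by symmetrization with a ghost sample together with Bernstein's inequality for the variance-type term (the standard argument giving the factor $2$ and the covering number $\mathcal N_1(u/(80\kappa_n),\F_n,x_1^m)$). This yields
\[
\PROB\{\sup_f(\dots)>u\}\le 14\sup_{x_1^m}\mathcal N_1\Big(\frac{u}{80\kappa_n},\F_n,x_1^m\Big)\exp\Big(-\frac{u\,m}{c\,\kappa_n^2}\Big).
\]
Covering numbers on subsets of points taken from $\supp(X)$ are dominated by the supremum over $x_1^n\in\supp(X)^n$. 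This holds either by padding the points, or because an empirical $L_1$ cover on $m$ points is controlled by one on $n$ points after repeating points. Finally, $\EXP Z\le\epsilon+\int_\epsilon^\infty\PROB\{Z>u\}\,du$ gives the $\epsilon$ and integral terms, with $m=n/(\log n)^2$ in the exponent.

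The main obstacle is the middle step. The expression involves a supremum over $f$ of expectations combined with empirical sums, so the coupling must be applied to the whole functional of the sample and not to each summand separately. It also requires that the i.i.d. tail bound fits exactly the form used in Theorem 11.4 of Gy\"orfi et al. (2002), with the factor $2$ and the covering radius $u/(80\kappa_n)$.
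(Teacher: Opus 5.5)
Your proposal is correct and follows essentially the paper's route: interleaved subsequences $I_{n,k}$ with spacing $N_n=\lceil(\log n)^2\rceil$, then convexity of the supremum (use the exact weights $|I_{n,k}|/n$, which sum to one, so no boundary constants arise), then a sequential Berbee-type coupling on each subsequence (the paper's Lemma~\ref{lea2}, giving mismatch probability at most $(|I_{n,k}|-1)\,\beta_{N_n}\le \frac{n}{(\log n)^2}\,\beta_{N_n}$, since the first element of a subsequence needs no coupling), and finally Theorem 11.4 of Gy\"orfi et al.\ (2002) with $\epsilon=1/2$ on the coupled i.i.d.\ subsample, followed by integrating the tail. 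On your hedge about the constant: the paper gets the $4$ by bounding the functional by $4\kappa_n^2$ on the mismatch event (strictly, the obvious bound is only $12\kappa_n^2$, which is harmless because this term is multiplied by $e^{-c(\log n)^2}$ in the application), but you should not ``adjust the factor $2$'', since that factor is exactly what Theorem 11.4 with $\epsilon=1/2$ produces.
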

We will use the following auxiliary result to prove Lemma \ref{lea3}, which
allows us to reduce our problem to the case of independent data and
controls the probability of the resulting error with $\beta$-mixing coefficients.
\begin{lemma}
\label{lea2}
Let $Z_0$, $Z_1$, \dots, $Z_n$ be identically distributed
$\R^d$--valued random variables defined on a probability space
$(\Omega,\A,\PROB)$ such that $Z_0$ is independent from
$Z_1$, \dots, $Z_n$. Then there exists a probability space
$(\bar{\Omega},\bar{\A},\bar{\PROB})$ and random variables
$\bar{ Z}_0$, $\bar{ Z}_1$, \dots, $\bar{ Z}_n$,
$\bar{ Z}_1^*$, \dots, $\bar{ Z}_n^*$ defined on this probability
space
such that
\begin{equation}
\label{lea2eq1}
\PROB_{(Z_0,Z_1, \dots, Z_n)}
=
\bar{\PROB}_{(\bar{ Z}_0, \bar{ Z}_1, \dots, \bar{ Z}_n)},
\end{equation}
\begin{equation}
\label{lea2eq2}
\bar{ Z}_0, \bar{ Z}_1^*, \dots, \bar{ Z}_n^* \quad \mbox{are } i.i.d.
\end{equation}
and
\begin{eqnarray}
\label{lea2eq3}
&&
\bar{\PROB}\{ \exists k \in \{1,\dots, n\}: \bar{Z}_k \neq \bar{Z}^*_k
\}
\nonumber \\
&&
\leq
(n-1) \cdot \max_{s \in \{2, \dots, n\}}
\EXP \left\{
\sup_{C \in \C}
\left|
\PROB\{ Z_s \in C | Z_1, \dots, Z_{s-1} \} - \PROB\{Z_s \in C\}
\right| \right\},
\end{eqnarray}
where $\C$ is some countable subset of $\B_d$.
\end{lemma}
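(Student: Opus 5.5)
We build the coupling sequentially via the standard Berbee-type maximal coupling lemma, working on a product space that carries the original variables together with independent uniform random variables used for the successive couplings. First, since $Z_0$ is independent of $(Z_1,\dots,Z_n)$, we may treat $\bar Z_0$ separately. We take $\bar\Omega$ to be a product of a space carrying a copy $(\bar Z_1,\dots,\bar Z_n)$ with law $\PROB_{(Z_1,\dots,Z_n)}$, an independent copy $\bar Z_0$ of $Z_0$, and independent uniforms $U_1,\dots,U_n$ on $[0,1]$. Then (\ref{lea2eq1}) holds by construction, because the joint law of $(Z_0,Z_1,\dots,Z_n)$ is the product $\PROB_{Z_0}\otimes\PROB_{(Z_1,\dots,Z_n)}$.

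Next we construct $\bar Z_1^*,\dots,\bar Z_n^*$ inductively. We set $\bar Z_1^*=\bar Z_1$. For $s\ge2$, assume $\bar Z_1^*,\dots,\bar Z_{s-1}^*$ have been constructed as functions of $\bar Z_1,\dots,\bar Z_{s-1},U_1,\dots,U_{s-1}$, i.i.d.\ with law $\PROB_{Z_1}$. We then apply the conditional maximal coupling (Berbee's lemma in the form of, e.g., Doukhan (1994), or Lemma 4.1 of Barrera and Gobet (2021)) to the pair consisting of $V=(\bar Z_1,\dots,\bar Z_{s-1},U_1,\dots,U_{s-1})$ and $\bar Z_s$, using $U_s$ as the external randomization. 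This yields $\bar Z_s^*$ independent of $V$, with $\bar Z_s^*\stackrel{d}{=}\bar Z_s\stackrel{d}{=}Z_1$, and
\[
\bar\PROB\{\bar Z_s\neq\bar Z_s^*\}=\beta(V,\bar Z_s)=\EXP\Big\{\sup_{C\in\B_d}\big|\PROB\{Z_s\in C|Z_1,\dots,Z_{s-1}\}-\PROB\{Z_s\in C\}\big|\Big\}.
\]
Because $U_1,\dots,U_{s-1}$ are independent of $\bar Z_s$ given the $\bar Z$'s, they do not enlarge the $\beta$-coefficient, so the right-hand side is indeed the stated conditional quantity. Since $\bar Z_s^*$ is independent of $V$, which determines $\bar Z_1^*,\dots,\bar Z_{s-1}^*$, the variables $\bar Z_1^*,\dots,\bar Z_s^*$ are i.i.d. All of this lives on $\bar\Omega$ and is independent of $\bar Z_0$, which gives (\ref{lea2eq2}).

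Finally, a union bound over $s=2,\dots,n$ (the case $s=1$ contributes zero) gives (\ref{lea2eq3}) with the supremum over $\B_d$. We replace $\B_d$ by a countable generating algebra $\C$, for instance finite unions of rational rectangles. By a monotone class argument the supremum of $|\mu(C)-\nu(C)|$ over $\B_d$ equals the supremum over $\C$ for any two probability measures (total variation), and this holds pointwise for regular versions of the conditional distributions. This also resolves measurability of the supremum, which is the reason for introducing $\C$.

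The main obstacle is the coupling step itself: making Berbee's lemma rigorous requires regular conditional distributions, which exist on the Polish space $\R^d$, and a measurable quantile-type construction of $\bar Z_s^*$ from $U_s$. We would cite Berbee's lemma rather than reprove it.
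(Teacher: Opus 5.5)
Your proof is correct. Its skeleton is the same as the induction in the paper's proof of Lemma \ref{lea2}: one conditional coupling per index, with the earlier uniforms included in the conditioning variable, followed by a union bound. The differences are in how the one-step coupling is obtained and in the bookkeeping.

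You cite Berbee's lemma, whereas the paper proves its own version, Lemma \ref{lea1}, from scratch. It discretizes $Y$ onto finitely many dyadic cells, performs an explicit finite maximal coupling conditionally on $X=x$, and passes to the limit via tightness, Prokhorov's theorem and a modified Skorokhod representation (Lemma \ref{lea4}). Since that limit lives on a new probability space, the paper must re-represent everything at each induction step. This is why Lemma \ref{lea1} carries the extra vector $Z=(Z_{l+2},\dots,Z_n)$. The paper also puts $Z_0$ into the conditioning vector to make $\bar Z_{l+1}^*$ independent of $\bar Z_0$.

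Your product-space construction avoids both complications. You take $\bar Z_0$, $(\bar Z_1,\dots,\bar Z_n)$ and $U_1,\dots,U_n$ as independent factors and make each $\bar Z_s^*$ a function of variables already present. Independence from $\bar Z_0$ is then automatic, and nothing needs to be re-represented.

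The countable class is also handled differently. In the paper, $\C$ arises as finite unions of dyadic cubes, and (\ref{lea1eq5}) is only an inequality obtained via the Portmanteau theorem. You get equality with the $\beta$-coefficient over $\B_d$ and then reduce to a countable generating algebra by total-variation approximation, which is valid.

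The one point needing care is the precise form of Berbee's lemma you invoke. You need the version in which the coupled variable is a measurable function of $(V,\bar Z_s,U_s)$, the analogue of (\ref{lea1eq3}). That measurability is what keeps your inductive hypothesis, that $\bar Z_1^*,\dots,\bar Z_{s-1}^*$ are functions of $V$, alive. A version that only produces a coupled copy on some enlarged space would push you back into the paper's step-by-step re-representation. What the paper's longer route buys is self-containedness, which is its stated reason for including the appendix.
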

Lemma \ref{lea2} is
closely related to classical coupling results (see Berbee (1979),
Corollary 4.2.4 and Doukhan (1994), Theorem 1, Section 1.1) and its
extensions (c.f. Barrera and Gobet (2021), Lemma 2.10), and follows
from these well--known results. For the sake of completeness, we
provide nevertheless a complete and self-contained proof of this
result in the appendix.

\noindent
{\bf Proof of Lemma \ref{lea3}.}
Set $N_n = \lceil (\log n)^2\rceil$ and $I_{n,k}=\{k,k+N_n, k+2\cdot N_n,\dots\}\cap\{1,\dots,n\}$ for $k\in\{1,\dots,N_n\}$.
For each $n\in\N$ we have
\[
\bigcup_{k=1}^{N_n} I_{n,k} =\{1,\dots,n\}\text{ and } I_{n,k_1}\cap
I_{n,k_2}=\emptyset \text{ for all } k_1,k_2\in\{1,\dots,N_n\}, k_1 \neq k_2.
\]
Notice that 
$$|I_{n,k}|
\in
\left\{\left\lfloor\frac{n}{N_n}\right\rfloor,\left\lceil \frac{n}{N_n}\right\rceil\right\}
=
\left\{\left\lfloor\frac{n}{\lceil (\log n)^2\rceil}\right\rfloor,\left\lceil \frac{n}{\lceil (\log n)^2\rceil}\right\rceil\right\}
$$
and hence
\begin{equation}\label{plea3eq0}
|I_{n,k}|\leq \frac{n}{(\log n)^2}+1.
\end{equation}
We get with the convexity of the supremum
\begin{align*}
&
\EXP\left\{\sup_{f\in\F_n} \left(\EXP \left\{ |(T_{\kappa_n}f)(X)-T_{\kappa_n} Y|^2 \right\}
-
\EXP \{ |m_{\kappa_n}(X)- T_{\kappa_n} Y|^2\}
\vphantom{
\hspace*{2cm}
-
2 \cdot \frac{1}{n} \sum_{i=1}^n
\left(
|m_n(X_i)-T_{\kappa_n} Y_i|^2
-
|m_{\kappa_n}(X_i)- T_{\kappa_n} Y_i|^2
\right)
}
\right.
\right.
\\
&
\left.
\left.
\vphantom{
\sup_{f\in\F_n} \EXP \left\{ |T_{\kappa_n}f(X)-T_{\kappa_n} Y|^2 \right\}
-
\EXP \{ |m_{\kappa_n}(X)- T_{\kappa_n} Y|^2\}
}
\hspace*{2cm}
-
2 \cdot \frac{1}{n} \sum_{i=1}^{n}
\left(
|(T_{\kappa_n}f)(X_i)-T_{\kappa_n} Y_i|^2
-
|m_{\kappa_n}(X_i)- T_{\kappa_n} Y_i|^2
\right)
\right)
\right\}
\\
&\quad\leq
\frac{1}{n}\sum_{k=1}^{N_n}|I_{n,k}|
\EXP\left\{\sup_{f\in\F_n} \left(\EXP \left\{ |(T_{\kappa_n}f)(X)-T_{\kappa_n} Y|^2 \right\}
-
\EXP \{ |m_{\kappa_n}(X)- T_{\kappa_n} Y|^2\}
\vphantom{
\hspace*{2cm}
-
2 \cdot \frac{1}{|I_{n,k}|} \sum_{i\in I_{n,k}}
\left(
|m_n(X_i)-T_{\kappa_n} Y_i|^2
-
|m_{\kappa_n}(X_i)- T_{\kappa_n} Y_i|^2
\right)
}
\right.
\right.
\\
&
\left.
\left.
\vphantom{
\sup_{f\in\F_n} \EXP \left\{ |T_{\kappa_n}f(X)-T_{\kappa_n} Y|^2 \right\}
-
\EXP \{ |m_{\kappa_n}(X)- T_{\kappa_n} Y|^2\}
}
\hspace*{2cm}
-
2 \cdot \frac{1}{|I_{n,k}|} \sum_{i\in I_{n,k}}
\left(
|(T_{\kappa_n}f)(X_i)-T_{\kappa_n} Y_i|^2
-
|m_{\kappa_n}(X_i)- T_{\kappa_n} Y_i|^2
\right)
\right)
\right\}
\\
&\quad\leq
\max_{k=1,\dots,N_n}
\EXP\left\{
\sup_{f\in\F_n} \left(\EXP \left\{ |(T_{\kappa_n}f)(X)-T_{\kappa_n} Y|^2 \right\}
-
\EXP \{ |m_{\kappa_n}(X)- T_{\kappa_n} Y|^2\}
\vphantom{
\hspace*{2cm}
-
2 \cdot \frac{1}{|I_{n,k}|} \sum_{i\in I_{n,k}}
\left(
|m_n(X_i)-T_{\kappa_n} Y_i|^2
-
|m_{\kappa_n}(X_i)- T_{\kappa_n} Y_i|^2
\right)
}
\right.
\right.
\\
&
\left.
\left.
\vphantom{
\sup_{f\in\F_n} \EXP \left\{ |T_{\kappa_n}f(X)-T_{\kappa_n} Y|^2 \right\}
-
\EXP \{ |m_{\kappa_n}(X)- T_{\kappa_n} Y|^2\}
}
\hspace*{2cm}
-
2 \cdot \frac{1}{|I_{n,k}|} \sum_{i\in I_{n,k}}
\left(
|(T_{\kappa_n}f)(X_i)-T_{\kappa_n} Y_i|^2
-
|m_{\kappa_n}(X_i)- T_{\kappa_n} Y_i|^2
\right)
\right)
\right\}
\\
&\quad =:
\max_{k=1,\dots, N_n} \EXP\{g_{n,k}((X,Y),((X_i,Y_i))_{i\in I_{n,k}})\}.
\end{align*}
Here we used that $\sum_{k=1}^{N_n} |I_{n,k}|=n$.

Now we use Lemma \ref{lea2} with
\[
n=|I_{n,k}|,\text{ }Z_0=(X,Y)
\]
and
\[
(Z_i)_{i=1}^{|I_{n,k}|}
=((X_{k+(i-1) \cdot N_n},Y_{k+(i-1)\cdot N_n}))_{i=1}^{|I_{n,k}|}(=((X_j,Y_j))_{j\in I_{n,k}})
\]
for each $k\in\{1,\dots,N_n\}$.
It follows that for each $k\in\{1,\dots,N_n\}$ there exists a probability space $(\Omega^{(k)},\A^{(k)},\PROB^{(k)})$ and random variables $(X^{(k)},Y^{(k)})$, $((X_j^{(k)},Y_j^{(k)}))_{j\in I_{n,k}}$ and $((X_j^{*(k)},Y_j^{*(k)}))_{j\in I_{n,k}}$ defined on this probability space such that
\begin{equation}\label{plea3eq1}
\PROB_{((X,Y),((X_j,Y_j))_{j\in I_{n,k}})}
=
\PROB^{(k)}_{((X^{(k)},Y^{(k)}),((X_j^{(k)},Y_j^{(k)}))_{j\in I_{n,k}})},
\end{equation}
\begin{equation}\label{plea3eq2}
(X^{(k)},Y^{(k)}),((X_j^{*(k)},Y_j^{*(k)}))_{j\in I_{n,k}}\text{ are i.i.d.} 
\end{equation}
and
\begin{align*}
&\PROB^{(k)}\{\exists j\in I_{n,k} :(X_j^{(k)},Y_j^{(k)})\neq(X_j^{*(k)},Y_j^{*(k)})\}
\\
&\quad \leq
(|I_{n,k}|-1)\max_{1\in \{1,\dots, |I_{n,k}|-1\}}
\EXP\Bigg\{\sup_{C\in \C}\big|\PROB\{(X_{k+s\cdot N_n},Y_{k+s \cdot N_n})\in C|
\\
&\hspace{4em}
(X_k,Y_k),(X_{k+ N_n},Y_{k+ N_n}),\dots,(X_{k+(s-1)\cdot N_n},Y_{k+(s-1)\cdot N_n})\}
\\
&\hspace{12em}
-\PROB\{(X_{k+s \cdot N_n},Y_{k+s\cdot N_n})\in C\}\big| \Bigg\}
\\
&\quad =
(|I_{n,k}|-1)\max_{i\in I_{n,k}\setminus\{k\}} 
\\
&\hspace{4em}
\EXP\left\{ \sup_{C\in \C}|\PROB\{(X_i,Y_i)\in C|(X_j,Y_j) \, : \, j \in
  I_{n,k}, j<i\}-\PROB\{(X_i,Y_i)\in C\}| \right\}
\end{align*}
for some countable set $\C\subset B_{d+1}$.

We have for
$i \in I_{n,k} \setminus \{k\}$ 
and each $C\in \C$ by the tower property and Jensen's inequality
\begin{align*}
&|\PROB\{(X_i,Y_i)\in C|(X_j,Y_j) \, : \, j\in I_{n,k}, j<i\}-\PROB\{(X_i,Y_i)\in C\}|
\\
&\quad =
|\EXP\{1_{\{(X_i,Y_i)\in C\}}-\PROB\{(X_i,Y_i)\in C\}|(X_j,Y_j) \, : \, j\in I_{n,k}, j<i\}|
\\
&\quad =
|\EXP\{\EXP\{1_{\{(X_i,Y_i)\in C\}}-\PROB\{(X_i,Y_i)\in C\}|(X_m,Y_m)\, : \, m=1,\dots,i-N_n\}
\\
&\hspace{4em}
|(X_j,Y_j) \, : \, j\in I_{n,k}, j<i\}|
\\
&\quad \leq
\EXP\{|\EXP\{1_{\{(X_i,Y_i)\in C\}}-\PROB\{(X_i,Y_i)\in C\}|(X_m,Y_m)\, : \, m=1,\dots,i-N_n\}|
\\
&\hspace{4em}
|(X_j,Y_j) \, : \, j\in I_{n,k}, j<i\}
\\
&\quad =
\EXP\{
|\PROB\{(X_i,Y_i)\in C |(X_m,Y_m)\, : \, m=1,\dots,i-N_n\}-\PROB\{(X_i,Y_i)\in C\}|
\\
&\hspace{4em}
|(X_j,Y_j) \, : \, j\in I_{n,k}, j<i\}
\\
&\quad\leq
\EXP\{
\esssup_{A\in\B_{d+1}}
|\PROB\{(X_i,Y_i)\in A |(X_m,Y_m)\, : \, m=1,\dots,i-N_n\}-\PROB\{(X_i,Y_i)\in A\}|
\\
&\hspace{4em}
|(X_j,Y_j) \, : \, j\in I_{n,k}, j<i\}
\end{align*}
almost surely. 
This implies
\begin{eqnarray*}
  &&
\EXP\left\{ \sup_{C\in \C}|\PROB\{(X_i,Y_i)\in C|(X_j,Y_j) \, : \, j \in
  I_{n,k}, j<i\}-\PROB\{(X_i,Y_i)\in C\}| \right\}
  \\
  &&
  \leq
\EXP\{
\esssup_{A\in\B_{d+1}}
|\PROB\{(X_i,Y_i)\in A |(X_m,Y_m)\, : \, m=1,\dots,i-N_n\}-\PROB\{(X_i,Y_i)\in A\}|
\}  
  \end{eqnarray*}
and hence
\begin{align}
&\PROB^{(k)}(\exists j\in I_{n,k} :(X_j^{(k)},Y_j^{(k)})\neq(X_j^{*(k)},Y_j^{*(k)}))
\nonumber\\
&\quad\leq
(|I_{n,k}|-1)\cdot\beta_{N_n}((X_1,Y_1),(X_2,Y_2),\dots).\label{plea3eq3}
\end{align}

Denote with $\EXP^{(k)}$ the expectation with respect to $\PROB^{(k)}$.
We have by (\ref{plea3eq1})
\begin{align*}
&\EXP\{g_{n,k}((X,Y),((X_i,Y_i))_{i\in I_{n,k}})\}
=
\EXP^{(k)}\{g_{n,k}((X^{(k)},Y^{(k)}),((X_i^{(k)},Y_i^{(k)}))_{i\in I_{n,k}})\}
\\
&\quad=
\EXP^{(k)}\{g_{n,k}((X^{(k)},Y^{(k)}),((X_i^{(k)},Y_i^{(k)}))_{i\in I_{n,k}})\cdot 1_{\{\exists j\in I_{n,k}:(X_j^{(k)},Y_j^{(k)})\neq(X_j^{*(k)},Y_j^{*(k)})\}}\} \\
&\quad +
\EXP^{(k)}\{g_{n,k}((X^{(k)},Y^{(k)}),((X_i^{(k)},Y_i^{(k)}))_{i\in I_{n,k}})\cdot 1_{\{\forall j\in I_{n,k}:(X_j^{(k)},Y_j^{(k)})=(X_j^{*(k)},Y_j^{*(k)})\}}\} \\
&=: \EXP^{(k)}\{S_{1,n,k}\}+\EXP^{(k)}\{S_{2,n,k}\}.
\end{align*}
Since $g_{n,k}((x,y),((x_i,y_i))_{i\in I_{n,k}})\leq 4\cdot \kappa_n^2$, we get by (\ref{plea3eq0}) and (\ref{plea3eq3})
\begin{align*}
\EXP^{(k)}\{S_{1,n,k}\}
&\leq
4\cdot \kappa_n^2\cdot \PROB^{(k)}(\exists j\in I_{n,k}:(X_j^{(k)},Y_j^{(k)})\neq(X_j^{*(k)},Y_j^{*(k)}))
\\
&\leq
4\cdot\kappa_n^2\cdot (|I_{n,k}|-1)\cdot \beta_{N_n}((X_1,Y_1),(X_2,Y_2),\dots)
\\
&\leq
4\cdot\kappa_n^2\cdot \frac{n}{(\log n)^2}\cdot \beta_{N_n}((X_1,Y_1),(X_2,Y_2),\dots)
.
\end{align*}
We also have
\begin{align*}
&\EXP^{(k)}\{S_{2,n,k}\}
\leq
\EXP^{(k)}\{(g_{n,k}((X^{(k)},Y^{(k)}),((X_i^{*(k)},Y_i^{*(k)}))_{i\in I_{n,k}}))_+\}
\end{align*}
(where $z_+ = \max\{z, 0\}$ for $z \in \R$),
and the $(X_i^{*(k)},Y_i^{*(k)})$ $(i\in I_{n,k})$ are identically distributed for each $k\in\{1,\dots,N_n\}$.
This implies, that there exists $K_n\in\left\{\left\lfloor\frac{n}{\lfloor (\log n)^2\rfloor}\right\rfloor,\left\lceil \frac{n}{\lfloor (\log n)^2\rfloor}\right\rceil\right\}$
and random variables
\[
(\bar X,\bar Y),(\bar{X}_1,\bar Y_1),(\bar{X}_2,\bar Y_2),\dots, (\bar{X}_{K_n},\bar Y_{K_n})
\]
on a probability space $(\bar \Omega, \bar \A, \bar \PROB)$ that are i.i.d and have the same distribution as $(X,Y)$ such that
\begin{align*}
&\max_{k\in\{1,\dots, N_n\}}\EXP^{(k)}\{S_{2,n,k}\} =  \EXP^{(k^*)}\{S_{2,n,k}\}\\
&\quad\leq 
  \bar\EXP\left\{
  \Bigg(
\sup_{f\in\F_n} \left(\bar\EXP \left\{ |(T_{\kappa_n}f)(\bar X)-T_{\kappa_n} \bar Y|^2 \right\}
-
\bar\EXP \{ |m_{\kappa_n}(\bar X)- T_{\kappa_n} \bar Y|^2\}
\vphantom{
\hspace*{2cm}
-
2 \cdot \frac{1}{|I_{n,k}|} \sum_{i\in I_{n,k}}
\left(
|m_n(X_i)-T_{\kappa_n} Y_i|^2
-
|m_{\kappa_n}(X_i)- T_{\kappa_n} Y_i|^2
\right)
}
\right.
\right.
\\
&
\left.
\left.
\vphantom{
\sup_{f\in\F_n} \EXP \left\{ |T_{\kappa_n}f(X)-T_{\kappa_n} Y|^2 \right\}
-
\EXP \{ |m_{\kappa_n}(X)- T_{\kappa_n} Y|^2\}
}
\hspace*{2cm}
-
2 \cdot \frac{1}{K_n} \sum_{i=1}^{K_n}
\left(
|(T_{\kappa_n}f)(\bar X_i)-T_{\kappa_n} \bar Y_i|^2
-
|m_{\kappa_n}(\bar X_i)- T_{\kappa_n} \bar Y_i|^2
\right)
\right)
\Bigg)_+
\right\}
\\
&=: \bar\EXP\{S_{3,n}\}
,
\end{align*}
e.g.,
\[
(\bar X, \bar Y)=(X,Y),\{(\bar X_i,\bar Y_i)\}_{i=1}^{K_n}=\{(X_j^*,Y_j^*)\}_{j\in I_{n,k^*}},
\]
and
\[
(\bar \Omega,\bar \A, \bar \PROB)=(\Omega^{(k^*)},\A^{(k^*)},\PROB^{(k^*)}).
\]
From here we proceed as in the proof of Theorem 1 in Kohler (2026) to get the assertion.
We have for any $u>0$
\begin{align*}
&\bar\PROB(S_{3,n}>u) \\
&\quad\leq
\bar\PROB\left(
\exists f\in \F_n : \bar\EXP \left\{ |(T_{\kappa_n}f)(\bar X)-T_{\kappa_n} \bar Y|^2 \right\}
-
\bar\EXP \{ |m_{\kappa_n}(\bar X)- T_{\kappa_n} \bar Y|^2\}
\vphantom{
\hspace*{2cm}
-
2 \cdot \frac{1}{K_n} \sum_{i=1}^{K_n}
\left(
|(T_{\kappa_n}f)(\bar X_i^*)-T_{\kappa_n} \bar Y_i^*|^2
-
|m_{\kappa_n}(\bar X_i^*)- T_{\kappa_n} \bar Y_i^*|^2
\right)
>u
}
\right.
\\
&
\left.
\vphantom{
\exists f\in \F_n : \EXP \left\{ |(T_{\kappa_n}f)(\bar X)-T_{\kappa_n} \bar Y|^2 \right\}
-
\EXP \{ |m_{\kappa_n}(\bar X)- T_{\kappa_n} \bar Y|^2\}
}
\hspace*{2cm}
-
2 \cdot \frac{1}{K_n} \sum_{i=1}^{K_n}
\left(
|(T_{\kappa_n}f)(\bar X_i)-T_{\kappa_n} \bar Y_i|^2
-
|m_{\kappa_n}(\bar X_i)- T_{\kappa_n} \bar Y_i|^2
\right)
>u
\right)
\\
&\quad\leq
\bar\PROB\left(
\exists f\in \F_n : \bar\EXP \left\{ \left|\frac{(T_{\kappa_n}f)(\bar X)}{\kappa_n}-\frac{T_{\kappa_n} \bar Y}{\kappa_n}\right|^2 \right\}
-
\bar\EXP \left\{ \left|\frac{m_{\kappa_n}(\bar X)}{\kappa_n}- \frac{T_{\kappa_n} \bar Y}{\kappa_n}\right|^2\right\}
\vphantom{
\hspace*{3cm}
-
2 \cdot \frac{1}{K_n} \sum_{i=1}^{K_n}
\left(
|\frac{(T_{\kappa_n}f)(\bar X_i^*)}{\kappa_n}-\frac{T_{\kappa_n} \bar Y_i^*}{\kappa_n}|^2
-
|\frac{m_{\kappa_n}(\bar X_i^*)}{\kappa_n}- \frac{T_{\kappa_n} \bar Y_i^*}{\kappa_n}|^2
\right)
>u
}
\right.
\\
&
\left.
\vphantom{
\exists f\in \F_n : \EXP \left\{ |(T_{\kappa_n}f)(\bar X)-T_{\kappa_n} \bar Y|^2 \right\}
-
\EXP \{ |m_{\kappa_n}(\bar X)- T_{\kappa_n} \bar Y|^2\}
}
\hspace*{2cm}
-
 \frac{1}{K_n} \sum_{i=1}^{K_n}
\left(
\left|\frac{(T_{\kappa_n}f)(\bar X_i)}{\kappa_n}-\frac{T_{\kappa_n} \bar Y_i}{\kappa_n}\right|^2
-
\left|\frac{m_{\kappa_n}(\bar X_i)}{\kappa_n}- \frac{T_{\kappa_n} \bar Y_i}{\kappa_n}\right|^2
\right)
\right.
\\
&\hspace{1cm}\left.
\vphantom{
\hspace*{2cm}
-
2 \cdot \frac{1}{K_n} \sum_{i=1}^{K_n}
\left(
|\frac{(T_{\kappa_n}f)(\bar X_i)}{\kappa_n}-\frac{T_{\kappa_n} \bar Y_i}{\kappa_n}|^2
-
|\frac{m_{\kappa_n}(\bar X_i)}{\kappa_n}- \frac{T_{\kappa_n} \bar Y_i}{\kappa_n}|^2
\right)
>u
}
>\frac{1}{2} \cdot \left(
\frac{u}{\kappa_n^2}
+ \bar\EXP \left\{ \left|\frac{(T_{\kappa_n}f)(\bar X)}{\kappa_n}-\frac{T_{\kappa_n} \bar Y}{\kappa_n}\right|^2 \right\}
-
\bar\EXP \left\{ \left|\frac{m_{\kappa_n}(\bar X)}{\kappa_n}- \frac{T_{\kappa_n} \bar Y}{\kappa_n}\right|^2\right\}
\right)
\right).
\end{align*}
Theorem 11.4 in Gy\"orfi et al. (2002) allows us to conclude
\begin{align*}
&\bar\PROB(S_{3,n}>u)\\
&\quad\leq
14\cdot \sup_{x_1^n\in \supp(X)^n}\mathcal N_1\left(\frac{u}{80\cdot \kappa_n^2},\left\{\frac{1}{\kappa_n}\cdot f : f\in\F_n\right\},x_1^n\right)\cdot\exp\left(-\frac{u\cdot K_n}{5136\cdot\kappa_n^2}\right)\\
&\quad\leq
14\cdot \sup_{x_1^n\in \supp(X)^n}\mathcal N_1\left(\frac{u}{80\cdot \kappa_n},\F_n,x_1^n\right)\cdot\exp\left(-\const[plea3c1]\cdot\frac{u\cdot n}{\kappa_n^2\cdot (\log n)^2}\right).
\end{align*}
Note that we take $\sup_{x_1^n\in \supp(X)^n}$ instead of $\sup_{x_1^n\in (\R^d)^n}$, which can be concluded from the proof of Theorem 11.4.  Finally we can conclude for any $\epsilon>0$
\begin{align*}
\EXP\{S_{3,n}\} 
&\leq
\int_0^{\epsilon} \PROB(S_{3,n}>u)\, du
+
\int_\epsilon^{\infty} \PROB(S_{3,n}>u)\, du
\\
&\leq
\epsilon
+
\int_\epsilon^\infty 
14\cdot \sup_{x_1^n\in \supp(X)}\mathcal N_1\left(\frac{u}{80\cdot \kappa_n},\F_n,x_1^n\right)\cdot\exp\left(-\const[plea3c1]\cdot\frac{u\cdot n}{\kappa_n^2\cdot (\log n)^2}\right) \, du.
\end{align*}
     \hfill $\Box$

To bound the covering number in Lemma \ref{lea3} we proceed similarly as in Kohler (2026) and extend Lemma 12 from Kohler (2026) to our setting.

\begin{lemma}
  \label{le2ko24}
  Let $\M$ be a $d^*$-dimensional Lipschitz-manifold.
  Let $k \geq 3$, $\kappa\geq 1$ and let 
  $A,B,C \geq 1$.
  Let $\sigma:\R \rightarrow \R$ be $k$-times differentiable
  such that all derivatives up to order $k$ are bounded on $\R$.
  Let $L,r,K_n \in \N$ and
  let $\F$
  be the set of all functions $f_{\bw}$ defined by
\begin{equation}
f_\bw(x) = \sum_{j=1}^{K_n} w_{1,1,j}^{(L)} \cdot f_{j,1}^{(L)}(x)
\end{equation}
for some $w_{1,1,1}^{(L)}, \dots, w_{1,1,K_n}^{(L)} \in \mathbb{R}$, where
$f_{j,1}^{(L)}=f_{\bw,j,1}^{(L)}$ are recursively defined by
\begin{equation}
f_{k,i}^{(l)}(x) = 
f_{\bw,k,i}^{(l)}(x) = 
\sigma\left(\sum_{j=1}^{r} w_{k,i,j}^{(l-1)}\cdot f_{k,j}^{(l-1)}(x) + w_{k,i,0}^{(l-1)} \right)
\end{equation}
for some $w_{k,i,0}^{(l-1)}, \dots, w_{k,i, r}^{(l-1)} \in \mathbb{R}$
$(l=2, \dots, L)$
and
\begin{equation}
f_{k,i}^{(1)}(x) = 
f_{\bw,k,i}^{(1)}(x) = 
\sigma \left(\sum_{j=1}^d w_{k,i,j}^{(0)}\cdot x^{(j)} + w_{k,i,0}^{(0)} \right)
\end{equation}
for some $w_{k,i,0}^{(0)}, \dots, w_{k,i,d}^{(0)} \in \mathbb{R}$,
where the weight vector $\bw$
  satisfies
  \begin{equation}
    \label{le2ko24eq1}
    \sum_{j=1}^{K_n} |w_{1,1,j}^{(L)}| \leq C,
    \end{equation}
  \begin{equation}
    \label{le2ko24eq2}
    |w_{k,i,j}^{(l)}| \leq B \quad (k \in \{1, \dots, K_n\},
    i,j \in \{1, \dots, r\}, l \in \{1, \dots, L-1\})
    \end{equation}
and
  \begin{equation}
    \label{le2ko24eq3}
    |w_{k,i,j}^{(0)}| \leq A \quad (k \in \{1, \dots, K_n\},
    i \in \{1, \dots, r\}, j \in \{1, \dots,d\}).
  \end{equation}
  Then we have for any $1 \leq p < \infty$, $0 < \epsilon < 1$ and
  $x_1^n \in \M^n$
  \begin{eqnarray*}
&&\Nu_p \left(
\epsilon, \{ T_\kappa f \, : \, f \in \F \}, x_1^n
\right)
\leq \left(\const\cdot \frac{\kappa^p} {\epsilon^p}\right)^{\const \cdot B^{(L-1)\cdot d^*} \cdot A^{d^*} \cdot \left(\frac{C}{\epsilon}\right)^{d^*/k}+ \const
}.
\\
  \end{eqnarray*}
  
  \end{lemma}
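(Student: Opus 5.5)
The plan is to follow the proof of Lemma 12 in Kohler (2026) and change only the place where the dimension enters. That place is the covering of the input domain, which is now the $d^*$-dimensional manifold $\M$ rather than a $d$-dimensional cube.

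\emph{Step 1: smoothness of the subnetworks along the charts.} Because of (\ref{le2ko24eq1}), every $f\in\F$ lies in the absolute convex hull of $C$ times the class $\G$ of single subnetworks $x\mapsto f_{\bw,k,1}^{(L)}(x)$. Each such $g\in\G$ is a composition of $L$ layers of $\sigma$ with affine maps. Since $\sigma$ has bounded derivatives up to order $k$, the chain rule (Faà di Bruno) gives bounds on all derivatives of $g$ up to order $k$ of the form $\const\cdot B^{(L-1)k}A^k$. For a parametrization $\psi_l$ of $\M$, the map $g\circ\psi_l$ is not smooth in general, because $\psi_l$ is only bi-Lipschitz. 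So I will not smooth $g\circ\psi_l$ directly. Instead I will transport the approximation argument to $\M$ using only the Lipschitz property of $\psi_l$ and the covering properties of $\M$.

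\emph{Step 2: local Taylor approximation on the manifold.} Cover $\M$ by cells of diameter $h$. Taking the images under $\psi_l$ of a grid of side $h/C_{\psi,2}$ in $[0,1]^{d^*}$ yields $N_h\le \const\, h^{-d^*}$ cells, by (\ref{de2eq1}) and compactness. On each cell I take a point $z$ and replace $g$ by its Taylor polynomial of degree $k-1$ around $z$ in the ambient variable $x\in\R^d$. The error is at most $\const B^{(L-1)k}A^k h^k$.

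I then set $h$ so that this error equals $\eta$, where $\eta$ is the accuracy required for elements of $\G$. This gives $N_h\asymp (B^{L-1}A)^{d^*}\eta^{-d^*/k}$. Each local polynomial lies in a linear space of dimension $\binom{d+k-1}{d}$, which is a constant. Hence $\G$ restricted to $x_1^n\in\M^n$ is approximated, uniformly to accuracy $\eta$, by a piecewise polynomial class of linear dimension $D\le \const B^{(L-1)d^*}A^{d^*}\eta^{-d^*/k}+\const$.

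\emph{Step 3: from $\G$ to $\F$.} The absolute convex hull $C\cdot\overline{\mathrm{absconv}}(\G)$ is then within $C\eta$ of a bounded set in a $D$-dimensional linear space of functions. I choose $\eta=\epsilon/(2C)$. Truncating with $T_\kappa$ keeps the values in $[-\kappa,\kappa]$, and a standard volumetric bound, or Theorem 9.5 of Györfi et al. (2002) combined with the VC dimension of truncated linear spaces, covers the truncated $D$-dimensional class in $L_p$ at scale $\epsilon/2$ by $(\const\kappa^p/\epsilon^p)^{\const D}$ functions. Adding the two approximation errors yields the stated bound with exponent $\const B^{(L-1)d^*}A^{d^*}(C/\epsilon)^{d^*/k}+\const$.

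\emph{Main obstacle.} The hardest step is Step 2, namely counting cells by $h^{-d^*}$ rather than $h^{-d}$ while keeping the Taylor remainder controlled. This works because the remainder is measured in the ambient Euclidean distance, and every point of $\M$ in a cell lies within distance $h$ of its center. The bi-Lipschitz upper constant $C_{\psi,2}$ controls cell diameters, and the finite number $r$ of charts only multiplies the cell count by a constant. I expect the rest of the argument to carry over essentially verbatim from Kohler (2026).
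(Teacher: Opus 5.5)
Your proposal is correct and is essentially the paper's argument. It uses an order-$k$ derivative bound of size $C\cdot B^{(L-1)k}A^k$ and degree-$(k-1)$ Taylor polynomials on cells of diameter $h\asymp\bigl(\epsilon/(C B^{(L-1)k}A^k)\bigr)^{1/k}$. It then observes that, thanks to the bi-Lipschitz charts, only a constant times $h^{-d^*}$ such cells are needed, and it applies Theorems 9.4 and 9.5 of Gy\"orfi et al.\ (2002) to the truncated piecewise polynomial linear space.

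The differences are cosmetic. You bound the derivatives of single subnetworks and pass to $\mathcal{F}$ via $\sum_j |w_{1,1,j}^{(L)}|\le C$ and linearity, whereas the paper puts the factor $C$ directly into the derivative bound for $f_{\mathbf{w}}$. You take chart images of a parameter grid as cells, which overlap across charts, so you should disjointify them to get a partition. The paper instead uses an ambient grid of cubes and counts those meeting $\mathcal{M}$ in Lemma~\ref{le0.1} by the same chart argument.
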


\noindent
    {\bf Remark 3.} It follows from Lemma 12 in Kohler (2026) that the
    assertion also holds if $d^*=d$ and $\M$ is contained in some
    compact subset of $\R^d$ (but not necessarily a Lipschitz-manifold).\\

To prove Lemma \ref{le2ko24} we need the following result which
is a slight modification of Lemma 1 a) in Kohler, Langer and Reif (2023).

\begin{lemma}
  \label{le0.1}
  Let $\M$ be a $d^*$-dimensional Lipschitz-manifold.
  Let $h \in (0,1]$ and let $\P$ be a partition of $\R^d$ into cubes with side length $h$.
      Then 
      \[
      | \{ C \in \P \, : \, C \cap \M \neq \emptyset \} |
      \leq
      \const[c1le0] \cdot
      \left( \frac{1}{h} \right)^{d^*},
      \]
      where
      $\const[c1le0]{} = r \cdot (2 \cdot C_{\psi,2} \cdot \sqrt{d^*} +4)^{d^*}$.\\
  \end{lemma}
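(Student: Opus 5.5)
The plan is to cover $\M$ by the $r$ charts and count, chart by chart, the cubes of $\P$ that meet each image $\psi_l([0,1]^{d^*})$. Since $\M \subseteq \bigcup_{l=1}^r U_l$, we have $\M = \bigcup_{l=1}^r (\M \cap U_l) = \bigcup_{l=1}^r \psi_l((0,1)^{d^*})$. Hence
\[
| \{ C \in \P \, : \, C \cap \M \neq \emptyset \} |
\leq
\sum_{l=1}^r | \{ C \in \P \, : \, C \cap \psi_l([0,1]^{d^*}) \neq \emptyset \} |,
\]
and it suffices to show that each summand is at most $(2 \cdot C_{\psi,2} \cdot \sqrt{d^*} +4)^{d^*} \cdot h^{-d^*}$.

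Fix $l$ and grid the parameter cube. Set $\delta = h/(C_{\psi,2}\sqrt{d^*})$ and partition $[0,1]^{d^*}$ into cubes $Q_1,\dots,Q_N$ of side length at most $\delta$, using $\lceil 1/\delta \rceil$ intervals per coordinate. Then
\[
N \leq \lceil 1/\delta\rceil^{d^*} \leq (C_{\psi,2}\sqrt{d^*}/h + 1)^{d^*} \leq (C_{\psi,2}\sqrt{d^*}+1)^{d^*} h^{-d^*},
\]
where the last step uses $h \leq 1$. Each $Q_j$ has Euclidean diameter at most $\sqrt{d^*}\,\delta$. By the upper Lipschitz bound (\ref{de2eq1}), $\psi_l(Q_j)$ therefore has diameter at most $C_{\psi,2}\sqrt{d^*}\,\delta = h$. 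Only the upper bound in (\ref{de2eq1}) is needed here.

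The next step counts how many cubes of $\P$ a set of diameter at most $h$ can meet. Pick any point $z \in \psi_l(Q_j)$; the set is contained in the closed ball of radius $h$ around $z$. In each coordinate this ball spans an interval of length $2h$, which meets at most $3$ consecutive grid intervals of length $h$, and so the ball meets at most $3^d$ cubes of $\P$. Multiplying gives $N \cdot 3^d$, which is of the right order but has the wrong constant: it depends on $d$ rather than $d^*$.

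This mismatch in the constant is the main obstacle. To obtain the stated $(2C_{\psi,2}\sqrt{d^*}+4)^{d^*}$, I would not bound the number of neighbouring cubes per set. Instead I would refine the parameter grid so that each small image meets only a bounded number of cubes, and absorb all the slack into the base $2C_{\psi,2}\sqrt{d^*}+4$.

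Concretely, I would follow the argument of Lemma 1 a) in Kohler, Langer and Reif (2023). Choose the parameter grid coarse enough that the number of parameter cells is at most $(2C_{\psi,2}\sqrt{d^*}+4)^{d^*}h^{-d^*}$. Then associate each cube $C \in \P$ that meets $\psi_l([0,1]^{d^*})$ with a parameter cell containing a preimage of a point of $C$. The goal is to show that this association is injective up to a factor already absorbed into the base. I expect some tuning of the grid parameters (the factor $2$ and the additive $4$) to be needed to make the constant come out exactly. If that fails, a constant depending on $d$ still yields the lemma up to the value of $\const[c1le0]$, which is all that is used later.
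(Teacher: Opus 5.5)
Your first three steps are sound. With $\delta=h/(C_{\psi,2}\sqrt{d^*})$, every $\psi_l(Q_j)$ has diameter at most $h$, and you get
\[
|\{ C \in \P \, : \, C \cap \M \neq \emptyset \}| \leq r \cdot (C_{\psi,2}\sqrt{d^*}+1)^{d^*} \cdot 3^d \cdot h^{-d^*} .
\]
You can even use $2^d$ here, since each coordinate projection of a set of diameter at most $h$ meets at most two grid intervals.

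The genuine gap is your last paragraph. No refinement of the parameter grid and no ``injective association'' of cubes to parameter cells can give the constant $r\,(2C_{\psi,2}\sqrt{d^*}+4)^{d^*}$, because with this $d$-free constant the statement is false for $d^*<d$. Here is a counterexample.
\begin{itemize}
\item Take $d^*=1$, $d=2k$, $u=k^{-1/2}(e_1+\dots+e_k)$ and $w=k^{-1/2}(e_{k+1}+\dots+e_{2k})$. Let $\P$ be the standard grid.
\item Let $\M$ be the boundary of the square with vertices $a$, $a+\ell u$, $a+\ell u+\ell w$, $a+\ell w$, where $a$ is generic and $\ell=N\sqrt{k}\,h$ with $N\in\N$.
\item Rescaled arc-length parametrizations of four open arcs of length $1.2\,\ell$, centred at the midpoints of the sides, are bi-Lipschitz charts. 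So $r=4$ and $C_{\psi,2}=1.2\,\ell$.
\item Along each side, $k$ coordinates each sweep an interval of length $Nh$. For generic $a$ the side therefore crosses $kN$ grid hyperplanes at distinct times and meets $kN+1$ cubes.
\item Opposite sides meet disjoint sets of cubes, and adjacent sides share only the corner cube.
\end{itemize}
Hence $\M$ meets at least $4kN=4\sqrt{k}\,\ell/h$ cubes, whereas the claimed bound is $(9.6\,\ell+16)/h$. For $k=16$, $N=1$, $h=1$ this reads $64>54.4$, and the ratio grows like $\sqrt{d}$.

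The paper's own proof reaches the stated constant only through its final counting step. There it bounds the number of cubes of $\P$ meeting a cube of side length $C_{\psi,2}\sqrt{d^*}\,h$ in $\R^d$ by $(C_{\psi,2}\sqrt{d^*}+2)^{d^*}$. The correct exponent there is $d$, which is exactly the $d$-dependence you ran into.

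Otherwise your argument has the same structure as the paper's (grid on the parameter cube, upper Lipschitz bound, count of neighbouring cubes), just with a finer parameter grid. So the conclusion you labelled a fallback is the right one: state and prove the lemma with a constant such as $r\,(C_{\psi,2}\sqrt{d^*}+1)^{d^*}\,2^d$. That is all the proof of Lemma \ref{le2ko24} needs, since it only uses $|\Pi^*|\le c\,(1/h)^{d^*}$ with $c$ independent of $h$ (and allowed to depend on $d$).
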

    {\bf Proof.}
    	For $k_1,\dots,k_{d^*}\in\{0,1,\dots,\lceil 1/h\rceil -1\}$ set
    	\[
    		A_{k_1,\dots,k_{d^*}}=
 			[k_1 \cdot h, \min\{(k_1+1) \cdot h, 1\})
           	\times \dots \times
            [k_{d^*} \cdot h, \min\{(k_{d^*}+1) \cdot h, 1\}).
    	\]
    	We can conclude from the definition of a Lipschitz-manifold
    	\begin{align*}
    		\M 
    		&=
    		\bigcup_{j=1}^r \M\cap U_j
    		=
    		\bigcup_{j=1}^r\psi_j\left((0,1)^{d^*}\right)
    		\\
    		&\subseteq
    		\bigcup_{j=1}^r
    		\bigcup_{
				k_1, \dots, k_{d^*} \in \{0,1, \dots, \lceil 1/h \rceil - 1 \}
    		}
    		\psi_j
    			\left(
		   			A_{k_1,\dots,k_{d^*}} 
    			\right).
    	\end{align*}
This implies 
\begin{align*}
&| \{ C \in \P \, : \, C \cap \M \neq \emptyset \} |
\\
&\leq \sum_{j=1}^{r} \sum_{k_1=0}^{\lceil \frac{1}{h}\rceil - 1} \dots \sum_{k_{d^*}=0}^{\lceil \frac{1}{h}\rceil -1}
|\{ C \in \P \, : \, C \cap
\psi_j
\left(
	A_{k_1,\dots,k_{d^*}}
\right)
\neq \emptyset \}|
\\
&\leq r\cdot 2^{d^*}\cdot \left(\frac{1}{h}\right)^{d^*}\cdot
\max_{j=1, \dots, r}
|\{ C \in \P \, : \, C \cap
\psi_j
\left(
	A_{k_1,\dots,k_{d^*}}
\right)
\neq \emptyset \}|
\end{align*}  
and it suffices to show 
    \begin{eqnarray}
      \label{ple0eq1}
      &&
\max_{j=1, \dots, r}
	  |\{ C \in \P \, : \, C \cap
	  \psi_j
	  \left(
	  	A_{k_1,\dots,k_{d^*}}
	  \right)
      \neq \emptyset \}| 
      \leq (C_{\psi,2} \cdot \sqrt{d^*} +2)^{d^*}.
    \end{eqnarray}
Fix $j \in \{1, \dots, r\}$.
    Each point of a cube
    \[
    [k_1 \cdot h, (k_1+1) \cdot h)
           \times \dots \times
                  [k_{d^*} \cdot h, (k_{d^*}+1) \cdot h)
    \]
    has a distance of at most $\frac{1}{2}\cdot \sqrt{d^*}\cdot h$ from its center.
    Hence, $A_{k_1,\dots,k_{d^*}}$ is contained in a ball of radius $\frac{1}{2}\cdot \sqrt{d^*}\cdot h$, and by Lipschitz continuity $\psi_j (A_{k_1,\dots,k_{d^*}})$ is contained in a cube of side length
        $C_{\psi,2} \cdot \sqrt{d^*} \cdot h$. The number of cubes from $\P$ that intersect that cube is bounded from above by
\[
\left(
	\frac{C_{\psi,2} \cdot \sqrt{d^*} \cdot h}{h} +2
\right)^{d^*}
= (C_{\psi,2} \cdot \sqrt{d^*}+2)^{d^*},
\]
since each cube from the partition $\P$ has side length $h$.
     \hfill $\Box$

\noindent
    {\bf Proof of Lemma \ref{le2ko24}.}
    Let $R>0$ be such that $\M \subseteq \{x \in \R^d \, : \, \|x\| \leq R\}$.
    The first step of the proof is to show, that
    for any $f_{\bw} \in \F$, any $x \in \R^d$ with $\|x\| \leq R$, any $k\in\N$ and any $s_1, \dots, s_k \in \{1, \dots, d\}$
    \begin{equation}
      \label{ple2ko24eq1}
      \left|
      \frac{\partial^k f_{\bw}}{\partial x^{(s_1)} \dots \partial x^{(s_k)}} (x)
      \right| \leq \const \cdot C \cdot B^{(L-1) \cdot k} \cdot A^k =: c
      .
      \end{equation}
This follows from the first step of the proof of Lemma 12 in Kohler (2024).    

In the second step of the proof we will show
\begin{equation}
	\label{ple12eq3}
	\Nu_p \left(
	\epsilon, \{T_\kappa f \, : \, f \in \F \}, x_1^n
	\right)
	\leq
	\Nu_p  \left(
	\frac{\epsilon}{2}, T_\kappa \G\circ \Pi, x_1^n
	\right),     
\end{equation}
where $\G$ denotes the set of all polynomials of degree less than or equal to $k-1$
and $\Pi$ is a partition of $\mathbb{R}^d$ into cubes of side length
$\left(\const[c1le12]\cdot \frac{\epsilon}{c}\right)^{1/k}$,
where $\const[c1le12]=\const[c1le12](d,k)>0$ is a suitable small constant.
Here $T_\kappa \G\circ \Pi$ is the class of all functions
whose restriction on each cube in $\Pi$ lies in $T_\kappa \G$.

Let  $(T f_\bw)_{k-1,u}$ be the multivariate Taylor polynomial of $f_\bw$ of degree $k-1$ around $u\in \R^d$, i.e.
\begin{align*}
      &(Tf)_{k-1,u}(x)
      \\
      &
            =
      \sum_{j_1, \dots, j_d \in \N_0, \atop
        j_1 + \dots + j_d \leq k-1}
            \frac{1}{j_1! \cdots j_d!}\cdot
      \frac{
\partial^{j_1+\dots + j_d} f
      }{
\partial^{j_1} x^{(1)} \dots \partial^{j_d} x^{(d)}
      }
      (u)
      \cdot
      (x^{(1)}-u^{(1)})^{j_1}  \cdots  (x^{(d)}-u^{(d)})^{j_d}.
\end{align*}
For each $I\in \Pi$ fix some $u\in I$. By a multivariate Taylor theorem  we get
as in the proof of Lemma 1 in Kohler (2014)
for each $x\in I$
\begin{eqnarray*}
  &&
  |f_\bw(x)-(T f_\bw)_{k-1,u}(x)|
  \\
  &&
  =
  \Bigg|
  f_\bw(x)-(T f_\bw)_{k-2,u}(x)
  \\
  &&
  \quad
  -
  \sum_{j_1, \dots, j_d \in \N_0, \atop
        j_1 + \dots + j_d = k-1}
            \frac{1}{j_1! \cdots j_d!}\cdot
      \frac{
\partial^{j_1+\dots + j_d} f_\bw
      }{
\partial^{j_1} x^{(1)} \dots \partial^{j_d} x^{(d)}
      }
      (u)
      \cdot
      (x^{(1)}-u^{(1)})^{j_1}  \cdots  (x^{(d)}-u^{(d)})^{j_d}
  \Bigg|
  \\
  &&
  =
  \Bigg|
\sum_{j_1, \dots, j_d \in \N_0, \atop
        j_1 + \dots + j_d = k-1}
\frac{k-1}{j_1! \cdots j_d!}\cdot
\int_0^1
(1-t)^{k-2}
\cdot
      \frac{
\partial^{j_1+\dots + j_d} f_\bw
      }{
\partial^{j_1} x^{(1)} \dots \partial^{j_d} x^{(d)}
      }
      (u+t \cdot (x-u)) \, dt
      \\
      &&
      \hspace*{4cm}
      \cdot
      (x^{(1)}-u^{(1)})^{j_1}  \cdots  (x^{(d)}-u^{(d)})^{j_d}
  \\
  &&
  \quad
  -
  \sum_{j_1, \dots, j_d \in \N_0, \atop
        j_1 + \dots + j_d = k-1}
            \frac{1}{j_1! \cdots j_d!}\cdot
      \frac{
\partial^{j_1+\dots + j_d} f_\bw
      }{
\partial^{j_1} x^{(1)} \dots \partial^{j_d} x^{(d)}
      }
      (u)
      \cdot
      (x^{(1)}-u^{(1)})^{j_1}  \cdots  (x^{(d)}-u^{(d)})^{j_d}
      \Bigg|
  \\
  &&
  =
  \Bigg|
\sum_{j_1, \dots, j_d \in \N_0, \atop
        j_1 + \dots + j_d = k-1}
\frac{k-1}{j_1! \cdots j_d!}
\cdot
      (x^{(1)}-u^{(1)})^{j_1}  \cdots  (x^{(d)}-u^{(d)})^{j_d}
\\
&&
\quad
\cdot
\int_0^1
(1-t)^{k-2}
\cdot
\left(
\frac{
\partial^{j_1+\dots + j_d} f_\bw
      }{
\partial^{j_1} x^{(1)} \dots \partial^{j_d} x^{(d)}
      }
(u+t \cdot (x-u))
-
      \frac{
\partial^{j_1+\dots + j_d} f_\bw
      }{
\partial^{j_1} x^{(1)} \dots \partial^{j_d} x^{(d)}
      }
      (u)
\right)
\, dt
\Bigg|
\\
&&
\leq
\sum_{j_1, \dots, j_d \in \N_0, \atop
        j_1 + \dots + j_d = k-1}
\frac{k-1}{j_1! \cdots j_d!}
\cdot
      |x^{(1)}-u^{(1)}|^{j_1}  \cdots  |x^{(d)}-u^{(d)}|^{j_d}
\\
&&
\quad
\cdot
\int_0^1
(1-t)^{k-2}
\cdot
\left|
\frac{
\partial^{j_1+\dots + j_d} f_\bw
      }{
\partial^{j_1} x^{(1)} \dots \partial^{j_d} x^{(d)}
      }
(u+t \cdot (x-u))
-
      \frac{
\partial^{j_1+\dots + j_d} f_\bw
      }{
\partial^{j_1} x^{(1)} \dots \partial^{j_d} x^{(d)}
      }
      (u)
\right|
\, dt.
  \end{eqnarray*}
By the mean value theorem and
(\ref{ple2ko24eq1}) we get for any $j_1, \dots, j_d \in \N_0$
with $j_1 + \dots + j_d = k-1$
\begin{eqnarray*}
  &&
\left|
\frac{
\partial^{j_1+\dots + j_d} f_\bw
      }{
\partial^{j_1} x^{(1)} \dots \partial^{j_d} x^{(d)}
      }
(u+t \cdot (x-u))
-
      \frac{
\partial^{j_1+\dots + j_d} f_\bw
      }{
\partial^{j_1} x^{(1)} \dots \partial^{j_d} x^{(d)}
      }
      (u)
      \right|
      \\
      &&
      \leq
      \sum_{i=1}^d
      \Bigg|
\frac{
\partial^{j_1+\dots + j_d} f_\bw
      }{
\partial^{j_1} x^{(1)} \dots \partial^{j_d} x^{(d)}
      }
(u^{(1)}, \dots, u^{(i-1)},  u^{(i)}+t \cdot (x^{(i)}-u^{(i)}), \dots,
u^{(d)}+t \cdot (x^{(d)}-u^{(d)}))
\\
&&
\quad
-
\frac{
\partial^{j_1+\dots + j_d} f_\bw
      }{
\partial^{j_1} x^{(1)} \dots \partial^{j_d} x^{(d)}
      }
(u^{(1)}, \dots, u^{(i)},  u^{(i+1)}+t \cdot (x^{(i+1)}-u^{(i+1)}), \dots,
u^{(d)}+t \cdot (x^{(d)}-u^{(d)}))
\Bigg|
\\
&&
\leq \sum_{i=1}^d c \cdot |u^{(i)}+t \cdot (x^{(i)}-u^{(i)}) - u^{(i)}|
\\
&&
= c \cdot t \cdot \sum_{i=1}^d |x^{(i)}-u^{(i)}|.
  \end{eqnarray*}
Hence,
\begin{align*}
  &|f_\bw(x)-(T f_\bw)_{k-1,u}(x)|
  \\
    & 
    \leq
    \sum_{j_1, \cdot \dots \cdot, j_d \in \N_0, \atop
    	j_1+\dots + j_d = k-1}
    	\frac{k-1}{j_1!\cdots j_d!}
    	\cdot
    	c
        \cdot
        \sum_{i=1}^d \left|x^{(i)}-u^{(i)}\right|
        \cdot
        \left|x^{(1)}-u^{(1)}\right|^{j_1} \cdots  \left|x^{(d)}-u^{(d)}\right|^{j_d}
        \\
        &
        \hspace*{3cm}
        \cdot
        \int_0^1 t \cdot (1-t)^{k-2} dt 
      \\
      &
      =
      \sum_{j_1, \cdot \dots \cdot, j_d \in \N_0, \atop
    	j_1+\dots + j_d = k-1}
    	\frac{1}{j_1!\cdots j_d! \cdot k}
    	\cdot
    	c
        \cdot
        \sum_{i=1}^d \left|x^{(i)}-u^{(i)}\right|
        \cdot
        \left|x^{(1)}-u^{(1)}\right|^{j_1} \cdots  \left|x^{(d)}-u^{(d)}\right|^{j_d}
        \\
        &
        \leq
        \sum_{j_1, \cdot \dots \cdot, j_d \in \N_0, \atop
    	j_1+\dots + j_d = k}
    	\frac{1}{j_1!\cdots j_d! }
    	\cdot
    	c
        \cdot
        \left|x^{(1)}-u^{(1)}\right|^{j_1} \cdots  \left|x^{(d)}-u^{(d)}\right|^{j_d}
        \\
        &
        =
        \frac{c}{k!} \cdot \left(
        \sum_{i=1}^d \left|x^{(i)}-u^{(i)}\right|
        \right)^k
        \\
        &
        \leq
        c \cdot \frac{d^k}{k!} \cdot \|x-u\|_\infty^k
        \leq
        c \cdot \frac{d^k}{k!} \cdot \const[c1le12]\cdot \frac{\epsilon}{c}
        =
        \const[c1le12] \cdot \frac{d^k}{k!} \cdot \epsilon,
\end{align*}
where we have used the multinomial theorem.
By repeating this argument for every cube $I\in\Pi$ and by choosing $\const[c1le12]$ small enough we can see that for each $f_\bw$ we can find $g \in \mathcal{G}\circ \Pi$ such that
\[
|f_\bw(x)-g(x)|\leq \frac{\epsilon}{2}
\]
  holds for all $x\in \R^d$, which implies (\ref{ple12eq3}).  
   The last step is to complete the proof of Lemma \ref{le2ko24}.
   For 
   \[
   \Pi^*:=\{I\in\Pi:I\cap \M\neq \emptyset\}
   \cup \left\{\mathbb{R}^d\setminus 
   \bigcup_{I\in\Pi:I\cap \M\neq \emptyset}I\right\}
   \]
   we have
   \begin{align}\label{ple12eq3b}
   	\Nu_p  \left(
	\frac{\epsilon}{2}, T_\kappa \G\circ \Pi, x_1^n
	\right)
	\leq
	\Nu_p  \left(
	\frac{\epsilon}{2}, T_\kappa \G\circ \Pi^*, x_1^n
	\right),
   \end{align}
   since $x_1^n\in\M^n$.
   There are
   \[
   \binom{d+k-1}{d}
   \]
   many monomials in $d$ variables of degree at most $k-1$.
   This together with Lemma \ref{le0.1} allows us to conclude that
   $\mathcal{G}\circ \Pi^*$ is a linear vector space of dimension
    \[
	    \binom{d+k-1}{d} \cdot \left|\Pi^*\right| \leq \const[c4le12] \cdot
            \left(\frac{c}{\epsilon}\right)^{d^*/k}.
    \]
   With this we derive from
   Theorem 9.4 and Theorem 9.5 in Györfi et al. (2002)
   \begin{equation}
     \label{ple12eq3c}
    	\mathcal{N}_p(\frac{\epsilon}{2}, T_\kappa \mathcal{G} \circ \Pi^*, x_1^n)
    	\leq 3 \left(\frac{2e(2\kappa)^p}{(\epsilon/2)^p}\log\left(\frac{3e(2 \kappa)^p}{(\epsilon/2)^p}\right)\right)^{\const[c4le12] \cdot \left(\frac{c}{\epsilon}\right)^{{d^*}/k}+1}.
    \end{equation}
    The claim follows from (\ref{ple12eq3}), (\ref{ple12eq3b}) and (\ref{ple12eq3c}).

\hfill $\Box$
  
        \subsection{Proof of Theorem \ref{th1}}
        \label{se5se4}
        We prove a) and b) at the same time. To do this, we let $d^* \in \{1, \dots, d\}$, assume
        \[
supp(X) \subseteq \M,
        \]
        and assume that for $d^*<d$ $\M$ is a $d^*$-dimensional Lipschitz-manifold, and for $d^*=d$ we assume that $\M$ is contained in a compact subset of $\R^d$.

        We give a proof similar to the proof of Theorem 1 in Kohler (2026).
Throughout the proof we assume w.l.o.g. that $n$ is sufficiently large
and that $\|m\|_\infty\leq \kappa_n$ holds. Since $\M$ is bounded, we
can choose $A\geq 1$ with $\M\subseteq [-A,A)^d$.
Set
\[
K=\left\lceil \const\cdot n^{\frac{1}{2p+d^*}}\right\rceil, 
\]
\[
\tilde{K}_n = r\cdot K^d\text{ and}
\]
\[
N_n = \left\lceil \const \cdot n^{1+\frac{4p+3d}{2p+d^*}}\right\rceil. 
\]
Set
\[
J_n=K_n\cdot \left(
r+2
+(L-2)\cdot r \cdot (r+1)
+ r\cdot (d+1)
\right)
\]
and
\[
J^*_n=N_n\cdot \tilde{K}_n\cdot \left(
r+2
+(L-2)\cdot r \cdot (r+1)
+ r\cdot (d+1)
\right).
\]
By using Theorem \ref{th3} with $K$, $A$, $L$ and $r$
we know that there exists a weight vector $$\bw^*\in \R^{J^*_n}$$ of a neural network
\[
f_{\bw^*}(x)=\sum_{k=1}^{N_n\cdot\tilde{K}_n}\left(\bw^*\right)_{1,1,k}^{(L)}
\cdot f^{(L)}_{\bw^*,k,1}(x),
\]
where each $f^{(L)}_{\bw^*,k,1}$ is a subnetwork 
consisting of $L$ layers and $r$ neurons per layer, such that
\begin{equation}\label{pth1eq1}
\sup_{x\in [-A,A)^d} |f_{\bw^*}(x)-m(x)|\leq \frac{\const}{\tilde{K}_n^{p/d}}
\end{equation}
and
\begin{equation*}
|\left(\bw^*\right)_{1,1,k}^{(L)}|\leq \frac{\const[c6pth1]\cdot \tilde{K}_n^{(q+d)/d}}{N_n}\quad(k=1,\dots,N_n\cdot\tilde{K}_n).
\end{equation*}
Note that here we replace each of the $f_k$'s in the outer sum of the space $\HH$ of Theorem \ref{th3} with
\[
f_k=\frac{1}{N_n}\sum_{i=1}^{N_n}f_k.
\]
Furthermore, the weights of this network satisfy
\[
|\left(\bw^*\right)_{k,i,j}^{(l)}| \leq \const \quad \mbox{for } l=1, \dots, L-1
\]
and
\[
|\left(\bw^*\right)_{k,i,j}^{(0)}| \leq \const \cdot (\log n) \cdot n^\tau.
\]
Set
\[
\epsilon_n = \frac{\const}{n\cdot\sqrt{N_n\cdot\tilde{K}_n}}\geq \frac{\const[c1pth1]}{n^{(2p+2d)\cdot \tau+1.5}},
\]
where the inequality follows from
\[
n\cdot \sqrt{N_n\cdot \tilde{K}_n}
\leq \const[cn1pth1]\cdot n^{\frac{3}{2}}\cdot n^{\frac{4p+3d}{4p+2d^*}}\cdot n^{\frac{d}{4p+2d^*}}
=\const[cn1pth1]\cdot n^{(2p+2d)\cdot \tau+1.5}
.
\]
Let $A_n$ be the event such that firstly there exist pairwise distinct $j_1,\dots,j_{N_n\cdot\tilde{K}_n}\in\{1,\dots,K_n\}$ such that the weight vector $\bw^{(0)}\in\R^{J_n}$ satisfies
\[
| (\bw^{(0)})_{j_s,k,i}^{(l)}-(\bw^*)_{s,k,i}^{(l)}| \leq \epsilon_n
\quad \mbox{for all } k,i\mbox{ and }l \in \{0, \dots, L-1\},
s \in \{1, \dots, N_n \cdot \tilde{K}_n \}
\]
and such that secondly
\[
\max_{i=1, \dots, n} |Y_i|^2 \leq \kappa_n
\]
holds.
In the proof we split the $L_2$ error of $m_n$ into a sum of several terms and
bound each term separately. In the following we set 
\[
m_{\kappa_n}(x)=\EXP\{ T_{\kappa_n} Y | X=x \}.
\]
Then we have
\begin{eqnarray*}
&&
\int | m_n(x)-m(x)|^2 \PROB_X (dx)
\\
&&
=
\left(
\EXP \left\{ |m_n(X)-Y|^2 | \D_n \right\}
-
\EXP \{ |m(X)-Y|^2\}
\right)
\cdot 1_{A_n}
\\
&&
\quad
+
\int | m_n(x)-m(x)|^2 \PROB_X (dx)
\cdot 1_{A_n^c}
\\
&&
=
\Big[
\EXP \left\{ |m_n(X)-Y|^2 | \D_n \right\}
-
\EXP \{ |m(X)-Y|^2\}
\\
&&
\hspace*{2cm}
- \left(
\EXP \left\{ |m_n(X)-T_{\kappa_n} Y|^2 | \D_n \right\}
-
\EXP \{ |m_{\kappa_n}(X)- T_{\kappa_n} Y|^2\}
\right)
\Big] \cdot 1_{A_n}
\\
&&
\quad +
\Big[
\EXP \left\{ |m_n(X)-T_{\kappa_n} Y|^2| \D_n \right\}
-
\EXP \{ |m_{\kappa_n}(X)- T_{\kappa_n} Y|^2\}
\\
&&
\hspace*{2cm}
-
2 \cdot \frac{1}{n} \sum_{i=1}^n
\left(
|m_n(X_i)-T_{\kappa_n} Y_i|^2
-
|m_{\kappa_n}(X_i)- T_{\kappa_n} Y_i|^2
\right)
\Big] \cdot 1_{A_n}
\\
&&
\quad
+\Big[
2 \cdot \frac{1}{n} \sum_{i=1}^n
|m_n(X_i)-T_{\kappa_n} Y_i|^2
-
2 \cdot \frac{1}{n} \sum_{i=1}^n
|m_{\kappa_n}(X_i)- T_{\kappa_n} Y_i|^2
\\
&&
\hspace*{2cm}
- \left(
2 \cdot \frac{1}{n} \sum_{i=1}^n
|m_n(X_i)-Y_i|^2
-
2 \cdot \frac{1}{n} \sum_{i=1}^n
|m(X_i)- Y_i|^2
\right)
\Big] \cdot 1_{A_n}
\\
&&
\quad
+
\Big[
2 \cdot \frac{1}{n} \sum_{i=1}^n
|m_n(X_i)-Y_i|^2
-
2 \cdot \frac{1}{n} \sum_{i=1}^n
|m(X_i)- Y_i|^2
\Big] \cdot 1_{A_n}
\\
&&
\quad
+
\int | m_n(x)-m(x)|^2 \PROB_X (dx)
\cdot 1_{A_n^c}
\\
&&
=: \sum_{j=1}^5 T_{j,n}.
\end{eqnarray*}
In the first step of the proof we show
\[
\EXP \{T_{j,n}\} \leq \const \cdot \frac{\log n}{n} \quad
\mbox{for } j \in \{1,3\}.
\]
This follows as in the proof of Lemma 1 in Bauer and Kohler (2019).

\noindent
In the second step of the proof we show
\[
\EXP \{T_{5,n}\} \leq \const \cdot \frac{(\log n)^2}{n^2}.
\]
Since we assume $\|m\|_\infty\leq \kappa_n$, we have
\begin{align*}
    \int |m_n(x)-m(x)|^2 \PROB_X (dx)
    \leq \int |2\cdot \kappa_n|^2 \PROB_X (dx)
    \leq
4 \cdot \const[c2th1]^2 \cdot (\log
n)^2
\end{align*}
and it suffices to show
\begin{equation}
\label{pth1eq2}
\PROB(A_n^c) \leq \frac{\const}{n^2}.
\end{equation}
We consider the initial
choice of the weights for the $K_n$ fully connected neural networks sequentially. The weight in the first of these networks differs in all components
in layers $l=1, \dots, L-1$
by at most $\epsilon_n$ from $\left(\bw^*\right)^{(l)}_{1,i,j}$ with probability bounded from below by
\begin{eqnarray*}
&&
  \left( \frac{\epsilon_n}{2\cdot \const[c1]}
\right)^{r \cdot (r+1) \cdot (L-1)}
\cdot
\left(
\frac{\epsilon_n}{2\cdot \const[c2] \cdot (\log n) \cdot n^\tau}
\right)^{r \cdot (d+1)}
\\
&&
\geq
  \left(\frac{\const[c1pth1]}{2 \cdot \const[c1] \cdot n^{(2p+2d)\cdot \tau+1.5}}
\right)^{r \cdot (r+1) \cdot (L-1)}
\cdot
\left(
\frac{\const[c1pth1]}{2 \cdot \const[c2] \cdot (\log n) \cdot n^\tau \cdot n^{(2p+2d)\cdot \tau+1.5}}
\right)^{r \cdot (d+1)}
\\
&&
\geq
n^{-((2p+2d)\cdot \tau+1.5) \cdot r \cdot (r+1) \cdot (L-1) - ((2p+2d)\cdot \tau+1.5)\cdot r \cdot (d+1) -  \tau\cdot r \cdot (d+1) - 0.5}
\\
&&
=n^{-\eta -0.5},
\end{eqnarray*}
where
\[
\eta
=
((2p+2d)\cdot \tau+1.5) \cdot r \cdot (r+1) \cdot (L-1) + ((2p+2d)\cdot \tau+1.5)\cdot r \cdot (d+1) + \tau\cdot r \cdot (d+1). 
\]
Therefore, the probability that none of the first $n^{\eta + 1}$
neural networks satisfy this condition is bounded  from above by
\begin{align*}
&\left(1-n^{-\eta -0.5}\right)^{n^{\eta +1}}
\leq
\left(\exp\left(-n^{-\eta - 0.5}\right)\right)^{n^{\eta +1}}
=
\exp\left(-n^{0.5}\right).
\end{align*}
Assumption (\ref{th1eq1}) implies that $K_n\geq n^{\eta +1}\cdot N_n\cdot \tilde{K}_n$ for all sufficiently large $n$. Consequently, we can conclude, that the probability that there exists $s\in\{1,\dots,N_n\cdot\tilde{K}_n\}$ such
that all $K_n$ weight vectors differ from
$((\bw^{*})_{s,k,i}^{(l)})_{k,i,l}$
in at least one component by more than $\epsilon_n$ is  bounded from above by

\begin{eqnarray*}
N_n \cdot \tilde{K}_n \cdot \exp( -  n^{0.5})
&&
\leq  \const \cdot n^{(4p+3d)\cdot \tau+1}\cdot n^{d\cdot\tau} \cdot \exp( - n^{0.5})
\leq \frac{\const[c3pth1]}{n^2}.
\end{eqnarray*}
We conclude with Markov's inequality, (A1) and $\const[c2th1]\cdot \const[c1th1]\geq 3$
\begin{eqnarray*}
\PROB(A_n^c)
&\leq&
\frac{\const[c3pth1]}{n^2}
+
\PROB\{ \max_{i=1, \dots, n} Y_i^2 > \kappa_n
\}
 \leq
\frac{\const[c3pth1]}{n^2}
+
n \cdot\PROB\{  Y^2 > \kappa_n
\}
\\
&=&
\frac{\const[c3pth1]}{n^2}
+
n\cdot \PROB\{\exp(\const[c1th1]\cdot Y^2)>\exp(\const[c1th1]\cdot\kappa_n)\}
 \leq 
\frac{\const[c3pth1]}{n^2}
+
n \cdot
\frac{\EXP\{ \exp(\const[c1th1] \cdot Y^2)\}}{\exp( \const[c1th1] \cdot \kappa_n)}\\
 &=& 
\frac{\const[c3pth1]}{n^2}
+
n \cdot
\frac{\EXP\{ \exp(\const[c1th1] \cdot Y^2)\}}{ \exp( \const[c2th1]\cdot \const[c1th1] \cdot\log(n))}
= \frac{\const[c3pth1]}{n^2}
+
n\cdot\frac{\EXP\{ \exp(\const[c1th1] \cdot Y^2)\}}{n^{\const[c2th1]\cdot \const[c1th1]}}
\leq
\frac{\const}{n^2},
\end{eqnarray*}
for $n$ sufficiently large.

Let $\epsilon >0$ be arbitrary.
The third step is to show
\[
\EXP \{T_{2,n}\} \leq
\const[cT2pth1] \cdot
\frac{ n^{\tau \cdot d^* + \epsilon}}{n}
=\const[cT2pth1]\cdot n^{-\frac{2p}{2p+d^*}+\epsilon}
.
\]
Let $\W_n$ be the set of all weight vectors
$\bw=(w_{i,j,k}^{(l)})_{i,j,k,l}\in \R^{J_n}$ which satisfy
\[
| w_{1,1,k}^{(L)}| \leq \const[c64] \quad (k=1, \dots, K_n),
\]
\[
|w_{i,j,k}^{(l)}| \leq \const[c65] \quad (l=1, \dots, L-1)
\]
and
\[
|w_{i,j,k}^{(0)}| \leq \const[c66] \cdot (\log n) \cdot n^\tau.
\]
By Lemma \ref{le3}, Lemma \ref{le3Ko24} and Lemma \ref{le5Ko24} we can conclude 
\begin{equation*}
\| \bw^{(t)}-\bw^{(0)}\| \leq \const \quad (t=1,
\dots, t_n)
\end{equation*}
on $A_n$. This is done similarly as in the proof of Theorem 2 and uses
\[
t_n\cdot\lambda_n\cdot\kappa_n \leq \const.
\]
The choice of $\bw^{(0)}$ implies for
$\const[c64],\const[c65],\const[c66]$ sufficiently large that
we have on $A_n$
\[
\bw^{(t)} \in \W_n \quad (t=0, \dots, t_n).
\]
This means that 
\[
m_n\in\F_n = \left\{ T_{\kappa_n} f_\bw \quad : \quad \bw \in \W_n \right\}
\]
and we conclude with Lemma \ref{lea3} for $u_n>0$

\begin{align*}
&\EXP\{T_{2,n}\}\leq
4\cdot\kappa_n^2\cdot \frac{n}{(\log n)^2}\cdot \beta_{\lceil (\log
  n)^2\rceil}
((X_1,Y_1), (X_2,Y_2), \dots)+u_n
\\
&\quad\quad
+\int_{u_n}^\infty 
14\cdot \sup_{x_1^n\in \supp(X)^n}\mathcal N_1\left(\frac{u}{80\cdot \kappa_n},\F_n,x_1^n\right)\cdot\exp\left(-\const\cdot\frac{u\cdot n}{\kappa_n^2\cdot (\log n)^2}\right) \, du.
\end{align*}

Lemma \ref{le2ko24} (in case $d^*<d$) and Remark 3 (in case $d^*=d$) imply
for $x_1^n\in\M^n$
\begin{eqnarray*}
&&
\Nu_1 \left(
\frac{u}{80\cdot \kappa_n} , \F_n
, x_1^n
\right)
\leq
\left(
\frac{\const}{u/(80\cdot \kappa_n)}
\right)^{\const
\cdot  (\const)^{(L-1) \cdot d^*}
\cdot (\log n)^{d^*}  n^{\tau \cdot d^*} 
\cdot
   \left(\frac{K_n \cdot \const}{u/(80\cdot \kappa_n)}\right)^{d^*/k} + \const
  }
\end{eqnarray*}
and by choosing $k$ large enough we get for $u >\frac{\const}{n}$
\[
\Nu_1 \left(
\frac{u}{80\cdot \kappa_n} , \F_n
, x_1^n
\right)
\leq
\const[c4pth1]\cdot n^{\const[c5pth1]\cdot n^{\tau\cdot d^*+\epsilon/2}}.
\]
We have by (A6)
\[
\beta_{\lceil (\log n)^2\rceil}
((X_1,Y_1), (X_2,Y_2), \dots)
\leq \const[71]\cdot e^{-\const[72]\cdot \lceil (\log n)^2\rceil}\leq \const[71]\cdot e^{-\const[72]\cdot (\log n)^2} 
\]

For $u_n\geq \const/n$ we conclude 
\begin{align*}
\EXP\{T_{2,n}\}&\leq
4\cdot\const[71]\cdot\kappa_n^2\cdot \frac{n}{(\log n)^2}\cdot e^{-\const[72]\cdot (\log n)^2} +u_n
\\
&\quad\quad
+\int_{u_n}^\infty 
14\cdot\const[c4pth1]\cdot n^{\const[c5pth1]\cdot n^{\tau\cdot d^*+\epsilon/2}}
\cdot\exp\left(-\const[73]\cdot\frac{u\cdot n}{\kappa_n^2\cdot (\log n)^2}\right) \, du
\\
&\leq
4\cdot\const[71]\cdot\kappa_n^2\cdot \frac{n}{(\log n)^2}\cdot e^{-\const[72]\cdot (\log n)^2} +u_n
\\
&\quad\quad +
\const[74]\cdot n^{\const[c5pth1]\cdot n^{\tau\cdot d^*+\epsilon/2}}
\cdot\exp\left(-\const[73]\cdot\frac{u_n\cdot n}{\kappa_n^2\cdot (\log n)^2}\right)
\cdot \frac{\kappa_n^2\cdot (\log n)^2}{n}
\end{align*}
The result of the third step of the proof follows by setting
\[
u_n = \frac{\kappa_n^2 \cdot (\log n)^2}{\const[73]\cdot n}
\cdot
\const[c5pth1]
\cdot
 n^{\tau \cdot d^* +
    \epsilon/2}
\cdot \log n.
\]
In the last step of the proof we will  show 
\begin{eqnarray*}
&&
  \EXP \{ T_{4,n} \} 
  \leq \const \cdot n^{- \frac{2p}{2p+d^*}}.
\end{eqnarray*}
Using
\[
|T_{\kappa_n} z - y| \leq |z-y|
\quad \mbox{for } |y| \leq \kappa_n
\]
we get
\begin{eqnarray*}
 \frac{T_{4,n}}{2}
&&=
\Big[ \frac{1}{n} \sum_{i=1}^n
|m_n(X_i)-Y_i|^2
-
 \frac{1}{n} \sum_{i=1}^n
|m(X_i)- Y_i|^2
\Big] \cdot 1_{A_n}
\\
&&
=
\Big[ \frac{1}{n} \sum_{i=1}^n
|T_{\kappa_n}f_{\bw^{(t_n)}}(X_i)-Y_i|^2
-
 \frac{1}{n} \sum_{i=1}^n
|m(X_i)- Y_i|^2
\Big] \cdot 1_{A_n}
\\
&&
\leq
\Big[
\frac{1}{n} \sum_{i=1}^n
|f_{\bw^{(t_n)}}(X_i)-Y_i|^2
-
 \frac{1}{n} \sum_{i=1}^n
|m(X_i)- Y_i|^2
\Big] \cdot 1_{A_n}
\\
&&
=
\Big[ F_n(\bw^{(t_n)})
-
 \frac{1}{n} \sum_{i=1}^n
|m(X_i)- Y_i|^2
\Big] \cdot 1_{A_n}.
\end{eqnarray*}
On $A_n$ let $\tilde{\bw}\in \R^{J_n}$ be an extension of $\bw^*\in \R^{J^*_n}$ onto $\R^{J_n}$, where we fill each new component with the value of $\bw^{(0)}$, i.e.
\begin{align*}
    (\tilde{\bw})_{j_s,k,i}^{(l)}&= (\bw^*)_{s,k,i}^{(l)}\mbox{ for }s\in\{1,\dots,N_n\cdot\tilde{K}_n\}\mbox{ and}\\
    (\tilde{\bw})_{j,k,i}^{(l)}&=({\bw^{(0)}})_{j,k,i}^{(l)}\mbox{ for }j\notin\{j_1,\dots,j_{N_n\cdot\tilde{K}_n}\}.
\end{align*}
The networks defined by $\bw^*$ and $\tilde{\bw}$ are identical, since the weights in the outer Layer of $\bw^{(0)}$ are all $0$.
On $A_n$ we have
\begin{eqnarray*}
\| \tilde{\bw} - \bw^{(0)} \|^2
&\leq&
\sum_{k=1}^{N_n\cdot\tilde{K}_n} |(\tilde{\bw})_{1,1,j_k}^{(L)}|^2 + N_n \cdot \tilde{K}_n
   \cdot L \cdot (r \cdot
   (r+d)) \cdot \epsilon_n^2
\\
&\leq&
N_n \cdot \tilde{K}_n \cdot \left(
\frac{\const[c6pth1] \cdot \tilde{K}_n^{(q+d)/d}}{N_n}
\right)^2
+
\const \cdot N_n \cdot \tilde{K}_n
   \cdot \epsilon_n^2
\\
&\leq&
\const\cdot \frac{\tilde{K}_n^{2\cdot (q+d)/d+1}}{N_n}
+\const[c7pth1]\cdot\frac{1}{n^2} 
\\
&\leq&
\const\cdot n^{-1-\frac{2p}{2p+d^*}}+\const[c7pth1]\cdot \frac{1}{n^2}
   \leq \const\cdot n^{-1-\frac{2p}{2p+d^*}}. 
\end{eqnarray*}
Here we used 
\begin{equation*}
\left(
	2\cdot \frac{q+d}{d}+1
\right)
\cdot \frac{d}{2p+d^*}
-
1
-
\frac{4p+3d}{2p+d^*}
=
-
1
-
\frac{4p-2q}{2p+d^*}
\leq
-1-\frac{2p}{2p+d^*}
.
\end{equation*}
We get with Theorem \ref{th2}
\begin{eqnarray*}
  &&
  \frac{T_{4,n}}{2}
  \\
  &&
 \leq
 \Bigg(
\frac{1}{n} \sum_{i=1}^n
|f_{\tilde{\bw}}(X_i)-Y_i|^2
+ \const \cdot (\log n)^2 \cdot \|\tilde{\bw}-\bw^{(0)}\|^2
+ \const \cdot \frac{(\log n)^{3}}{K_n^{3/2}}
\\
&&
\quad \quad
-
 \frac{1}{n} \sum_{i=1}^n
|m(X_i)- Y_i|^2
 \Bigg)
 \cdot 1_{A_n}
\\
&&
 \leq
\frac{1}{n} \sum_{i=1}^n
|f_{\bw^*}(X_i)-Y_i|^2
- \frac{1}{n} \sum_{i=1}^n
|m(X_i)- Y_i|^2
+ \const \cdot  (\log n)^2\cdot n^{-1-\frac{2p}{2p+d^*}}
\\
&&
\quad \quad
 +
 \frac{1}{n} \sum_{i=1}^n
|m(X_i)- Y_i|^2
 \cdot 1_{A_n^c}. 
\end{eqnarray*}
Using (\ref{pth1eq2}) and the C-S inequality, we can conclude
\begin{eqnarray*}
              &&
              \EXP\left\{\frac{1}{n} \sum_{i=1}^n
|f_{\bw^*}(X_i)-Y_i|^2
- \frac{1}{n} \sum_{i=1}^n
|m(X_i)- Y_i|^2
 +
 \frac{1}{n} \sum_{i=1}^n
|m(X_i)- Y_i|^2
 \cdot 1_{A_n^c}
\right\}
              \\
              &&
              \leq 
\EXP\{|f_{\bw^*}(X)-Y|^2\}- \EXP\left\{
|m(X)- Y|^2\right\}
+\frac{1}{n}\sum_{i=1}^n\sqrt{\EXP\{|m(X_i)-Y_i|^4\}}\cdot\sqrt{\PROB\{A_n^c\}}
 \\
                &&
                \leq
                 \int |f_{\bw*}(x)-m(x)|^2 \PROB_X(dx)
+\const\cdot \frac{(\log n)^2}{n}.
              \end{eqnarray*}
Here we have used that $\|m\|_\infty\leq \kappa_n=c_3\cdot \log n$ and that all moments of $Y$ are bounded
               due to $\EXP\{\exp(c_4\cdot Y^2)\}<\infty$.
Inequality (\ref{pth1eq1}) implies
\begin{align*}
    \EXP\{T_{4,n}\}
    &\leq
    \const\cdot\frac{1}{\tilde{K}_n^{2\cdot\frac{p}{d}}}+ \const \cdot  \frac{(\log n)^2}{n}
    \leq \const\cdot n^{-\frac{2p}{2p+d^*}}.
\end{align*}
Combing all steps, we get for all $\epsilon >0$
\begin{align*}
    \EXP\int |m_n(x)-m(x)|^2 \PROB_X(dx)\leq \const \cdot n^{-\frac{2p}{2p+d^*}+\epsilon}.
\end{align*}
\hfill $\Box$

\begin{appendix}
\section{Proof of Lemma \ref{lea2}}\label{AppendixSe1}
For the proof of Lemma \ref{lea2} we need the following lemma.

\begin{lemma}
\label{lea1}
Let
$N,d,K \in \N$, let
$X: \Omega \rightarrow \R^N$,
$Y: \Omega \rightarrow \R^d$,
$Z: \Omega \rightarrow \R^K$
and
$U: \Omega \rightarrow \R$
be Borel measurable random variables defined on a probability
space
$(\Omega,\A,\PROB)$ such that $(X,Y)$ and $U$ are independent and
$U$ is uniformly distributed on $[0,1]$.

Then there exists a probability space
$(\bar{\Omega},\bar{\A}, \bar{\PROB})$,
Borel measurable random variables
$\bar{X}: \bar{\Omega} \rightarrow \R^N$,
$\bar{ Y}, \bar{Y^*}: \bar{\Omega} \rightarrow \R^d$,
$\bar{Z}: \bar{\Omega} \rightarrow \R^K$,
$\bar{ U}: \bar{\Omega} \rightarrow \R$
and a Borel measurable function
$f:\R^N \times \R^d \times \R \rightarrow \R^d$
such that
\begin{equation}
\label{lea1eq1}
\PROB_{(X,Y,Z,U)}
=
\bar{\PROB}_{
(\bar{X},\bar{Y},\bar{Z},\bar{U})
},
\end{equation}
\begin{equation}
\label{lea1eq2}
\PROB_Y = \bar{\PROB}_{\bar{Y}^*},
\end{equation}
\begin{equation}
\label{lea1eq3}
\bar{Y}^* = f( \bar{X},\bar{Y},  \bar{U})  \quad a.s.
\end{equation}
\begin{equation}
\label{lea1eq4}
\bar{X}, \bar{Y}^* \mbox{ independent}
\end{equation}
and
\begin{eqnarray}
\label{lea1eq5}
\bar{\PROB}
\{ \bar{Y} \neq \bar{Y}^* \}
\leq
\EXP
\left\{
\esssup_{A \in \B_d}
\left|
\PROB\{  Y \in A | X \}
-
\PROB\{ Y \in A \}
\right|
\right\}.
\end{eqnarray}
\end{lemma}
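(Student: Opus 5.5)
We prove Lemma \ref{lea1} by a maximal coupling of the conditional law of $Y$ given $X$ with the marginal law $\PROB_Y$, carried out pointwise in $x$ and randomized by the independent uniform variable $U$. We take $(\bar\Omega,\bar\A,\bar\PROB)$ to be $(\Omega,\A,\PROB)$ itself, possibly enlarged by a product with $([0,1],\B,\lambda)$ if we need a second uniform variable. We set $\bar X=X$, $\bar Y=Y$, $\bar Z=Z$ and $\bar U=U$, so (\ref{lea1eq1}) is automatic. It then remains to construct a measurable $f$ with the required properties.

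\textbf{Measurable kernels.} Since $\R^d$ is Polish, there is a regular conditional distribution $\mu_x(\cdot)=\PROB\{Y\in\cdot\,|X=x\}$, a Borel kernel. Let $\nu=\PROB_Y$. For each $x$ we decompose $\mu_x$ and $\nu$ with respect to $\rho_x=\mu_x+\nu$, with densities $g_x=d\mu_x/d\rho_x$ and $h=d\nu/d\rho_x$. These can be chosen jointly measurable in $(x,y)$, for instance via a countable generating algebra and the martingale convergence theorem (Doob's measurable Radon--Nikodym derivative).

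Define $\delta(x)=\|\mu_x-\nu\|_{TV}=\int (g_x-h)_+\,d\rho_x$. This equals $\sup_{A\in\B_d}|\mu_x(A)-\nu(A)|$, and the supremum may be restricted to a countable generating algebra $\C$. Hence $\EXP\{\delta(X)\}$ equals the right-hand side of (\ref{lea1eq5}): the esssup over $A$ of $|\PROB\{Y\in A|X\}-\PROB\{Y\in A\}|$ coincides a.s. with $\delta(X)$.

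\textbf{Coupling construction.} Given $X=x$ and $Y=y$, we keep $Y^*=y$ with probability $p_x(y)=\min\{1,h(y)/g_x(y)\}$. Otherwise $Y^*$ is drawn from the residual law $\nu_x^{res}=(h-g_x)_+\rho_x/\delta(x)$. The $U$ is split into two independent uniforms by a measurable bijection $[0,1]\to[0,1]^2$, such as interleaving binary digits. The first uniform performs the acceptance test. The second is used to sample from $\nu_x^{res}$ via a measurable quantile-type map $\R^d$-valued, for example through a Borel isomorphism $\R^d\cong\R$ followed by the inverse CDF. This map is jointly measurable in $(x,u)$ because the CDF of the transported residual law is measurable in $x$. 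Together these steps define $f(x,y,u)$, and (\ref{lea1eq3}) holds by construction.

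\textbf{Verification.} Conditional on $X=x$, the law of $Y^*$ is
$$\min(g_x,h)\rho_x+\Big(\int (g_x-h)_+d\rho_x\Big)\frac{(h-g_x)_+\rho_x}{\delta(x)}=\nu,$$
using $\int(g_x-h)_+=\int(h-g_x)_+$. Since this conditional law does not depend on $x$, we get (\ref{lea1eq2}) and (\ref{lea1eq4}). Moreover, $\PROB\{Y\neq Y^*|X=x\}\le \int g_x(1-p_x)\,d\rho_x=\delta(x)$. Taking expectations gives (\ref{lea1eq5}).

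The case $\delta(x)=0$ is handled by always accepting. The main obstacle is the measurability bookkeeping, namely the joint measurability of $g_x$ and of the sampler in $(x,y,u)$, and the identification of the esssup with the total variation distance. We handle both by working with the countable generating algebra $\C$ and the Borel isomorphism reduction to $\R$.
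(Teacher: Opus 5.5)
Your proof is correct, but it takes a genuinely different route from the paper.

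\textbf{The paper's route.} The paper never couples the continuous conditional law directly. It discretizes $Y$ into the cubes $C_{i,n}$ and performs, on these finitely many atoms, the discrete analogue of your accept/residual scheme; there measurability of $f_n$ and all computations are trivial. This gives $Y_n^*$ independent of $X$ with $\PROB\{Y_n\neq Y_n^*\}\le\EXP\{\sup_{C\in\C_n}|\dots|\}$. It then passes to the limit via tightness, Prokhorov's theorem and a modified Skorokhod representation theorem (Lemma \ref{lea4}), defining $f$ as a $\limsup$ of the $f_{n_k}$. This is why a new probability space $(\bar\Omega,\bar\A,\bar\PROB)$ appears in the statement.

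\textbf{Your route.} You take the limit at the level of densities instead: a jointly measurable version of $g_x=d\mu_x/d\rho_x$, itself obtainable from nested partitions and martingale convergence. You then build the maximal coupling exactly, with a measurable sampler for the residual kernel.

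\textbf{What each buys.} Your argument is shorter and avoids the weak-convergence machinery. It keeps the original space with $\bar X=X$, $\bar Y=Y$, $\bar Z=Z$, $\bar U=U$. It also identifies the right-hand side of (\ref{lea1eq5}) exactly as $\EXP\{\delta(X)\}$ with $\delta(X)=\sup_{C\in\C}|\PROB\{Y\in C|X\}-\PROB\{Y\in C\}|$ for a countable generating algebra $\C$, which gives Remark 4 as well. The price is reliance on two standard but non-elementary facts, which you invoke rather than prove: Doob's measurable Radon--Nikodym theorem for kernels, and the randomization lemma for kernels into Borel spaces. The paper's coupling step is elementary, but it pays for this in the limiting argument.

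\textbf{Minor points.}
\begin{itemize}
\item $h$ depends on $x$, so it should be written $h_x$.
\item Digit interleaving is a bijection only off a null set, which suffices here.
\item Do not use the optional enlargement by an extra uniform variable: $\bar Y^*$ would then no longer be a function of $(\bar X,\bar Y,\bar U)$ as (\ref{lea1eq3}) requires. Your splitting of $U$ already makes the enlargement unnecessary.
\end{itemize}
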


\noindent
{\bf Remark 4.} In the proof we will show
\[
\bar{\PROB}
\{ \bar{Y} \neq \bar{Y}^* \}
\leq
\EXP
\left\{
\sup_{A \in \C}
\left|
\PROB\{  Y \in A | X \}
-
\PROB\{ Y \in A \}
\right|
\right\}
\]
for some countable subset $\C$ of $\B_d$.

\noindent
{\bf Proof.}
The proof is based on the proof sketch of Theorem 1 in Doukhan (1994),
Section 1.1.

Partition
$[-2^n,2^n]^d$
into $N_n-1=4^{n \cdot d}$ many cubes
$C_{1,n}$, \dots, $C_{N_n-1,n}$
 of side length
\[
\frac{2 \cdot 2^n}{4^n}
=
2
\cdot \left( \frac{1}{2} \right)^n,
\]
set
\[
C_{N_n,n} = \R^d \setminus [-2^n,2^n]^d,
\]
let
$y_{C_{1,n}}$, \dots, $y_{C_{N_n-1,n}}$ be the centers of
$C_{1,n}$, \dots, $C_{N_n-1,n}$, and set
\[
y_{C_{N_n,n}} = (2^n+1, \dots, 2^n+1)^T.
\]
Set
\[
Y_n(\omega) = y_{C_{i,n}} \quad \mbox{if } Y(\omega) \in C_{i,n}
\]
$(i=1, \dots, N_n)$.

Then
\begin{equation}
\label{plea1eq1}
Y_n \rightarrow^\PROB Y,
\end{equation}
since for any $\epsilon>0$ we have
\[
\limsup_{n \rightarrow \infty}
\PROB\{
\|Y_n-Y\|_\infty > \epsilon
\}
\leq
\limsup_{n \rightarrow \infty}
\PROB\{ Y \in C_{N_n,n} \}=0.
\]
For $x \in \R^d$ we define
\[
\lambda_i=\lambda_{i,n}(x)
=
\PROB\{ Y \in C_{i,n} | X=x\}
\quad \mbox{and} \quad
\mu_i=\mu_{i,n}=\PROB\{Y \in C_{i,n} \}
\]
for $i=1, \dots, N_n$, and reorder
$C_{1,n}$, \dots, $C_{N_n,n}$ (depending on $x$)
such that 
\[
\lambda_1 \leq \mu_1, \dots, \lambda_k \leq \mu_k,
\lambda_{k+1} > \mu_{k+1}, \dots, \lambda_{N_n} > \mu_{N_n}
\]
holds for some $k=k(x) \in \{1, \dots, N_n\}$. This means
\[
\PROB\{ Y \in C_{i,n} | X=x\} \leq \PROB\{ Y \in C_{i,n} \}
\quad \mbox{for } i=1, \dots, k
\]
and
\[
\PROB\{ Y \in C_{i,n} | X=x\} > \PROB\{ Y \in C_{i,n} \}
\quad \mbox{for } i=k+1, \dots, N_n.
\]
Given $X=x$ and $Y_n=y_{C_{i,n}}$ (the latter is equivalent to $Y \in
C_{i,n}$),
we choose the value of $Y_n^*$ as follows: In case
$\PROB\{Y \in C_{i,n}\} \geq \PROB\{Y \in C_{i,n}|X=x\}$
we set $Y_n^*=y_{C_{i,n}}$. And in case
$\PROB\{Y \in C_{i,n}\} < \PROB\{Y \in C_{i,n}|X=x\}$ we choose the
value of $Y_n^*$ randomly as follows: With probability
\[
\frac{(\mu_j-\lambda_j)\cdot (\lambda_i-\mu_i)}{a \cdot \lambda_i}
\]
we set it equal to $y_{C_{j,n}}$ for $j=1, \dots, k$, and with
probability
\[
\frac{\mu_i}{\lambda_i}
\]
we set it equal to $y_{C_{i,n}}$, where
\[
a=a_{n,x}= \sum_{l=1}^k (\mu_l-\lambda_l) = \sum_{l=k+1}^{N_n} (\lambda_l-\mu_l).
\]
(Here the last equality holds since $\sum_{l=1}^{N_n} \mu_l
=1=\sum_{l=1}^{N_n} \lambda_l$.)
This is possible since
\[
\sum_{j=1}^k \frac{(\mu_j-\lambda_j)\cdot (\lambda_i-\mu_i)}{a \cdot \lambda_i}
+\frac{\mu_i}{\lambda_i}=1.
\]
It is easy to see that there exists a Borel measurable function
$f_n: \R^N \times \R^d \times \R \rightarrow \R$ such that
\begin{equation}
\label{plea1eq2}
Y_n^* = f_n(X,Y,U)
\end{equation}
holds, and that $Y_n$ and $Y_n^*$ satisfy
\begin{eqnarray*}
&& \PROB\{ Y_n=y_{C_{i,n}},Y_n^*=y_{C_{j,n}}|X=x\} \\
&& =
\begin{cases}
\PROB\{Y \in C_{i,n}|X=x\}, & i=j \mbox{ and } \PROB\{Y \in C_{i,n}\}
\geq \PROB\{Y \in C_{i,n}|X=x\}\\
0 &  i \neq j \mbox{ and } \PROB\{Y \in C_{i,n}\}
\geq \PROB\{Y \in C_{i,n}|X=x\} \\
\frac{(\mu_j-\lambda_j)\cdot (\lambda_i-\mu_i)}{a},
& j \leq k \mbox{ and } \PROB\{Y \in C_{i,n}\}
< \PROB\{Y \in C_{i,n}|X=x\} \\
\PROB\{ Y \in C_{i,n}\}, & i=j \mbox{ and } \PROB\{Y \in C_{i,n}\}
< \PROB\{Y \in C_{i,n}|X=x\} \\
0, & \mbox{else}.
\end{cases}
\end{eqnarray*}
Here the third and the fourth rows in the right-hand side above
do not contradict each other since $\PROB\{Y \in C_{i,n}\}
< \PROB\{Y \in C_{i,n}|X=x\} $ implies $i>k$.

Next we show
\begin{equation}
\label{plea1eq3}
\PROB\{Y_n^* = y_{C_{j,n}}|X=x\} = \PROB\{ Y_n=y_{C_{j,n}} \}
\quad
\mbox{for all } x, \mbox{ all } j=1, \dots, N_n.
\end{equation}
Fix $x \in \R^N$. If
\[
\PROB\{ Y \in C_{j,n} \} < \PROB\{ Y \in C_{j,n} | X=x \},
\]
then $j \geq k+1$ and
\begin{eqnarray*}
&&
\PROB\{Y_n^* = y_{C_{j,n}}|X=x\} = 
\sum_{i=1}^{N_n}
\PROB\{Y_n=y_{C_{i,n}}, Y_n^* = y_{C_{j,n}}|X=x\} \\
&&
=
\sum_{i=1}^{k}
\PROB\{Y_n=y_{C_{i,n}}, Y_n^* = y_{C_{j,n}}|X=x\} 
+
\PROB\{Y_n=y_{C_{j,n}}, Y_n^* = y_{C_{j,n}}|X=x\} 
\\
&&
\quad
+
\sum_{i=k+1, \dots, N_n, i \neq j}
\PROB\{Y_n=y_{C_{i,n}}, Y_n^* = y_{C_{j,n}}|X=x\} 
\\
&&
=
0 + \PROB\{Y \in C_{j,n} \} + 0 
=
\PROB\{ Y_n = y_{ C_{j,n}} \} 
.
\end{eqnarray*}

If
\[
\PROB\{ Y \in C_{j,n} \} \geq \PROB\{ Y \in C_{j,n} | X=x \},
\]
then $j \leq k$ and
\begin{eqnarray*}
&&
\PROB\{Y_n^* = y_{C_{j,n}}|X=x\} = 
\sum_{i=1}^{N_n}
\PROB\{Y_n=y_{C_{i,n}}, Y_n^* = y_{C_{j,n}}|X=x\} \\
&&
=
\PROB\{Y_n=y_{C_{j,n}}, Y_n^* = y_{C_{j,n}}|X=x\} 
+
\sum_{i=1, \dots, k, i \neq j}
\PROB\{Y_n=y_{C_{i,n}}, Y_n^* = y_{C_{j,n}}|X=x\} 
\\
&&
\quad
+
\sum_{i=k+1}^{N_n}
\PROB\{Y_n=y_{C_{i,n}}, Y_n^* = y_{C_{j,n}}|X=x\} 
\\
&&
=
\PROB\{Y_n \in {C_{j,n}} | X=x \}
+ 0 +
\sum_{i=k+1}^{N_n}
\frac{(\mu_j-\lambda_j)\cdot (\lambda_i-\mu_i)}{a}
\\
&&
=
\lambda_j + (\mu_j-\lambda_j) \cdot \frac{1}{a} \cdot \sum_{i=k+1}^{N_n}
   (\lambda_i-\mu_i)\\
&&
=
\lambda_j + (\mu_j-\lambda_j) \cdot \frac{1}{a} \cdot a = \mu_j = \PROB\{Y_n=y_{C_{j,n}}\}.
\end{eqnarray*}
This proves (\ref{plea1eq3}).

From (\ref{plea1eq3}) we conclude
\begin{eqnarray*}
\PROB\{ Y_n^* = y_{C_{j,n}} \}
&=&
\int \PROB\{ Y_n^* = y_{C_{j,n}} | X=x \} \PROB_X (dx)
=
\int \PROB\{ Y_n = y_{C_{j,n}} \} \PROB_X (dx)
\\
&=& \PROB\{ Y_n = y_{C_{j,n}} \} 
\end{eqnarray*}
for $j=1, \dots, N_n$, hence
\begin{equation}
\label{plea1eq4}
\PROB_{Y_n^*} = \PROB_{Y_n}.
\end{equation}
Furthermore, for $j \in \{1, \dots, N_n\}$ and $A \in \B_N$
we can conclude from (\ref{plea1eq3}) and (\ref{plea1eq4})
\begin{eqnarray*}
\PROB\{ Y_n^* = y_{C_{j,n}}, X \in A \}
&=&
\int_A \PROB\{ Y_n^* = y_{C_{j,n}} | X=x \} \PROB_X (dx)
\\
&=&
\int_A \PROB\{ Y_n = y_{C_{j,n}} \} \PROB_X (dx)
\\
&=&
 \PROB\{ Y_n = y_{C_{j,n}} \}  \cdot \PROB\{ X \in A \}
\\
&=&
\PROB\{ Y_n^* = y_{C_{j,n}} \}  \cdot \PROB\{ X \in A \},
\end{eqnarray*}
hence
\begin{equation}
\label{plea1eq5}
Y_n^* \mbox{ and } X \mbox{ are independent}.
\end{equation}

Set
\[
\C_n = \left\{
\cup_{j \in J} C_{j,n} \, : \, J \subseteq \{1, \dots, N_n\}
\right\}
\quad \mbox{and} \quad
\C = \cup_{n \in \N} \, \C_n.
\]
By the definition of $Y_n^*$ we get
\begin{eqnarray*}
&&
\PROB \{ Y_n \neq Y_n^* | X=x \}
\\
&&
=
\sum_{1 \leq i,j \leq N_n, i \neq j}
\PROB \{ Y_n = y_{C_{i,n}},  Y_n^*= y_{C_{j,n}} | X=x \}
\\
&&
= \sum_{j=1}^k \sum_{i=k+1}^{N_n}
\frac{(\mu_j-\lambda_j)\cdot (\lambda_i-\mu_i)}{a}
\\
&&
= \frac{1}{a} \cdot a \cdot a = a
\\
&&
=
\sum_{l=1}^k \left(
\PROB\{ Y \in C_{l,n}\} - \PROB\{ Y \in C_{l,n} | X=x \}
\right)
\\
&&
=
\PROB\{ Y \in \cup_{l=1}^k C_{l,n}\} - \PROB\{ Y \in \cup_{l=1}^k C_{l,n} | X=x \}
\\
&&
\leq
\sup_{C \in \C_n}
\left|
\PROB\{ Y \in C\} - \PROB\{ Y \in C | X=x \}
\right|
\end{eqnarray*}
where we have used
\[
\cup_{l=1}^k C_{l,n}  \in \C_n.
\]
(Here the sets in the union above depend on $x$ and not only on $l$
and $n$). Hence
\begin{eqnarray}
\label{plea1eq6}
\PROB \{ Y_n \neq Y_n^* \}
&=& \int \PROB \{ Y_n \neq Y_n^* | X=x \} \PROB_X(dx)
\nonumber \\
&\leq&
\int 
\sup_{C \in \C_n}
\left|
\PROB\{ Y \in C\} - \PROB\{ Y \in C | X=x \}
\right|
 \PROB_X(dx)
\nonumber \\
&=&
\EXP \left\{
\sup_{C \in \C_n}
\left|
\PROB\{ Y \in C\} - \PROB\{ Y \in C | X\}
\right|
\right\}
\nonumber \\
&\leq&
\EXP \left\{
\sup_{C \in \C}
\left|
\PROB\{ Y \in C\} - \PROB\{ Y \in C | X\}
\right|
\right\}.
\end{eqnarray}

For $M \in \N$ we have
\begin{eqnarray*}
&&
\sup_{n \in \N}
\PROB \left\{
(X,Y,Y_n,Y_n^*,Z,U)
\notin
[-M,M]^{
N + 3d + K + 1
}
\right\}
\\
&& \leq
\PROB\{ X \notin [-M,M]^N \}
+
\PROB\{ Y \notin [-M,M]^d \}
+
\sup_{n \in \N}
\PROB\{ Y_n \notin [-M,M]^d \}
\\
&&
\quad
+
\sup_{n \in \N}
\PROB\{ Y_n^* \notin [-M,M]^d \}
+
\PROB\{ Z \notin [-M,M]^K \}
+
\PROB\{ U \notin [-M,M] \}
\\
&&
\rightarrow 0 \quad (M \rightarrow \infty),
\end{eqnarray*}
since (\ref{plea1eq4}) and the definition of $Y_n$ imply
\begin{eqnarray*}
&&
\sup_{n \in \N}
\PROB\{ Y_n^* \notin [-M,M]^d \} = 
\sup_{n \in \N}
\PROB\{ Y_n \notin [-M,M]^d \}
\\
&&
\leq
\sup_{n \in \N: 2^n+1 \leq M}
\PROB\{ Y_n \notin [-M,M]^d \}
+\sup_{n \in \N: M<2^n+1}
\PROB\{ Y_n \notin [-M,M]^d \}
\\
&&
= 0 + \PROB\{ Y \notin [-M,M]^d \}
\rightarrow 0 \quad (M \rightarrow \infty). 
\end{eqnarray*}
From this we get that
\[
\left(
\PROB_{
(X,Y,Y_n,Y_n^*,Z,U)
}
\right)_{n \in \N}
\]
is tight, hence the theorem of Prokhorov implies that there exists
a subsequence $(n_k)_k$ of $(n)_n$ and a probability measure
\[
\PROB_{
(
\tilde{X},
\tilde{Y },
\tilde{\tilde{Y } },
\tilde{Y }^*,
\tilde{Z },
\tilde{ U}
)
}
\] 

such that
\begin{equation}
\label{plea1eq7}
\PROB_{
(X,Y,Y_{n_k},Y_{n_k}^*,Z,U)
}
\rightarrow
\PROB_{
(
\tilde{X},
\tilde{Y },
\tilde{\tilde{Y } },
\tilde{Y }^*,
\tilde{Z },
\tilde{ U}
)
}
\quad
\mbox{weakly}.
\end{equation}

By the Portmanteau theorem we can conclude from this
\begin{eqnarray*}
&&
\PROB\{  \tilde{Y } \neq
\tilde{\tilde{Y}} \}
\}
= \limsup_{\epsilon \rightarrow 0}
\PROB\{  |\tilde{Y} -
\tilde{\tilde{Y}}| > \epsilon \}
\leq
\limsup_{\epsilon \rightarrow 0}
\liminf_{k \rightarrow \infty}
\PROB\{  |Y - Y_{n_k} | > \epsilon \}
=0,
\end{eqnarray*}
where the last equality follows from (\ref{plea1eq2}). Hence we can
conclude from (\ref{plea1eq7})
\begin{equation}
\label{plea1eq8}
\PROB_{
(X,Y,Y_{n_k},Y_{n_k}^*,Z,U)
}
\rightarrow
\PROB_{
(
\tilde{X},
\tilde{Y },
\tilde{Y } ,
\tilde{Y }^*,
\tilde{Z },
\tilde{ U}
)
}
\quad
\mbox{weakly},
\end{equation}
By the continuous mapping theorem this implies
\[
\PROB_{
(X,Y,Z,U)
}
\rightarrow
\PROB_{
(
\tilde{X},
\tilde{Y },
\tilde{Z },
\tilde{ U}
)
}
\quad
\mbox{weakly}.
\]
and because of the uniqueness of the limit distribution for weak
convergence we get

\begin{equation}
\label{plea1eq9}
\PROB_{
(X,Y,Z,U)
}
=
\PROB_{
(
\tilde{X},
\tilde{Y },
\tilde{Z },
\tilde{ U}
)
}.
\end{equation}

By the proof of Skorokhod's representation theorem in Skorokhod (1978)
(cf., Lemma \ref{lea4} below)
we can conclude from (\ref{plea1eq8}) that there exists a probability
space
$(\bar{\Omega},\bar{\A}, \bar{\PROB})$
and random variables
$(\bar{X},\bar{Y},\bar{Y}_{n_k},\bar{Y}_{n_k}^*,\bar{Z},\bar{U})$
and
$(\bar{X},\bar{Y},\bar{Y},\bar{\bar{Y}}^*,\bar{Z},\bar{U})$
such that

\begin{equation}
\label{plea1eq10}
\PROB_{
(X,Y,Y_{n_k},Y_{n_k}^*,Z,U)
}
=
\bar{\PROB}_{
(\bar{ X}, \bar{ Y}, \bar{ Y}_{n_k}, \bar{ Y}_{n_k}^*, \bar{ Z}, \bar{U})
},
\end{equation}

\begin{equation}
\label{plea1eq11}
\PROB_{
(
\tilde{X},
\tilde{Y },
\tilde{Y } ,
\tilde{Y }^*,
\tilde{Z },
\tilde{ U}
)
}
=
\bar{\PROB}_{
(
\bar{X},
\bar{Y },
\bar{\bar{Y} } ,
\bar{Y }^*,
\bar{Z },
\bar{ U}
)
}
\end{equation}
and
\begin{equation}
\label{plea1eq12}
(\bar{ X}, \bar{ Y}, \bar{ Y}_{n_k}, \bar{ Y}_{n_k}^*, \bar{ Z},U)
\rightarrow
(\bar{X},
\bar{Y },
\bar{\bar{Y} },
\bar{Y }^*,
\bar{Z },
\bar{ U}
)
\quad a.s.
\end{equation}
From (\ref{plea1eq11}) we get
\[
\bar{\PROB}\{ 
\bar{Y } \neq
\bar{\bar{Y} }\}
=
\PROB\{ \tilde{Y} \neq \tilde{Y} \} =0,
\]
hence (\ref{plea1eq11}) and (\ref{plea1eq12}) imply 
\begin{equation}
\label{plea1eq13}
\PROB_{
(
\tilde{X},
\tilde{Y },
\tilde{Y } ,
\tilde{Y }^*,
\tilde{Z },
\tilde{ U}
)
}
=
\bar{\PROB}_{
(
\bar{X},
\bar{Y },
\bar{Y}  ,
\bar{Y }^*,
\bar{Z },
\bar{ U}
)
}
\end{equation}
and
\begin{equation}
\label{plea1eq14}
 (\bar{ X}, \bar{ Y}, \bar{ Y}_{n_k}, \bar{ Y}_{n_k}^*, \bar{ Z},U)
\rightarrow
(
\bar{X},
\bar{Y },
\bar{Y} ,
\bar{Y }^*,
\bar{Z },
\bar{ U}
)
\quad a.s.
\end{equation}
We show next that 
$
\bar{X},
\bar{Y },
\bar{Y }^*,
\bar{Z },
\bar{ U}
$
satisfy (\ref{lea1eq1})-(\ref{lea1eq5}).

Identity  (\ref{lea1eq1}) follows from 
  (\ref{plea1eq13}) and (\ref{plea1eq9}), because these two equations
imply
\[
\bar{\PROB}_{
(
\bar{X},
\bar{Y },
\bar{Z },
\bar{ U}
)
}
=
\PROB_{
(
\tilde{X},
\tilde{Y },
\tilde{Z },
\tilde{ U}
)
}
=
\PROB_{
(X,Y,Z,U)
}
.
\]

Identity (\ref{lea1eq2}) follows from 
\begin{equation}
\label{plea1eq15}
\bar{\PROB}_{\bar{Y}_{n_k}}\rightarrow \bar{\PROB}_{\bar{Y}}
\quad \mbox{weakly}
\end{equation}
(cf., (\ref{plea1eq14})),
\begin{equation}
\label{plea1eq16}
\bar{\PROB}_{\bar{Y}^*_{n_k}} \rightarrow \bar{\PROB}_{\bar{Y}^*}
\quad \mbox{weakly}
\end{equation}
(cf., (\ref{plea1eq14})) and
\begin{equation}
\label{plea1eq17}
\bar{\PROB}_{\bar{Y}_{n_k}^*} =
{\PROB}_{{Y}_{n_k}^*} =
{\PROB}_{{Y}_{n_k}} =
 \bar{\PROB}_{\bar{Y}_{n_k}}
\end{equation}
(cf., (\ref{plea1eq10}) and (\ref{plea1eq4})),
which imply
\begin{equation}
\label{plea1eq18}
\bar{\PROB}_{\bar{Y}_{n_k}}
\rightarrow
\bar{\PROB}_{\bar{Y}^*}
\quad \mbox{weakly}.
\end{equation}
Using the uniqueness of the limit distribution we get 
(\ref{lea1eq2}) from (\ref{plea1eq18}), (\ref{plea1eq15}) and
(\ref{lea1eq1}).

In order to show
(\ref{lea1eq3})
we observe that
\[
\bar{\PROB} \{ \bar{Y}_{n_k}^* = f_{n_k}(\bar{X},\bar{Y},\bar{U}) \}
=
{\PROB} \{ {Y}_{n_k}^* = f_{n_k}({X},{Y},{U}) \}
=1
\]
(cf., (\ref{plea1eq10}) and (\ref{plea1eq2})) and (\ref{plea1eq14})
imply that we have with probability one
\[
\bar{Y}^*
=
\lim_{k \rightarrow \infty} \bar{Y}_{n_k}^*
=
\lim_{k \rightarrow \infty} f_{n_k}(\bar{X},\bar{Y},\bar{U}).
\]
Set
\[
f(x,y,u) = \limsup_{k \rightarrow \infty} f_{n_k}(x,y,u)
\cdot 1_{\{ \limsup_{k \rightarrow \infty} f_{n_k}(x,y,u)<\infty\}}.
\]
Then $\bar{Y}^* < \infty$ $a.s.$ implies
\[
\bar{Y}^*
=
\lim_{k \rightarrow \infty} f_{n_k}(\bar{X},\bar{Y},\bar{U})
=
f (\bar{X},\bar{Y},\bar{U}),
\]
which proves (\ref{lea1eq3}).

Next we prove (\ref{lea1eq4}).
Let
$\varphi_{(\bar{X},\bar{Y}^*)}$, $\varphi_{\bar{X}}$ and $\varphi_{\bar{Y}^*}$
be the characteristic functions of
$(\bar{X},\bar{Y}^*)$, $\bar{X}$ and $\bar{Y}^*$, resp. Then
the continuity theorem of Levy-Cramer together with
(\ref{plea1eq13}), (\ref{plea1eq8}), (\ref{plea1eq5}) and (\ref{plea1eq10}) imply
\begin{eqnarray*}
&&
\varphi_{(\bar{X},\bar{Y}^*)}(u,v)
=
\varphi_{(\tilde{X},\tilde{Y}^*)}(u,v)
=
\lim_{k \rightarrow \infty}
\varphi_{(X,Y_{n_k}^*)}(u,v)
=
\lim_{k \rightarrow \infty}
\varphi_{X}(u) \cdot \varphi_{Y_{n_k}}(v)
\\
&&
=
\varphi_X(u) \cdot \varphi_{\tilde{Y}^*}(v)
=
\varphi_X(u) \cdot \varphi_{\bar{Y}^*}(v),
\end{eqnarray*}
hence  (\ref{lea1eq4}) holds.

We finish the proof by showing (\ref{lea1eq5}).
To do this we conclude from 
\[
\bar{\PROB}_{(\bar{Y}_{n_k},\bar{Y}^*_{n_k})}
\rightarrow \bar{\PROB}_{(\bar{Y},\bar{Y}^*)} \quad \mbox{weakly}
\]
(cf., (\ref{plea1eq14})), the Portmanteau theorem, (\ref{plea1eq10}) and
(\ref{plea1eq6})
\begin{eqnarray*}
\bar{\PROB} \{ \bar{Y} \neq \bar{Y}^* \}
&\leq&
\liminf_{k \rightarrow \infty}
\bar{\PROB} \{ \bar{Y}_{n_k} \neq \bar{Y}^*_{n_k} \}
\\
&=&
\liminf_{k \rightarrow \infty}
\PROB\{ Y_{n_k} \neq Y^*_{n_k} \}
\\
& \leq &
\EXP \left\{
\sup_{C \in \C}
\left|
\PROB\{ Y \in C\} - \PROB\{ Y \in C | X\}
\right|
\right\},
\end{eqnarray*}
hence (\ref{lea1eq5}) holds. \hfill $\Box$

In the proof above we have used the following modification
of the classical representation theorem of Skorokhod.

\begin{lemma}
\label{lea4}
Let $M,N \in \N$, let $(\Omega,\A,\PROB)$ be a probability
space, and let
$X: \bar\Omega \rightarrow \R^M$, $Y, Y_n: \bar\Omega \rightarrow \R^N$
be Borel measurable random variables with
\[
\PROB_{(X,Y_n)} \rightarrow \PROB_{(X,Y)} \quad \mbox{weakly}.
\]
Then there exists  a probability space
$(\bar \Omega,\bar \A,\bar \PROB)$ 
and Borel measurable random variables
$\bar X: \Omega \rightarrow \R^M$, $\bar Y, \bar Y_n: \Omega \rightarrow \R^N$
such that
\begin{equation}
\label{lea4eq1}
\PROB_{(X,Y_n)}=\bar \PROB_{(\bar X, \bar Y_n)}\quad (n \in \N),
\end{equation}
\begin{equation}
\label{lea4eq2}
\PROB_{(X,Y)}=\bar \PROB_{(\bar X, \bar Y)}
\end{equation}
and
\begin{equation}
\label{lea4eq3}
(\bar X, \bar Y_n) \rightarrow (\bar X, \bar Y) \quad a.s.
\end{equation}
\end{lemma}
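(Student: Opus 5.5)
I do not think the statement can be proved as written. Before any construction, I would check whether one common $\bar X$ is compatible with $\PROB_{(X,Y_n)}\to\PROB_{(X,Y)}$ weakly. It is not, in general, so my plan is to record a counterexample and then prove the weaker representation that is actually available. (The domains and codomains of $X,Y,Y_n$ and $\bar X,\bar Y,\bar Y_n$ in the statement are swapped between $\Omega$ and $\bar\Omega$; I read them the intended way.)

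\emph{Counterexample.} Let $M=N=1$, let $X$ be uniformly distributed on $[0,1]$, let $Y_n=\mbox{sign}(\sin(2\pi n X))$, and let $Y$ be a Rademacher variable independent of $X$. Then $\PROB_{(X,Y_n)}\to\PROB_{(X,Y)}$ weakly. To see this, test against products $g(x)h(y)$ with $g$ continuous: by Riemann--Lebesgue type averaging, $\EXP\{g(X)1_{\{Y_n=1\}}\}\to\frac12\int g$.

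Now suppose $\bar X,\bar Y_n,\bar Y$ satisfy (\ref{lea4eq1})--(\ref{lea4eq3}).
\begin{itemize}
\item Equation (\ref{lea4eq1}) forces $\bar Y_n=f_n(\bar X)$ a.s., with $f_n(x)=\mbox{sign}(\sin(2\pi nx))$. This is because $Y_n=f_n(X)$ and equality of joint laws preserves this functional relation.
\item Hence, by (\ref{lea4eq3}), $\bar Y=\lim_n f_n(\bar X)$ is a.s.\ $\sigma(\bar X)$-measurable.
\item On the other hand, (\ref{lea4eq2}) makes $\bar Y$ independent of $\bar X$. Together with measurability this forces $\bar Y$ to be a.s.\ constant, which contradicts its Rademacher law.
\end{itemize}
So the requirement that the first component be the \emph{same} $\bar X$ for all $n$ and for the limit cannot be met in general.

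\emph{What I would prove instead.} I would apply the classical Skorokhod representation theorem to the vectors $V_n=(X,Y_n)$ and $V=(X,Y)$ in $\R^{M+N}$. This gives $(\bar X_n,\bar Y_n)$ and $(\bar X,\bar Y)$ on a common space with the correct laws and $(\bar X_n,\bar Y_n)\to(\bar X,\bar Y)$ a.s. The construction is Skorokhod's:
\begin{itemize}
\item take nested partitions of $\R^{M+N}$ into rectangles whose boundaries are $\PROB_V$-null;
\item order the cells lexicographically and map $u\in(0,1)$ (with Lebesgue measure) to the cell whose subinterval, of length $\PROB_{V_n}$(cell), contains $u$;
\item use the fact that interval endpoints converge, because cell probabilities converge under weak convergence.
\end{itemize}

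That weaker version is what I would then feed into the proof of Lemma \ref{lea1}. The steps there that actually use the lemma are (\ref{plea1eq13})--(\ref{lea1eq5}), and they only need equality of laws and a.s.\ convergence of the joint vector $(\bar X_{n_k},\bar Y,\bar Y_{n_k},\bar Y^*_{n_k},\bar Z,\bar U)$, where the $X$-coordinate is also allowed to vary with $k$. In particular:
\begin{itemize}
\item (\ref{lea1eq3}) still follows, because $\bar Y^*_{n_k}=f_{n_k}(\bar X_{n_k},\bar Y_{n_k},\bar U_{n_k})$ a.s.;
\item for (\ref{lea1eq3}) I would replace the $\limsup$ construction of $f$ by a conditional-law argument, namely a measurable selection of a kernel realizing $\bar Y^*$ given $(\bar X,\bar Y,\bar U)$.
\end{itemize}

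\emph{Main obstacle.} The hard part is exactly this repair of (\ref{lea1eq3}): once $\bar X$ varies with $k$, a limit of $f_{n_k}$ evaluated at moving arguments need not be a function of the limit arguments. If it fails, I would bypass Lemma \ref{lea1} and prove it directly from Berbee's coupling lemma (Doukhan (1994), Section 1.2), as the text before Lemma \ref{lea2} already suggests.
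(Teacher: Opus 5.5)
Your counterexample is correct, so Lemma \ref{lea4} is false as stated and the paper's proof of it cannot be right. The checks go through. The graph of the Borel map $f_n$ is a Borel set of full $\PROB_{(X,Y_n)}$-measure, so (\ref{lea4eq1}) gives $\bar Y_n=f_n(\bar X)$ a.s. Then (\ref{lea4eq3}) would require $f_n(x)$ to converge for Lebesgue-a.e. $x$. That is impossible, since $|f_n|=1$ a.e. while $f_n\to 0$ weakly.

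The paper's argument breaks at the sentence claiming that $\bar X_n=\lim_k\bar X_n^{(k)}$ does not depend on $n$. Consider the set $\bigcup_{j_1,\dots,j_k}\Delta^{(n)}_{i_1,\dots,i_k,j_1,\dots,j_k}$ on which $\bar X_n^{(k)}=x_{i_1\dots i_k}$. Only its Lebesgue measure $\PROB\{X\in S_{i_1,\dots,i_k}\}$ is independent of $n$, not the set itself. Because the intervals are nested, for $k\geq 2$ this set is made of subintervals of the level-one intervals $\Delta^{(n)}_{i_1,j_1}$. Their lengths $\PROB\{X\in S_{i_1},Y_n\in\tilde S_{j_1}\}$ and their inner subdivisions move with $n$. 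Equal measure does not give equal sets. Your example shows that no rearrangement of Skorokhod's intervals can repair this, because weak convergence of the joint laws gives no pointwise control of the conditional laws of $Y_n$ given $X$.

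On your repair of Lemma \ref{lea1}, your first bullet, which says (\ref{lea1eq3}) ``still follows'', contradicts your own main-obstacle paragraph. The identity $\bar Y^*_{n_k}=f_{n_k}(\cdot)$ at each finite $k$ does not pass to the limit, for the same reason as in your counterexample. What survives is (\ref{lea1eq1}), (\ref{lea1eq2}), (\ref{lea1eq4}) and (\ref{lea1eq5}). These are statements about the limit law of $(\tilde X,\tilde Y,\tilde Y^*)$, and the paper derives them by purely distributional arguments (Portmanteau, characteristic functions).

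For (\ref{lea1eq3}), do not take a kernel given $(\bar X,\bar Y,\bar U)$, since it need not be degenerate. Instead proceed as follows:
\begin{itemize}
\item take a regular conditional distribution $K$ of $\tilde Y^*$ given $(\tilde X,\tilde Y)$;
\item take a Borel map $G$ with $G(x,y,U)\sim K((x,y),\cdot)$ for $U$ uniform on $[0,1]$;
\item set $Y^*=G(X,Y,U)$ on the original space.
\end{itemize}
Since $U$ is independent of $(X,Y)$ and $(X,Y)\sim(\tilde X,\tilde Y)$, the triple $(X,Y,Y^*)$ has the law of $(\tilde X,\tilde Y,\tilde Y^*)$. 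This yields (\ref{lea1eq2}), (\ref{lea1eq4}) and (\ref{lea1eq5}), while (\ref{lea1eq1}) and (\ref{lea1eq3}) hold trivially. No Skorokhod representation, not even your weaker one, is needed. Lemma \ref{lea1}, and with it Lemma \ref{lea2}, therefore survive. Your fallback to Berbee's coupling lemma would also work.
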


\noindent
{\bf Proof.} The assertion follows from Skorokhod (1978), pp. 10-11,
if we use the construction there to define
$S_{i_1, \dots, i_k}$ and $\tilde S_{j_1, \dots, j_k}$ for
$\R^M$ and $\R^N$, resp., use then as there
\[
S_{i_1, \dots, i_k} \times \tilde S_{j_1, \dots, j_k}
\]
to construct 
$(\bar{X}_n^{(k)}, \bar{Y}_n^{(k)})$ from $(X,Y)$, and use that on
\[
\Delta^{(n)}_{i_1, \dots, i_k, j_1, \dots, j_k}
\]
$\bar{X}_n^{(k)}$ has the same value for all $j_1$, \dots, $j_k$.
Let $\lambda$ denote the Lebesgue measure.
Since we have that
\begin{eqnarray*}
\sum_{j_1, \dots, j_k} \lambda(\Delta^{(n)}_{i_1, \dots, i_k, j_1, \dots, j_k})
&
= &
\sum_{j_1, \dots, j_k}\PROB\{
X \in S_{i_1, \dots, i_k}, Y_n \in \tilde{S}_{j_1, \dots, j_k}
\}
\\
&
= &
\PROB\{
X \in S_{i_1, \dots, i_k} \}
\end{eqnarray*}
does not depend on $n$ and hence
\[
\bar{X}_n(\omega)
=
\lim_{k \rightarrow \infty} \bar{X}_n^{(k)} (\omega)
\]
does not depend on $n$ for all $\omega$, the construction in the
proof of Skorokhod (1978), pp. 10-11, gives us that the random
variables $(\bar{X}_n, \bar{Y}_n)$ constructed there with the property
\[
(\bar{X}_n, \bar{Y}_n) \rightarrow (\bar{X}, \bar{Y}) \quad a.s.
\]
satisfy
\[
\bar{X}_n = \bar{X}
\]
for all $n \in \N$. \hfill $\Box$

Now we are ready to prove Lemma \ref{lea2}.

\noindent
{\bf Proof of Lemma \ref{lea2}.}
It suffices to show that for every $l \in \{1,\dots,n\}$
there exists a probability space
$(\bar{\Omega},\bar{\A},\bar{\PROB})$ and random variables
$\bar{ Z}_0$, $\bar{ Z}_1$, \dots, $\bar{ Z}_n$,
$\bar{ Z}_1^*$, \dots, $\bar{ Z}_l^*$, $\bar{U}_1$, \dots, $\bar{U}_l$
defined on this probability space
such that
\begin{equation}
\label{plea2eq1}
\PROB_{(Z_0,Z_1, \dots, Z_n)}
=
\bar{\PROB}_{(\bar{ Z}_0, \bar{ Z}_1, \dots, \bar{ Z}_n)},
\end{equation}
\begin{equation}
\label{plea2eq2}
\bar{ Z}_0, \bar{ Z}_1^*, \dots, \bar{ Z}_l^*\quad \mbox{are } i.i.d.,
\end{equation}
\begin{equation}
\label{plea2eq3}
\bar{Z}_k^*=f_k(\bar{ U}_1, \dots, \bar{
  U}_{k-1},\bar{ Z}_0, \bar{ Z}_1, \dots, \bar{ Z}_{k-1}, \bar{
  Z}_1^*, \dots, \bar{ Z}_{k-1}^*, \bar{Z}_k,\bar{U}_k)
\end{equation}
for some Borel measurable function $f_k$ and
all $k \in \{1, \dots, l\}$,
\begin{equation}
\label{plea2eq5}
(\bar{ Z}_0, \bar{ Z}_1, \dots, \bar{ Z}_n) \mbox{ and }
(\bar{ U}_1, \dots, \bar{
  U}_{l})
\mbox{ are independent},
\end{equation}
and
\begin{eqnarray}
\label{plea2eq4}
&&
\bar{\PROB}\{ \exists k \in \{1,\dots, l\}: \bar{Z}_k \neq \bar{Z}^*_k
\}
\nonumber \\
&&
\leq
(l-1) \cdot \max_{s \in \{2, \dots, l\}}
\EXP \left\{
\sup_{C \in \C}
\left|
\PROB\{ Z_s \in C | Z_1, \dots, Z_{s-1} \} - \PROB\{Z_s \in C\}
\right| \right\}.
\end{eqnarray}

We show this by induction on $l$. The assertion trivially holds for
$l=1$ if we choose
\[
(\bar{\Omega},\bar{\A},\bar{\PROB})=({\Omega},{\A},{\PROB}),
\quad \bar{Z}_i=Z_i \quad (i=0, \dots, n)
\quad \mbox{and} \quad
\bar{Z}_1^*=Z_1.
\]
So assume next that the assertion holds for some $l \in \{1, \dots,
n-1\}$.
We extend the probability space
$(\bar{\Omega},\bar{\A},\bar{\PROB})$
such that there exists a random variable $U_{l+1}$ uniformly distributed on $[0,1]$,  
 which is independent from
$\bar{ Z}_0$, $\bar{ Z}_1$, \dots, $\bar{ Z}_n$,
$\bar{ Z}_1^*$, \dots, $\bar{ Z}_l^*$, $\bar{U}_1$, \dots,
$\bar{U}_l$.
To simplify the notation we write in the sequel $\PROB$ and
${ Z}_0$, ${ Z}_1$, \dots, ${ Z}_n$,
${ Z}_1^*$, \dots, ${ Z}_l^*$, ${U}_1$, \dots,
${U}_l$
instead of
$\bar{\PROB}$ and
$\bar{ Z}_0$, $\bar{ Z}_1$, \dots, $\bar{ Z}_n$,
$\bar{ Z}_1^*$, \dots, $\bar{ Z}_l^*$, $\bar{U}_1$, \dots,
$\bar{U}_l$. We will apply Lemma \ref{lea1} in order to show the
assertion for $l+1$. To do this, we set
\[
X =({ U}_1, \dots, {
  U}_{l},
{ Z}_0, { Z}_1, \dots, { Z}_{l}, {
  Z}_1^*, \dots, { Z}_{l}^*),
\]
\[
Y=Z_{l+1},
\]
\[
Z=(Z_{l+2}, \dots, Z_n)
\]
and
\[
U=U_{l+1}
\]
Application of Lemma \ref{lea1} yields
\[
\bar{X} =(\bar{ U}_1, \dots, \bar{
  U}_{l},\bar{ Z}_0, \bar{ Z}_1, \dots, \bar{ Z}_{l}, \bar{
  Z}_1^*, \dots, \bar{ Z}_{l}^*),
\]
\[
\bar Y=\bar Z_{l+1},
\]
\[
\bar Y^*=\bar Z_{l+1}^*,
\]
\[
\bar Z=(\bar Z_{l+2}, \dots, \bar Z_n)
\]
and
\[
\bar U=\bar U_{l+1}
\]
such that (\ref{lea1eq1})--(\ref{lea1eq4}) hold.
We show next that
$\bar{ Z}_0$, $\bar{ Z}_1$, \dots, $\bar{ Z}_n$,
$\bar{ Z}_1^*$, \dots, $\bar{ Z}_{l+1}^*$, $\bar{U}_1$, \dots, $\bar{U}_{l+1}$
satisfy (\ref{plea2eq1})--(\ref{plea2eq4}) for $l+1$ instead of $l$.

(\ref{plea2eq1}) directly follows from (\ref{lea1eq1}) and our choice
of $X$, $Y$ and $Z$. 

Because of the induction hypothesis and
(\ref{plea2eq1}),
(\ref{plea2eq2}) holds for $l$, and using (\ref{lea1eq2}) and
(\ref{lea1eq4})
we can conclude from this that 
(\ref{plea2eq2}) also holds for $l+1$.

For $k \leq l$ the induction hypothesis and  (\ref{lea1eq1}) imply
(\ref{plea2eq3}). For $k=l+1$ identity
(\ref{plea2eq3}) is a direct consequence of (\ref{lea1eq3}).

(\ref{plea2eq5}) follows from the induction hypothesis
(which together with (\ref{lea1eq1}) implies that (\ref{plea2eq5})
holds
for $l$) and $U_{l+1}$ independent from $Z_0$, \dots, $Z_n$, $U_1$,
\dots, $U_l$  (which together with (\ref{lea1eq1}) implies  
$\bar U_{l+1}$ independent from $\bar Z_0$, \dots, $\bar Z_n$, $\bar U_1$,
\dots, $\bar U_l$).

So it remains to show (\ref{plea2eq4}) for $l+1$. Because of the
induction hypothesis, (\ref{plea2eq1}) and the union bound it suffices to show
\[
\bar{\PROB}\{ \bar{Z}_{l+1} \neq \bar{Z}^*_{l+1}
\}
\leq
\EXP \left\{
\sup_{C \in \C}
\left|
\PROB\{ Z_{l+1} \in C | Z_1, \dots, Z_{l} \} - \PROB\{Z_{l+1} \in C\}
\right| \right\}.
\]
(\ref{lea1eq5}) implies
\begin{eqnarray*}
&&
\bar{\PROB}\{ \bar{Z}_{l+1} \neq \bar{Z}^*_{l+1}\}
=
\bar{\PROB}\{ \bar{Y} \neq \bar{Y}^*\}
\leq
\EXP
\left\{
 \sup_{C \in \C}
\left|
\PROB\{  Y \in C | X \}
-
\PROB\{ Y \in C \}
\right|
\right\}
\\
&&
=\EXP
\left\{
\sup_{C \in \C}
\left|
\PROB\{  Z_{l+1} \in C | { U}_1, \dots, {
  U}_{l}, { Z}_0, { Z}_1, \dots, { Z}_{l}, {
  Z}_1^*, \dots, { Z}_{l}^*\}
-
\PROB\{ Z_{l+1} \in C \}
\right|
\right\}
\end{eqnarray*}
By (\ref{plea2eq3}) we get that the $\sigma$ field generated by
${ U}_1, \dots, {
  U}_{l}, { Z}_0, { Z}_1, \dots, { Z}_{l}, {
  Z}_1^*, \dots, { Z}_{l}^*$ is the same as the $\sigma$-field generated by 
$  { U}_1, \dots, {
  U}_{l}, { Z}_0, { Z}_1, \dots, { Z}_{l}$, hence
\begin{align*}
&\PROB\{  Z_{l+1} \in C | { U}_1, \dots, {
  U}_{l}, { Z}_0, { Z}_1, \dots, { Z}_{l}, {
  Z}_1^*, \dots, { Z}_{l}^*\}\\
&=
\PROB\{ Z_{l+1} \in C |   { U}_1, \dots, {
  U}_{l}, { Z}_0, { Z}_1, \dots, { Z}_{l}\}.
\end{align*}
Because of $( { U}_1, \dots, {
  U}_{l})$ independent from $(Z_{l+1}, { Z}_0, { Z}_1, \dots, {
  Z}_{l})$
the last conditional probability is equal to
\[
\PROB\{ Z_{l+1} \in C | { Z}_0, { Z}_1, \dots, { Z}_{l}\},
\]
from which we can conclude
\begin{eqnarray*}
&&
\bar{\PROB}\{ \bar{Z}_{l+1} \neq \bar{Z}^*_{l+1}\}
\\
&&
\leq \EXP
\left\{
\sup_{C \in \C}
\left|
\PROB\{  Z_{l+1} \in C |  { U}_1, \dots, {
  U}_{l}, { Z}_0, { Z}_1, \dots, { Z}_{l}, {
  Z}_1^*, \dots, { Z}_{l}^*\}
-
\PROB\{ Z_{l+1} \in C \}
\right|
\right\}
\\
&&
=
\EXP
\left\{
\sup_{C \in \C}
\left|
\PROB\{  Z_{l+1} \in C | { Z}_0, { Z}_1, \dots, { Z}_{l}
  \}
-
\PROB\{ Z_{l+1} \in C \}
\right|
\right\},
\end{eqnarray*}
which proves (\ref{plea2eq4}).
\hfill $\Box$

\end{appendix}

\end{document}